\def\ignore#1{}
\def\bR{{\mathbb R}}
\def\R{{\mathbb R}}
\def\bN{\mathbb N}
\def\bQ{\mathbb Q}
\def\bP{{\mathbb P}}
\def\bE{{\mathbb E}}
\def\E{{\mathbb E}}
\definecolor{DarkGreen}{rgb}{0.2,0.6,0.2}
\newcommand{\se}{ \subseteq}
\newcommand{\ee}{ \varepsilon }
\def\fp#1{\{#1\}}
\def\eps{\varepsilon}
\def\<{\langle}
\def\>{\rangle}
 \def\ua{\uparrow}
\newcommand{\kk}{ \kappa }
\newcommand{\bY}{ Y }
\newcommand{\bB}{ B }
\newcommand{\bW}{ W }
\newcommand{\bX}{ X }
\newcommand{\bZ}{ {\mathbb{Z}} }
\newcommand{\n}[1]{\left\lVert#1\right\rVert}
\newcommand{\bn}[1]{\bigg\lVert#1\bigg\rVert}
\newcommand{\nn}[1]{\lVert#1\rVert}
\newcommand{\lr}[2]{\left\langle#1,#2\right\rangle}
\newcommand{\supp}{ {\text{supp }} }
\newcommand{\card}{ {\#} }
\newcommand{\bone}{ {\mathbbm{1}} }
\newcommand{\var}[1]{\text{Var}\left(#1\right)}
\def\ignore#1{}
\def\<{\langle}\def\>{\rangle}
\newtheorem{theorem}{Theorem}[section]
\newtheorem{corollary}[theorem]{Corollary}
\newtheorem{lemma}[theorem]{Lemma}
\theoremstyle{definition}
\newtheorem{definition}[theorem]{Definition} 
\newtheorem{remark}[theorem]{Remark}
\begin{document}
\title{\Large\bf Sample Path Properties of the Fractional Wiener--Weierstrass Bridge}
\author{\normalsize Alexander Schied\thanks{Department of Statistics and Actuarial Science, University of Waterloo. E-mail: {\tt aschied@uwaterloo.ca}}
	 \and\setcounter{footnote}{6}
\normalsize	Zhenyuan Zhang\thanks{
		Department of Mathematics, Stanford University. E-mail: {\tt zzy@stanford.edu}
	 		\hfill\break The authors gratefully acknowledge financial support  from the
 Natural Sciences and Engineering Research Council of Canada through grants RGPIN-2017-04054 and RGPIN-2024-03761.}}
        
\maketitle

\vspace{-0.5cm}

\begin{abstract} 
Fractional Wiener--Weierstrass  bridges are a class of Gaussian processes that arise from replacing the trigonometric function in the construction of classical Weierstrass  functions by a fractional Brownian bridge. 
We investigate the sample path properties of such processes, including local and uniform moduli of continuity, $\Phi$-variation, Hausdorff dimension, and location of the maximum. Our analysis relies heavily on upper and lower bounds of fractional integrals, where we establish a novel improvement of the classical Hardy--Littlewood inequality for fractional integrals of a special class of step functions.\end{abstract}
 
\noindent{\it Keywords:} Fractional Wiener--Weierstrass  bridge, moduli of continuity, $\Phi$-variation, Hausdorff dimension,  Hardy--Littlewood inequality for fractional integrals 
\medskip

\noindent{\it MSC 2020 classification:} 60G22, 60G15, 60G17, 28A80, 26A33

\maketitle


\section{Introduction}

In recent years, interest in non-diffusive stochastic models---those with sample paths either rougher or smoother than standard Brownian motion---has grown significantly. This trend is driven by several factors. First, advances in Lyons' rough path theory \cite{friz2020course}, modeling with fractional Brownian motion \cite{KubiliusMishura}, and pathwise It\^o calculus for trajectories with $p$-th
variation for 
$p>2$ \cite{ContPerkowski} have all spurred further research. Second, new applications of non-diffusive processes, such as rough volatility modeling \cite{RoughVolatilityBook}, have underscored their practical relevance.

The phenomenon of roughness has also played a significant role in fractal geometry. Consider, for instance, the classical Weierstrass  function, which, for 
$\alpha\in(0,1)$ and $b\in\{2,3,\dots\}$, is defined as 
\begin{align}
    \phi_{\alpha,b}(t)=\sum_{n=0}^\infty \alpha^n\cos(2\pi b^nt),\qquad t\in[0,1].\label{eq:w}
\end{align}
Recent analyses have studied this function in a manner analogous to the sample paths of stochastic processes. 
Among the properties explored are the local and uniform moduli of continuity \cite{de2019central}, the existence of a local time \cite{ImkellerWeierstrass}, the Hausdorff dimension of its graph \cite{keller2017simpler}, and its suitability as an integrator for pathwise Itô calculus \cite{SchiedZZhang}.

In this paper, we continue the analysis of a new class of stochastic processes introduced by the authors in \cite{schied2024weierstrass}, aiming to provide a synthesis between fractional Gaussian processes and fractal geometry. These processes are obtained by replacing the cosine function in \eqref{eq:w} by the trajectories of a fractional Brownian bridge $B_H$ with Hurst parameter $H$ (the precise definition of $B_H$ is given by \eqref{fbb} below). More precisely, the \emph{fractional Wiener--Weierstrass  bridge} with parameters $\alpha\in(0,1)$, $b\in\{2,3,\dots\}$, and $H\in(0,1)$ is defined as the stochastic process
 $$Y(t):=
\sum_{n=0}^\infty \alpha^nB_H(\fp{b^n t}),\qquad 0\le t\le 1,
 $$
where $\fp{x}$ is the fractional part of $x\ge0$. Although $Y$ remains a Gaussian process, it displays a number of intriguing properties. For instance, it was shown in \cite{schied2024weierstrass} that there is no combination of parameter values $(\alpha,b,H)$ for which $Y$ is a semimartingale. It was shown moreover that the covariance function $c(s,t):=\bE[Y(s)Y(t)]$ typically has a fractal structure and, for fixed $s\in(0,1)$, can have the same roughness as the sample paths of $Y$. Here, the roughness of a  function $f:[0,1]\to\bR$  is quantified through its roughness exponent along the $b$-adic partitions, which according to \cite{HanSchiedHurst} is defined as a number $R\in[0,1]$ for which
\begin{equation}\label{roughness exponent eq}
\lim_{n\ua\infty}\sum_{k=0}^{b^n-1}\big|f((k+1)b^{-n})-f(kb^{-n})\big|^p=\begin{cases}\infty&\text{if $p<1/R$;}\\
0&\text{if $p>1/R$.}
\end{cases}
\end{equation}
The main results of \cite{schied2024weierstrass} identify the roughness exponent $R$ of the sample paths of $Y$ and compute their $p$-th variation (i.e., the limit on the left-hand side of \eqref{roughness exponent eq}) for $p = 1/R$. These results show that two distinct regimes emerge, arising from the competition between the roughness exponents of the trajectories of the fractional Brownian bridge $B_H$ and the classical Weierstrass  function in \eqref{eq:w}, which are given by $H$ and $K:=1\wedge(-\log_b\alpha)$, respectively.

In this paper, we continue our analysis of the fractional Wiener–Weierstrass  bridge, focusing on several sample path properties: (Wiener–Young) $\Phi$-variation, the local and uniform moduli of continuity, the Hausdorff dimension of the graph, and the location of the maximum. Our results highlight the significance of the distinct regimes arising from the relationship between the roughness exponents $H$ and $K$.
Specifically, for $H<K$, we observe that the fine structure of the sample paths retains characteristics similar to those of $B_H$. In contrast, when $K<H$, the behavior of the trajectories of $Y$ resembles that of a randomized version of the classical Weierstrass  function. In this case, the limits of the local and uniform moduli of continuity are governed by non-degenerate random variables, providing an example for which the well-known zero-one law for the moduli of continuity of Gaussian processes fails (see Lemma 7.1.1 of \cite{MarcusRosen} or \Cref{lemma:0-1} below).
The critical case $H=K$ presents the most intriguing and challenging regime. Here, the fine structure of the fractional Wiener–Weierstrass  bridge deviates from both the classical Weierstrass  function and the trajectories of $B_H$.

There is extensive literature on the sample path properties of Gaussian processes; see, for example, the books by Adler \cite{adler2010geometry} and Marcus and Rosen \cite{MarcusRosen}. However, most proofs in this area rely on the stationarity of increments and explicit knowledge of the covariance function. In contrast, the fractional Wiener–Weierstrass  process has highly non-stationary increments, a fractal covariance structure, and lacks self-similarity, posing significant challenges for our analysis. To handle the complex covariance structure of $Y$, we establish upper and lower bounds on $\bE[(Y(t)-Y(s))^2]$ for sufficiently small $|t-s|$. These bounds depend on a novel extension of the classical Hardy–Littlewood inequality for fractional integrals, presented in \Cref{hls}. The non-stationarity of the increments requires us to refine traditional methods for deriving sample path properties of Gaussian processes. For example, in \Cref{t}, we establish a general result on the $\Phi$-variation of Gaussian processes, extending Theorem 4 from \cite{kawada1973variation} to processes with non-stationary increments. Additionally, we employ classic techniques such as strong local non-determinism, the Sudakov minoration, and the concentration of measure.

The rest of this paper is organized as follows. In \Cref{main section}, we state our main results, beginning with \Cref{phi} on the $\Phi$-variation of the sample paths, followed by Theorem \ref{modcty} and \ref{unicty}, which address the local and uniform moduli of continuity. In \Cref{dim}, we determine the Hausdorff dimension of the graph of $Y$ as $(2-H)\vee(2-K)$, assuming that $K>2H-1$. \Cref{thm:maximum} establishes that the location of the maximum of $Y$ has an atomless distribution if and only if $H>K$.  \Cref{main section} concludes with an outlook and a list of open questions. \Cref{prelim section}
 provides auxiliary results, some of which may be of independent interest---particularly the extended Hardy--Littlewood inequality given by \Cref{hls}, which plays a central role in our proofs. Finally,  \Cref{main proofs section}  contains the proofs of our main results.

\section{Main results}
\label{main section}

Following \cite{schied2024weierstrass}, let $W_H=(W_H(t))_{t\ge0}$ be the fractional Brownian motion with Hurst parameter $H\in(0,1)$ and starting point $W_H(0)=0$. We pick a deterministic function $\kappa:[0,1]\to[0,1]$ satisfying $\kappa(0)=0$ and $\kappa(1)=1$. The stochastic process 
\begin{align}
B_H(t):=W_H(t)-\kappa(t)W_H(1),\qquad  t\in[0,1],\label{fbb}
\end{align}
can then be regarded as a fractional Brownian bridge with Hurst parameter $H$. For instance, under the choice
\begin{align}\kappa(t):=\frac{1}{2}(1+t^{2H}-(1-t)^{2H}),\label{standard kappa}
\end{align}
 the law of $B_H$ is equal to the law of $W_H$ conditioned on the event $\{W_H(1)=0\}$; see~\cite{GasbarraSottinenValkeila}. However, the specific form of $\kappa$ will not be needed in the sequel. We will only assume that $B_H$ is of the form $B_H(t)=W_H(t)-\kappa(t)W_H(1)$ for some function $\kappa:[0,1]\to[0,1]$ that satisfies $\kappa(0)=0$ and $\kappa(1)=1$ and that is H\"older continuous with some exponent $\tau\in(H,1]$. For example, the function $\kappa$ in \eqref{standard kappa} satisfies these requirements. Both $\kappa$ and $B_H$ will be fixed throughout this paper. We denote by $\fp{x}$  the fractional part of $x\ge0$.

 \begin{definition}
\label{WW}   (\cite{schied2024weierstrass}) For $\alpha\in(0,1)$ and $b\in\{2,3,\dots\}$, the stochastic process
\begin{equation}\label{ww}
Y(t)=Y_{\alpha,b,H}(t):=
\sum_{n=0}^\infty \alpha^nB_H(\fp{b^n t}),\qquad 0\le t\le 1,
\end{equation}
is called the \emph{fractional Wiener--Weierstrass  bridge} with parameters $\alpha$, $b$, and $H$.
\end{definition}

The fractional Wiener--Weierstrass  bridge is a Gaussian process with highly non-stationary increments and, unlike fractional Brownian motion,  is not self-similar. Therefore, many techniques for studying sample path properties are not available. In addition, it has continuous but nowhere-differentiable sample paths, a fractional covariance structure, and is not a semimartingale \cite{schied2024weierstrass}. 

Throughout this paper, we define 
\begin{align}
    K:=\min\Big\{1,\big({-\log_b\alpha}\big)\Big\}.\label{eq:K}
\end{align}
The key message from \cite{schied2024weierstrass} is the following.
\begin{displayquote}
\emph{The roughness of the fractional Wiener--Weierstrass  bridge $\bY=(Y(t))_{t\in[0,1]}$ is determined by a competition between the Hurst exponent $H$ of the underlying fractional Brownian bridge and the roughness exponent $K$ from the Weierstrass-type convolution.}
\end{displayquote}
In other words, the process $\bY$ often inherits sample path properties from the fractional Brownian motion $W_H$ if $H<K$, and from the Weierstrass  fractal construction if $H>K$.  Therefore, the investigation of the sample path properties of $\bY$ often bifurcates, depending on the relation between $K$ and $H$.
For instance, the \textit{roughness} of a function $f:[0,1]\to\R$ can be quantified by the $p$-th variation along the sequence of  $b$-adic partitions, defined as 
\begin{equation}\label{pth variation}
\<f\>^{(p)}_t:=\lim_{n\ua\infty}\sum_{k=0}^{\lfloor tb^n\rfloor}\big|f((k+1)b^{-n})-f(kb^{-n})\big|^p,\qquad t\in[0,1],
\end{equation}
provided the limit exists for all $t$, where $\lfloor x\rfloor$ denotes the largest integer less than or equal to $x$ (see \cite{HanSchiedHurst}). Theorem 2.3 of \cite{schied2024weierstrass} shows that the $p$-th variation of $\bY$ along the sequence of $b$-adic partitions is non-trivial for $p=\min\{1/K,1/H\}$ if $K\neq H$. We refer to Section 2 of \cite{schied2024weierstrass} for further discussions and properties of the fractional Wiener--Weierstrass  bridge.

Our results on the sample path properties of the process $\bY$ will also depend on the interplay between the parameters $H$ and $K$. The most interesting and challenging case is the critical phase $H=K$, where we fully characterize the sample path properties through delicate analysis of the covariance function. 
We start from the (Wiener--Young) $\Phi$-variation, which for a real function $f$ is defined as
\[
v_\Phi(f)=\sup\bigg\{\sum_{i=1}^n\Phi(|f(t_i)-f(t_{i-1})|):\ 0=t_0<t_1<\cdots<t_n=1,\ n\in\bN\bigg\}.
\]
Note that here the supremum is taken over \emph{all} partitions of the interval $[0,1]$ and not just over a specific refining sequence of partitions as for the $p^{\text{th}}$ variation studied in \cite{schied2024weierstrass}.
Our goal is to find a critical function $\Phi$ such that $v_\Phi(\bY)$ 
is non-trivial in the sense that $\bP(0<v_\Phi(\bY)<\infty)=1$. Here and in the sequel, $L$ (resp.~$\delta>0$) will denote a large (resp.~small) number depending only on $\alpha,b,H$ and which may vary at each occurrence. The function $\log\log x$ is always interpreted as $\log\log x\vee e$, where $a\vee b$ and $a\wedge b$ denote the respective maximum and minimum of two real numbers $a$ and $b$.

\begin{theorem}\label{phi}Let $\bY$ be a fractional Wiener--Weierstrass  bridge with parameters $\alpha$, $b$, and $H$, and $K=\min\{1,({-\log_b\alpha})\}$.
\begin{enumerate}[(i)]
\item  If $H<K$, 
\begin{align}
\bP\left(\frac{1}{L}<v_\Phi(\bY)<\infty\right)=1\quad\text{ for }\quad\Phi(x)=\bigg(\frac{x}{\sqrt{2\log\log(1/x)}}\bigg)^{1/H}.\label{p2}
\end{align}
\item  If $H=K$, 
\begin{align}
\bP\left(\frac{1}{L}<v_\Phi(\bY)<\infty\right)=1\quad\text{ for }\quad\Phi(x)=\bigg(\frac{x}{\sqrt{2\log(1/x)\log\log(1/x)/H}}\bigg)^{1/H}.\label{p1}
\end{align}
 \item  If $H>K$, 
\[
\bP(0<v_\Phi(\bY)<\infty)=1\quad\text{ for }\quad\Phi(x)=x^{1/K}.
\]
\end{enumerate}
Moreover, in all three cases, if $\Theta:[0,\infty)\to[0,\infty)$ is a function such that  $\Phi(x)=o(\Theta(x))$ as $x\to 0^+$, then $v_{\Theta}(\bY)=\infty$ almost surely. Conversely, if $\Theta(x)=o(\Phi(x))$ as $x\to 0^+$, then $v_\Theta(Y)=0$ almost surely.
\end{theorem}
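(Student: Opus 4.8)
\textbf{Proof strategy for \Cref{phi}.} The plan is to reduce the $\Phi$-variation statement to two-sided control of the modulus of continuity of $\bY$, which in turn rests on the covariance bounds for $\bE[(Y(t)-Y(s))^2]$ alluded to in the introduction and proved via the extended Hardy--Littlewood inequality (\Cref{hls}). The key observation is that for a Gaussian process whose incremental variance $\sigma^2(s,t):=\bE[(Y(t)-Y(s))^2]$ is comparable, up to constants, to $|t-s|^{2H}$ (in the regimes $H\le K$) or to a quantity of order $|t-s|^{2K}$ (in the regime $H>K$), the $\Phi$-variation with the indicated $\Phi$ should be finite and positive almost surely, with $\Phi$ essentially the inverse of the local modulus of continuity. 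So the first step is to record the precise two-sided bounds: there exist $\delta>0$ and constants such that for $|t-s|<\delta$,
\[
\frac{1}{L}\,\rho(|t-s|)\le \sigma^2(s,t)\le L\,\rho(|t-s|),
\]
where $\rho(h)=h^{2H}$ when $H<K$, $\rho(h)=h^{2H}\log(1/h)$ when $H=K$, and $\rho(h)\asymp h^{2K}$ when $H>K$ (the logarithmic correction at criticality being exactly what produces the extra $\sqrt{\log(1/x)}$ factor inside $\Phi$ in \eqref{p1}). These are the covariance estimates the introduction promises; I would invoke them as the analytic input.

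\textbf{Upper bound ($v_\Phi(\bY)<\infty$ a.s.).} For the finiteness, the strategy is a chaining/Dudley-type argument combined with the Borel--Cantelli lemma. First one shows a uniform modulus of continuity: almost surely there is a (random) $h_0>0$ such that $|Y(t)-Y(s)|\le C\,\psi(|t-s|)$ for all $|t-s|<h_0$, where $\psi$ is the inverse function of $\Phi$ up to constants --- e.g. $\psi(h)=h^H\sqrt{2\log\log(1/h)}$ for $H<K$. This follows from the Gaussian maximal inequality applied to the increments over $b$-adic-type partitions together with the upper covariance bound and a summable Borel--Cantelli estimate; for $H>K$ one instead uses the self-affine structure (the $n=0$ term dominates at scale $b^{-n}$) as in \cite{schied2024weierstrass}. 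Given such a modulus, one bounds $v_\Phi(\bY)$ by splitting any partition into "short" links (mesh $<h_0$), on which $\Phi(|Y(t_i)-Y(t_{i-1})|)\le \Phi(C\psi(|t_i-t_{i-1}|))\le L|t_i-t_{i-1}|$ so the contributions telescope to $\le L$, and finitely many "long" links contributing a bounded amount. One must check that $\Phi\circ(C\psi)$ is indeed (up to a constant) the identity near $0$, which is where the precise form of $\Phi$ in \eqref{p2}--\eqref{p1} is forced.

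\textbf{Lower bound ($v_\Phi(\bY)>1/L$, and $0<v_\Phi$ a.s.).} For the lower bound one exhibits, along the $b$-adic partitions, a nontrivial contribution. In the regimes $H\le K$ the relevant input is the \emph{local} modulus of continuity / law of the iterated logarithm type lower bound together with strong local non-determinism: using conditional variance lower bounds one shows that along a suitable subsequence of scales many increments $|Y((k+1)b^{-n})-Y(kb^{-n})|$ exceed $\frac1C\psi(b^{-n})$, and summing $\Phi$ of these over $k$ gives a quantity bounded below. The non-stationarity forces one to argue scale-by-scale with conditioning rather than invoking stationary increments directly; this is where \Cref{t} (the general $\Phi$-variation result for Gaussian processes with non-stationary increments, extending \cite{kawada1973variation}) does the bookkeeping. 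For $H>K$, where $\Phi(x)=x^{1/K}$ is a pure power, the statement $v_\Phi(\bY)>0$ is immediate from the known nontriviality of the $p$-th variation along $b$-adic partitions for $p=1/K$ proved in \cite{schied2024weierstrass} (Theorem 2.3 there), since $v_\Phi\ge \<Y\>^{(1/K)}_1>0$; for this regime a deterministic constant lower bound $1/L$ is not claimed, only positivity, because the $p$-th variation limit is a genuine random variable.

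\textbf{The sharpness dichotomy.} Finally, for the last sentence: if $\Phi(x)=o(\Theta(x))$, then along the same partitions realizing a positive lower bound for $v_\Phi$ one has $\sum \Theta(|Y(t_i)-Y(t_{i-1})|)=\sum \frac{\Theta}{\Phi}(|Y(t_i)-Y(t_{i-1})|)\,\Phi(|Y(t_i)-Y(t_{i-1})|)$; since the increments entering that lower bound tend to $0$ while their number grows, the ratio $\Theta/\Phi$ along them blows up, forcing $v_\Theta(\bY)=\infty$. Symmetrically, if $\Theta(x)=o(\Phi(x))$, then the short-link estimate above gives $\sum\Theta(|Y(t_i)-Y(t_{i-1})|)\le \sum \frac{\Theta}{\Phi}(C\psi(|t_i-t_{i-1}|))\cdot L|t_i-t_{i-1}|\to 0$ by dominated convergence as the mesh shrinks, while finitely many long links can be made negligible by taking finer partitions; hence $v_\Theta(\bY)=0$. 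A minor technical point is that $v_\Theta$ is not in general monotone in the mesh, so one argues on the value of the sup directly rather than on a sequence.

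\textbf{Main obstacle.} I expect the critical case $H=K$ to be the crux: the covariance is no longer a clean power law, and obtaining the matching constant $\sqrt{2}$ (with the correct $1/H$ inside) in both the upper LIL-type modulus and the lower local modulus requires the sharp form of the extended Hardy--Littlewood inequality \Cref{hls} rather than soft bounds --- soft constants would only give $v_\Phi$ finite and positive for $\Phi$ differing from the stated one by a constant factor inside, which is not enough to pin down the $o(\cdot)$ dichotomy at the exact threshold. Getting the deterministic (non-random) two-sided bound $1/L<v_\Phi(\bY)<\infty$ rather than merely an almost sure finite random value, in the two regimes $H\le K$, also relies on the $\log\log$ normalization being exactly right so that the Borel--Cantelli series converges/diverges on the correct side; tuning this is the delicate part.
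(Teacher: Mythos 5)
Your overall architecture matches the paper's: case (iii) is handled exactly as you say (H\"older continuity from \cite{schied2024weierstrass} for finiteness, the non-trivial $1/K$-variation along $b$-adic partitions for positivity), and cases (i)--(ii) are reduced to two-sided bounds on $\bE[(Y(t)-Y(s))^2]$ (Lemmas \ref{MC} and \ref{MC2}) fed into a general Gaussian $\Phi$-variation theorem (\Cref{t}), with the explicit $\Phi$'s verified through the regular-variation identity $\lim_{v\to0}\Phi(c\Psi(v))/v=c^{1/H}$. However, your sketch of the general principle contains a genuine error in the upper bound. You claim a \emph{uniform} modulus of continuity of the form $|Y(t)-Y(s)|\le C|t-s|^H\sqrt{2\log\log(1/|t-s|)}$ for all sufficiently close pairs. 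This is false: by \Cref{unicty}(i) the exact uniform modulus for $H<K$ is $h^H\sqrt{\log(1/h)}$, which is achieved with a positive limiting constant, so at every scale there exist exceptional pairs whose increment exceeds any constant multiple of the $\log\log$ normalization. Consequently your ``short links'' estimate $\Phi(|Y(t_i)-Y(t_{i-1})|)\le\Phi(C\psi(\Delta_i))\le L\Delta_i$ breaks down on those links, and plugging in the true uniform modulus instead would only yield finiteness of $v_\Theta$ for a strictly larger $\Theta$, not for the stated $\Phi$. The paper's proof of \Cref{phi2} closes exactly this gap: it splits the links of a partition into those with $|\Delta_iX|\le L_7\Psi(\Delta_i)$ (handled as you propose) and the exceptional ones, and then shows via Gaussian concentration (\Cref{p}) and Borel--Cantelli that the \emph{number} of exceptional links at each scale $e^{-m}$ is at most $e^m m^{-2-2/H}$ eventually, so that their total contribution, bounded using the true uniform modulus $\widetilde\Psi(x)=\sigma(x)\sqrt{-\log x}$, is summable. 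This counting step is the missing idea; without it the upper bound does not follow.

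A secondary, less critical divergence: for the lower bound you propose showing that ``many increments at a fixed scale $b^{-n}$ exceed $\frac1C\psi(b^{-n})$.'' At a single scale this is quantitatively problematic, since an individual increment exceeds its standard deviation times $\sqrt{\log\log(b^n)}$ only with probability decaying in $n$, so the fraction of large increments at scale $b^{-n}$ vanishes. The paper instead uses the pointwise law-of-the-iterated-logarithm lower bound (the exact local modulus of \Cref{lg}) at \emph{every} point $t$, where the favourable scale depends on $t$ and $\omega$, combines this with Fubini, and extracts a disjoint family of intervals of total length close to $1$ by Vitali's covering theorem; the sum of $\Phi$ over these intervals is then bounded below via \eqref{psi}. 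Your instinct to defer the bookkeeping to \Cref{t} is correct, but the single-scale counting you describe would not by itself prove that theorem.
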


\Cref{phi} will follow from a more general result, Theorem \ref{t} below, which extends Theorem 10.3.2 of \cite{MarcusRosen} to the case of Gaussian processes with non-stationary increments. The concept of  $\Phi$-variation plays an important role in  rough path calculus; see, e.g., \cite{friz2020course,friz2010multidimensional}, and the references therein. 
The $\Phi$-variation of various stochastic processes has been the subject of several earlier works: fractional Brownian motion \cite{dudley2011concrete}, bi-fractional Brownian motion \cite{norvaivsa2008varphi}, sub-fractional Brownian motion \cite{qi2018law}, more general Gaussian processes with stationary increments \cite{kawada1973variation}, and  certain non-Gaussian processes \cite{basse2016phi}. 
For  fractional Brownian motion, a critical function $\Phi$ such that $v_{\Phi}(\bW_H)$ is non-trivial is given by $\Phi(x)=x^{1/H}(\log\log(1/x))^{-1/(2H)}$, which coincides with the function $\Phi$ in part (i) of \Cref{phi}. 
 \\

Our next results, Theorems \ref{modcty} and \ref{unicty}, characterize the local and uniform moduli of continuity for fractional Wiener--Weierstrass  bridges. We will write $\R_+=(0,\infty)$. Following Section 7.1 of \cite{MarcusRosen}, we will say that a function $\omega:[0,1)\to[0,\infty)$ with $\omega(0)=0$ is an \textit{exact uniform modulus of continuity} for a Gaussian process $(G(t))_{t\in[0,1]}$ if 
$$\bP\Bigg(\lim_{h\to0}\sup_{\substack{t,s\in[0,1]\\ |t-s|<h}}\frac{|G(t)-G(s)|}{\omega(|t-s|)}=C\Bigg)=1$$
for some constant $C\in\R_+$. We say that a function $\rho:[0,1)\to[0,\infty)$ with $\rho(0)=0$ is an \textit{exact local modulus of continuity} for $(G(t))_{t\in[0,1]}$ at $s\in[0,1]$ if 
$$\bP\left(\limsup_{t\in[0,1],\,t\to s}\frac{|G(t)-G(s)|}{\rho(|t-s|)}=C'\right)=1$$
for some $C'\in\R_+$.

\begin{theorem}\label{modcty}Let $\bY$ be a fractional Wiener--Weierstrass  bridge with parameters $\alpha$, $b$, and $H$, and $K=\min\{1,({-\log_b\alpha})\}$.
\begin{enumerate}[(i)]

\item If $H<K$, then $\rho(u)=u^H\sqrt{\log\log(1/u)}$ is an exact local modulus of continuity for $Y$ at every $s\in[0,1]$.
\item If $H=K$, then $\rho(u)=u^H\sqrt{\log (1/u)\log\log(1/u)}$ is an exact local modulus of continuity for $Y$ at every $s\in[0,1]$.
\item If $H>K$, then there exist non-negative random variables $Z_s,~s\in[0,1]$, such that
\begin{align}
    \bP\left(\limsup_{t\in[0,1],\,t\to s}\frac{|Y(t)-Y(s)|}{|t-s|^{K}}=Z_s\right)=1.\label{eq:Zs}
\end{align}
Moreover, if $\kappa$ is strictly increasing,\footnote{This is a technical assumption, which we expect can be removed.} the random variable $Z_s$ is non-constant, strictly positive, and unbounded for almost every $s\in[0,1]$. 
In particular, $Y$ does not have an exact local modulus of continuity at almost every $s\in[0,1]$.
\end{enumerate}

\end{theorem}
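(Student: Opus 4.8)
\textbf{Proof proposal for Theorem \ref{modcty}.}

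The plan is to treat the three regimes separately, in each case relating the increments of $\bY$ to those of the fractional Brownian bridge $B_H$ and of the Weierstrass-type remainder, and then invoking classical Gaussian machinery together with the covariance estimates that will have been established in \Cref{prelim section}. The starting point for all three parts is a sharp two-sided estimate for $\sigma^2(s,t):=\bE[(Y(t)-Y(s))^2]$ when $|t-s|$ is small. Decomposing $Y(t)-Y(s)=\sum_{n\ge0}\alpha^n\big(B_H(\fp{b^nt})-B_H(\fp{b^ns})\big)$ and splitting the sum at the scale $n\approx\log_b(1/|t-s|)$, the low-frequency terms ($b^n|t-s|$ small) contribute like $\big(\sum_{n\le N}\alpha^n b^{nH}\big)^2|t-s|^{2H}$ while the high-frequency terms contribute a fluctuation of order $\sum_{n>N}\alpha^{2n}$. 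Balancing these is precisely where the competition between $H$ and $K$ enters: for $H<K$ one gets $\sigma^2(s,t)\asymp|t-s|^{2H}$, for $H=K$ one gets $\sigma^2(s,t)\asymp|t-s|^{2H}\log(1/|t-s|)$, and for $H>K$ the dominant term is $\asymp|t-s|^{2K}$ but now with a non-trivial random prefactor because the high-frequency block does not self-average. This is where I expect to lean on the extended Hardy--Littlewood inequality \Cref{hls}; the lower bounds on $\sigma^2$ are the delicate part.

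For parts (i) and (ii), once the variance estimate is in hand the exact local modulus follows from the general theory of Gaussian processes. I would verify strong local nondeterminism of $\bY$ near $s$ — namely $\mathrm{Var}(Y(t)\mid Y(u):|u-s|\ge|t-s|)\gtrsim\sigma^2(s,t)$ — again using the frequency decomposition, since the high-frequency tail $\sum_{n>N}\alpha^nB_H(\fp{b^nt})$ is essentially independent of the coarse-scale behaviour. The upper bound in the $\limsup$ is a routine Borel--Cantelli / chaining argument over $b$-adic times using the Gaussian tail $\bP(|Y(t)-Y(s)|>\lambda\sigma(s,t))\le e^{-\lambda^2/2}$ and a continuity estimate to fill in between dyadic points; the matching lower bound uses independence of well-separated high-frequency increments to force infinitely many near-extremal oscillations. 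The constant $C'$ is then pinned down by the precise constant in $\sigma^2(s,t)\sim c\,\rho(|t-s|)^2$ with $c=1$ after absorbing the $\sqrt2$ into $\rho$; the $\sqrt{2\log\log}$ (resp.\ $\sqrt{2\log\cdot\log\log}$) in $\rho$ is exactly the Gaussian LIL normalisation, matching the known result for $W_H$ quoted after \Cref{phi}.

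Part (iii) is genuinely different and is where the main work lies. Here $Y(t)-Y(s)$ is, up to a lower-order remainder, governed by the single ``resonant'' scale, so I would write, for $t$ along a subsequence $t_m\to s$ chosen so that $\fp{b^{n}s}$ lands near a point where $B_H$ has a large increment,
\[
\frac{Y(t)-Y(s)}{|t-s|^K}=\alpha^{n}\,\frac{B_H(\fp{b^{n}t})-B_H(\fp{b^{n}s})}{|t-s|^K}+(\text{error}),
\]
and recognise the $\limsup$ as built from the local oscillation behaviour of $B_H$ around the orbit $(\fp{b^ns})_{n\ge0}$, suitably weighted by $\alpha^n b^{nK}=1$ (when $-\log_b\alpha=K<1$) or summable weights (when $K=1<-\log_b\alpha$, i.e.\ $\alpha<1/b$). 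Defining $Z_s$ as this $\limsup$, it is measurable and a.s.\ constant-free because it depends on the infinite-dimensional tail of the Gaussian sequence $(W_H(\fp{b^nt}))$; the zero--one law fails precisely because the normalisation $|t-s|^K$ is ``too small'' to wash out the randomness. To show $Z_s$ is non-constant, strictly positive, and unbounded for a.e.\ $s$, I would: (a) get strict positivity from a one-sided estimate $\liminf$ of the resonant term being bounded below by a positive random variable, using that $B_H$ has, near any point, oscillations of order $h^H$ with positive probability and these events at different scales are asymptotically independent; (b) get unboundedness from a second Borel--Cantelli argument exploiting that along a sparse subsequence of scales the increments of $B_H$ can be as large as $h^H\sqrt{\log\log}$, and $h^H/h^K\to\infty$ since $K>H$; (c) get non-constancy from (a)+(b), or more directly from a Fubini/ergodicity argument over $s$: the map $s\mapsto Z_s$ cannot be a.e.\ constant because conditioning on $Y$ near a fixed time already changes the law. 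The strict monotonicity of $\kappa$ is used to ensure $B_H$ genuinely varies at all the relevant points (the bridge correction does not accidentally cancel the oscillations). The ``in particular'' claim is immediate: a non-constant $\limsup$ precludes an exact local modulus by definition. I expect step (a)--(c), establishing the qualitative properties of $Z_s$ via multi-scale independence, to be the principal obstacle, since the orbit $(\fp{b^ns})$ and the bridge correction $\kappa(t)W_H(1)$ break the clean independence one would have for $W_H$ alone.
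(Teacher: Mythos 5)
Your treatment of parts (i) and (ii) is essentially the paper's: establish the two-sided bounds $\bE[(Y(t)-Y(s))^2]\asymp|t-s|^{2H}$ (resp.\ $\asymp|t-s|^{2H}\log(1/|t-s|)$) for small $|t-s|$ --- this is exactly Lemmas \ref{MC} and \ref{MC2}, with \Cref{hls} entering the lower bound in the critical case --- and then invoke standard Gaussian theory. Two caveats. First, you propose to verify strong local non-determinism of $\bY$ itself; the paper explicitly lists this as an open conjecture and does not need it, since Theorem 7.6.4 of \cite{MarcusRosen} delivers the exact local modulus from the quasi-helix bounds alone. Second, you claim the constant $C'$ is pinned down by an asymptotic $\sigma^2(s,t)\sim\rho(|t-s|)^2$; only two-sided bounds with an unspecified constant $L$ are available, and all that the definition of an exact local modulus requires is \emph{some} $C'\in\R_+$.

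The genuine gap is in part (iii). Your mechanism (b) for unboundedness of $Z_s$ reads ``the increments of $B_H$ can be as large as $h^H\sqrt{\log\log}$, and $h^H/h^K\to\infty$ since $K>H$'' --- but in regime (iii) we have $H>K$, so $h^{H-K}\to0$ and the comparison goes the wrong way; local-oscillation estimates for $B_H$ at scale $h$ cannot beat the normalization $h^K$. Likewise, (a) and (c) lean on an unproven ``asymptotic independence'' of oscillation events across scales, and on an ergodicity argument in $s$ that conflates non-constancy of the random variable $Z_s$ with non-constancy of the map $s\mapsto Z_s$; the fractal dependence structure makes exactly this kind of independence the hard point. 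The paper's route avoids all of it: for a.e.\ $s$ one finds scales $n_k$ along which $\{b^{n_k-m}s\}\in[0,1/3]$ for $m=0,\dots,L_{10}$, sets $X_n=\alpha^{-n}(Y(s+b^{-n})-Y(s))$, and shows $\limsup_k\bE[X_{n_k}^2]>0$ by writing the dominant block as a Wiener integral whose constant (drift) coefficient of $W_H(1)$ is bounded below by some $\delta>0$ precisely because $\kappa$ is strictly increasing, while the oscillating part is supported in $[0,5/6]$; the known strong local non-determinism of $W_H$ (not of $\bY$) then gives the variance lower bound. Since each $X_{n_k}$ is Gaussian with variance bounded away from zero, Fatou's lemma gives $\bP(Z_s>x)\ge\limsup_k\bP(|X_{n_k}|>x)>0$ for every $x\ge0$, which yields strict positivity, unboundedness, and non-constancy in one stroke, with no multi-scale independence or second Borel--Cantelli argument. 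You would need to replace your steps (a)--(c) by an argument of this kind (or actually supply the independence you invoke), and fix the reversed inequality.
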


\begin{theorem}\label{unicty}Let $\bY$ be a fractional Wiener--Weierstrass  bridge with parameters $\alpha$, $b$, and $H$, and $K=\min\{1,({-\log_b\alpha})\}$.
\begin{enumerate}[(i)]
    
\item If $H<K$, then $\omega(u)=u^H\sqrt{\log (1/u)}$ is an exact uniform modulus of continuity for $Y$.
\item If $H=K$, then $\omega(u)=u^H\log (1/u)$ is an exact uniform modulus of continuity for $Y$.
\item If $H>K$, then there exists a non-constant and unbounded random variable $Z$ such that 
\begin{align}
    \bP\Bigg(\lim_{h\to0}\sup_{\substack{t,s\in[0,1]\\ |t-s|<h}}\frac{|Y(t)-Y(s)|}{|t-s|^K}=Z\Bigg)=1.\label{eq:Z}
\end{align}
In particular, $Y$ does not have an exact uniform modulus of continuity.
\end{enumerate}

\end{theorem}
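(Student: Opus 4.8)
The plan is to treat the three regimes $H<K$, $H=K$, $H>K$ separately, with a common input being the two–sided control of $\bE\big[(Y(t)-Y(s))^{2}\big]$ for small $|t-s|$ provided by the extended Hardy--Littlewood inequality (\Cref{hls}) and the companion estimates of \Cref{prelim section}. Set $d(s,t)=\bE\big[(Y(t)-Y(s))^{2}\big]^{1/2}$ and let $\sigma(u)=u^{H}$ for $H<K$, $\sigma(u)=u^{H}\sqrt{\log(1/u)}$ for $H=K$, and $\sigma(u)=u^{K}$ for $H>K$; those estimates give $d(s,t)\asymp\sigma(|t-s|)$ uniformly in $s,t$ with $|t-s|<\delta$ when $H\le K$, while in every regime there are, at each scale $b^{-N}$, of order $b^{N}$ adjacent $b$–adic pairs whose increments have standard deviation of order $\sigma(b^{-N})$. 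Note that the uniform moduli in parts (i)--(ii) satisfy $\omega(u)\asymp\sigma(u)\sqrt{\log(1/u)}$, the typical form of a modulus produced by chaining against $b^{N}$ points; the role of scale‑uniformity (in place of stationarity) is to let the classical machinery for Gaussian moduli go through despite the fractal, non‑stationary covariance of $Y$.

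For parts (i) and (ii) I would prove matching bounds along the scales $b^{-N}$ and then invoke the zero–one law for moduli of continuity of Gaussian processes (\Cref{lemma:0-1}). For the upper bound, $d(s,t)\le L\sigma(|t-s|)$ controls the entropy of $([0,1],d)$, so Dudley's entropy bound together with the Borell--TIS inequality gives $\bP\big(\sup_{|t-s|<b^{-N}}|Y(t)-Y(s)|>C\omega(b^{-N})\big)\le e^{-cN}$; Borel--Cantelli, the monotonicity of $h\mapsto\sup_{|t-s|<h}|Y(t)-Y(s)|$, and the fact that $\omega(b^{-N})$ and $\omega(b^{-N-1})$ differ only by a bounded factor then yield $\limsup_{h\to0}\sup_{|t-s|<h}|Y(t)-Y(s)|/\omega(|t-s|)\le L'$ a.s. For the lower bound, fix $N$ and select a subfamily of order $b^{N}$ of the increments $Y((k+1)b^{-N})-Y(kb^{-N})$ with pairwise $L^{2}$–distances bounded below by a multiple of $\sigma(b^{-N})$ — this is where the lower estimate on $d$ and strong local nondeterminism of $Y$ enter — so that Sudakov minoration gives $\bE\big[\max_{k}|Y((k+1)b^{-N})-Y(kb^{-N})|\big]\gtrsim\sigma(b^{-N})\sqrt{N}\asymp\omega(b^{-N})$, and Borell--TIS makes a deviation below half this mean exponentially unlikely in $N$; Borel--Cantelli then gives $\liminf_{h\to0}\sup_{|t-s|<h}|Y(t)-Y(s)|/\omega(|t-s|)\ge c>0$ a.s. By \Cref{lemma:0-1} the limiting ratio exists and is a.s.\ a constant, which by the two bounds lies in $(0,\infty)$.

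For part (iii) the zero–one law fails. The crucial structural fact is the exact self–similarity $Y(t)=P_{N}(t)+\alpha^{N}Y(\{b^{N}t\})$ for $t\in[kb^{-N},(k+1)b^{-N}]$, where $P_{N}(t)=\sum_{n=0}^{N-1}\alpha^{n}B_{H}(\{b^{n}t\})$ has no fractional–part wraparound on a generation–$N$ interval; since $H>K$ gives $\alpha b^{H}>1$, the rescaled restrictions $\tilde g_{N}^{(k)}(x):=\alpha^{-N}\big(P_{N}(kb^{-N}+xb^{-N})-P_{N}(kb^{-N})\big)$ have oscillation on $[0,1]$ bounded uniformly in $N,k$ (by a random multiple of the convergent series $\sum_{j\ge1}\sqrt{j}\,b^{-j(H-K)}$, using the uniform modulus of $B_{H}$). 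As $N\to\infty$, $\tilde g_{N}^{(k)}$ lies within $o(1)$, uniformly in $k$, of a function $\tilde g_{\infty}^{\mathbf d}$ indexed by the base–$b$ digit sequence $\mathbf d$ of $k$, with $\mathbf d\mapsto\tilde g_{\infty}^{\mathbf d}$ continuous; moreover the centred–and–rescaled restriction of $F^{\mathbf d}:=\tilde g_{\infty}^{\mathbf d}+Y$ to any generation–$m$ $b$–adic subinterval equals exactly $\alpha^{m}F^{\mathbf e}$, where $\mathbf e$ prepends the base–$b$ digits of the subinterval to $\mathbf d$. Writing $\operatorname{osc}_{\theta}(f)=\sup\{|f(u)-f(v)|:u,v\in[0,1],\ |u-v|\le\theta\}$, the decomposition (for pairs inside one generation–$N$ interval, plus a contribution of the same order from pairs straddling endpoints) shows that $b^{NK}\sup_{|t-s|\le b^{-N}}|Y(t)-Y(s)|$ converges along $N\in\bN$ to a finite random variable $Z$ (essentially $\sup_{\mathbf d}\operatorname{osc}_{1}(F^{\mathbf d})$, up to the endpoint contribution). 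Finiteness of $Z$ follows from $Z\le\operatorname{osc}_{1}(Y)+\sup_{\mathbf d}\operatorname{osc}_{1}(\tilde g_{\infty}^{\mathbf d})$, both a.s.\ finite; positivity from the Sudakov/Borel--Cantelli argument of parts (i)--(ii) with $\omega(u)=u^{K}$; and, from the sandwich $\operatorname{osc}_{1}(Y)-R\le Z\le L(\operatorname{osc}_{1}(Y)+R)$ with $R$ a.s.\ finite, both non–constancy and unboundedness, since $\operatorname{osc}_{1}(Y)$ has unbounded support even conditionally on $\{R\le1\}$ (a large low–frequency fluctuation of $B_{H}$ costs nothing at small scales), so $Z$ falls into disjoint intervals with positive probability.

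The main obstacle, and the technical heart of part (iii), is to promote convergence along $h=b^{-N}$ to convergence of $\sup_{|t-s|<h}|Y(t)-Y(s)|/h^{K}$ through \emph{all} $h\to0^{+}$: a priori one only gets the ratio squeezed between two a.s.\ constants differing by the factor $b^{K}$, i.e.\ a possible log–periodic oscillation. Equivalently one must show $\sup_{\mathbf d}\operatorname{osc}_{\theta}(F^{\mathbf d})=\theta^{K}Z$ for \emph{every} $\theta\in(0,1]$, not just $\theta=b^{-m}$. Using the self–similarity, $\sup_{\mathbf d}\operatorname{osc}_{\theta}(F^{\mathbf d})$ at a window $\theta<b^{-1}$ reduces to $\alpha$ times the same quantity at window $\theta b$ — provided the contribution of pairs straddling a generation–one endpoint never exceeds the within–interval contribution after rescaling — and iterating this forces a functional equation whose only log–periodicity–free solution matching the value at $\theta=1$ is $\theta^{K}Z$. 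Making this rigorous requires a two–sided analysis of the boundary configurations, showing that gluing two adjacent library elements along an endpoint yields, after rescaling, nothing larger than a single library element; it is here that the full richness of the tangent family $\{F^{\mathbf d}\}$ — which samples the underlying fractional bridge at a dense set of phases — is decisive. Once $Z=\lim_{h\to0}\sup_{|t-s|<h}|Y(t)-Y(s)|/h^{K}$ is established, the absence of an exact uniform modulus is immediate from $Z$ being non–constant.
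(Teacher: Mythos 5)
Your overall architecture for parts (i)--(ii) (Sudakov minoration plus Gaussian concentration plus Borel--Cantelli, then the zero--one law of \Cref{lemma:0-1} to pin down the constant) is the same as the paper's. However, the decisive step in your lower bound --- pairwise $L^2$-separation of the family of increments, which is what Sudakov minoration actually requires --- is justified by appealing to ``strong local nondeterminism of $Y$''. That property is not available: the paper explicitly lists strong local non-determinism of $\bY$ as an open conjecture (Outlook, item 1), and the two-sided bounds $d(s,t)\asymp\sigma(|t-s|)$ of \Cref{MC} and \Cref{MC2} do not by themselves control $\n{\Delta_k-\Delta_\ell}_2$ for two distinct increments. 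This is precisely where the paper has to work: for $H<K$ it passes to the truncated fBm-based process $X^{(N_n)}$, places well-separated test intervals inside $[1/b,2/3]$ to avoid fractional-part wraparound, and invokes strong local non-determinism of the \emph{underlying} $W_H$ (Pitt's lemma) together with self-similarity of $X^{(N_n)}$ (\Cref{xnuni}); for $H=K$ it cannot even replace the bridge by $W_H$ and instead runs an induction across scales whose cross-terms are controlled by the extended Hardy--Littlewood inequality (\Cref{hls}). As written, your lower bound hides the entire technical content of the theorem behind an unproven hypothesis, so this is a genuine gap.

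For part (iii) your route diverges sharply from the paper's, and the ``main obstacle'' you identify is self-inflicted. The quantity in \eqref{eq:Z} is $\sup_{|t-s|<h}|Y(t)-Y(s)|/|t-s|^{K}$, with each pair normalized by its own $|t-s|^{K}$; this is monotone non-decreasing in $h$, so the limit as $h\to0$ exists trivially, and there is no possible log-periodic oscillation to rule out. You have silently replaced the normalization by $h^{K}$ (your $b^{NK}\sup_{|t-s|\le b^{-N}}|Y(t)-Y(s)|$), which is a different, non-monotone quantity, and then spent the bulk of the argument on a functional-equation/tangent-family analysis that you yourself concede is not rigorous (``Making this rigorous requires\dots''). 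The paper instead obtains (iii) as an immediate corollary of \Cref{modcty}(iii): the monotone limit $Z$ satisfies $Z\ge Z_s$ for every fixed $s$, hence is unbounded since $Z_s$ is; it is a.s.\ finite by pathwise $K$-H\"older continuity; and an a.s.\ finite random variable with unbounded support cannot be constant. Note also that your claim that ``positivity follows from the Sudakov/Borel--Cantelli argument of parts (i)--(ii) with $\omega(u)=u^{K}$'' is unsupported in the regime $H>K$, where a uniform lower bound $\n{Y(t)-Y(s)}_2\ge|t-s|^{K}/L$ provably fails (Outlook, item 2) and only the restricted bound of \Cref{1l} is available.
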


It is instructive to compare the moduli of continuity of $\bY$ with those of classical functions or processes. 
First, if $\alpha b>1$, the classical Weierstrass  function \eqref{eq:w} admits local moduli $\rho(u)=u^{-\log_b(\alpha)}$ at all points (Theorem 1 of \cite{izumi1965theoremes}), and hence the same uniform modulus.  
Second, the fractional Brownian motion $W_H$ has an exact uniform modulus $\omega(u)=u^H\sqrt{\log (1/u)}$ and an exact local modulus $\rho(u)=u^H\sqrt{\log\log(1/u)}$, as special cases of Theorems 7.2.14 and 7.6.4 of \cite{MarcusRosen}. In other words, in terms of moduli of continuity, the Wiener--Weierstrass  process $\bY$ mimics the classical Weierstrass-type functions if $H>K$ and the fractional Brownian motion $W_H$ if $H<K$. Surprisingly, in the critical case $H=K$, our modulus of continuity differs from that of the critical Weierstrass  function \eqref{eq:w} with $\alpha b=1$. More precisely, \cite{gamkrelidze1984modulus} established that $\rho(u)=u^H\sqrt{\log (1/u)\log\log\log(1/u)}$ is an exact local modulus of continuity for $\phi_{1/b,b}$ at almost all points. More general results were later established in \cite{de2019central} for (critical) Weierstrass-type functions.

\begin{remark} \label{zer-one remark}
    The fact that $Z_s$ in \eqref{eq:Zs} (or $Z$ in \eqref{eq:Z}) is random does not contradict the well-known zero-one law on the modulus of continuity of Gaussian processes (see Lemma \ref{lemma:0-1} below). In fact, it is an interesting example where the zero-one law fails. This is because for $H>K$ the modulus of continuity is \textit{not} of a larger order than the $L^2$-distance $\nn{Y(t)-Y(s)}_2$  as $|t-s|\to 0$.
\end{remark} 

Denote by $\mathrm{dim}(f)$ the Hausdorff dimension of {the graph} of a function $f$. 
For the Weierstrass  function in \eqref{eq:w} (or Weierstrass-type functions in general), it was a long-standing conjecture that $\mathrm{dim}(\phi_{\alpha,b})=\max\{1,2-K\}$, until recently resolved in \cite{ren2021dichotomy,shen2018hausdorff}. On the other hand, for the fractional Brownian motion we have $\mathrm{dim}(\bW_H)=2-H$ a.s. (Theorem 1 of \cite{adler1977hausdorff}). The question extends naturally to the fractional Wiener--Weierstrass  bridge $\bY$. According to the previous heuristic, it is expected that $\mathrm{dim}(\bY)=\max\{2-H,2-K\}$ holds a.s. We give a partial answer in the next result.

\begin{theorem}\label{dim}
Let $\bY$ be a fractional Wiener--Weierstrass  bridge with parameters $\alpha$, $b$, and $H$, and $K=\min\{1,({-\log_b\alpha})\}$. Suppose that $K>2H-1$. Then $\mathrm{dim}(\bY)=\max\{2-H,2-K\}$ almost surely.
\end{theorem}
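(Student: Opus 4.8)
The plan is to prove matching upper and lower bounds for $\dim(\bY)$, both of which will rely crucially on the two-sided estimates on $\bE[(Y(t)-Y(s))^2]$ that follow from the extended Hardy--Littlewood inequality (\Cref{hls}), together with the identification of the roughness exponent $\min\{H,K\}$ from \cite{schied2024weierstrass}. For the \emph{upper bound} $\dim(\bY)\le\max\{2-H,2-K\}=2-\min\{H,K\}$, I would use a covering argument: fix $\gamma<\min\{H,K\}$ arbitrary and use the uniform modulus of continuity results (\Cref{unicty}, or more simply an upper bound on $\|Y(t)-Y(s)\|_2$ plus a standard Gaussian chaining / Kolmogorov-type argument) to show that, almost surely, $Y$ is $\gamma$-H\"older continuous on $[0,1]$. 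A $\gamma$-H\"older function on an interval has graph of Hausdorff dimension at most $2-\gamma$ (cover $[0,1]$ by $N$ intervals of length $1/N$; over each, the graph lies in a box of height $\lesssim N^{-\gamma}$, so is covered by $\lesssim N^{1-\gamma}$ squares of side $1/N$; summing $N\cdot N^{1-\gamma}\cdot N^{-(2-\gamma)}\to$ bounded). Letting $\gamma\ua\min\{H,K\}$ gives the upper bound.

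For the \emph{lower bound} $\dim(\bY)\ge 2-\min\{H,K\}$, the natural route is the potential-theoretic method of Frostman: it suffices to exhibit, for every $\gamma<2-\min\{H,K\}$, a (random) Borel probability measure $\mu_\gamma$ on the graph $\{(t,Y(t)):t\in[0,1]\}$ with finite $\gamma$-energy, almost surely or at least with positive probability (then a $0$--$1$ argument or the fact that the dimension is a.s.\ constant upgrades this). The standard choice is the pushforward of Lebesgue measure on $[0,1]$ under $t\mapsto(t,Y(t))$; then one must bound
\[
\bE\int_0^1\!\!\int_0^1\frac{dt\,ds}{\big(|t-s|^2+|Y(t)-Y(s)|^2\big)^{\gamma/2}}<\infty.
\]
Since $Y(t)-Y(s)$ is Gaussian with variance $\sigma^2(s,t):=\bE[(Y(t)-Y(s))^2]$, the inner conditional expectation over the Gaussian is $\lesssim \int_{\bR}(|t-s|^2+x^2)^{-\gamma/2}\,\sigma(s,t)^{-1}e^{-x^2/2\sigma^2(s,t)}\,dx$, which is $\lesssim |t-s|^{-(\gamma-1)}\vee \sigma(s,t)^{-(\gamma-1)}$ up to constants (the usual Gaussian computation, valid for $\gamma>1$). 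Using the \emph{lower} bound $\sigma^2(s,t)\gtrsim |t-s|^{2\min\{H,K\}}$ — here is where I must invoke the lower estimate coming from \Cref{hls}, and where the hypothesis $K>2H-1$ enters, as that is precisely the regime in which the clean lower bound $\sigma(s,t)\gtrsim |t-s|^{\min\{H,K\}}$ (possibly up to logarithmic corrections that are harmless for the energy integral) is available — one reduces to checking $\int_0^1\!\int_0^1 |t-s|^{-(\gamma-1)\min\{H,K\}\vee(\gamma-1)}\,dt\,ds<\infty$, which holds precisely when $(\gamma-1)\min\{H,K\}<1$, i.e.\ $\gamma<1+1/\min\{H,K\}$. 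One checks $2-\min\{H,K\}\le 1+1/\min\{H,K\}$ always holds for $\min\{H,K\}\in(0,1]$, so every $\gamma<2-\min\{H,K\}$ is admissible, giving the lower bound.

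The main obstacle is the lower bound on $\sigma^2(s,t)=\bE[(Y(t)-Y(s))^2]$ for small $|t-s|$: unlike fractional Brownian motion, $Y$ has non-stationary increments and a fractal covariance, so there is no closed form, and a naive term-by-term lower bound on $\sum_n\alpha^{2n}\,\bE[(B_H(\{b^nt\})-B_H(\{b^ns\}))^2]$ loses too much because of cancellation/oscillation across scales. This is exactly what the novel extension of the Hardy--Littlewood inequality in \Cref{hls} is designed to control, applied to the step-function structure of $t\mapsto\{b^nt\}$; I would cite the resulting two-sided bound on $\sigma^2$ (which is where $K>2H-1$ is used to keep the estimate in the ``$H$-dominated'' or balanced form without degenerate logarithmic blow-up) and feed it into the energy integral. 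A secondary, more routine, point is upgrading ``finite energy with positive probability'' to an almost-sure dimension statement: since $\dim(\bY)$ is measurable with respect to the tail of the i.i.d.-like scale increments — or, more simply, since $Y$ is Gaussian and the dimension is a.s.\ equal to a constant by a zero--one law for the Gaussian measure — the positive-probability conclusion suffices, and one records this with a brief remark.
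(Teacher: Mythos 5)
Your upper bound and your treatment of the case $H\le K$ are essentially the paper's argument: H\"older continuity gives $\dim(\bY)\le 2-\min\{H,K\}$, and the uniform two-sided variance bounds of \Cref{MC} and \Cref{MC2} feed into a Frostman/K\^ono-type lower bound with the pushforward of Lebesgue measure (the paper invokes Theorem 2 of \cite{kono1986hausdorff} on each subinterval of length $b^{-L}$ plus countable stability, which is the same idea packaged).

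The genuine gap is in the case $H>K$. Your plan hinges on the uniform lower bound $\sigma(s,t)\gtrsim|t-s|^{K}$ for all $s,t\in[0,1]$, and you attribute the hypothesis $K>2H-1$ to making this ``clean lower bound'' available. But that uniform bound is \emph{false} for every parameter choice with $H>K$: as the paper points out in its list of open questions, it would contradict Lemma 7.1.10 of \cite{MarcusRosen} combined with \Cref{modcty}(iii) (the local modulus at scale $|t-s|^K$ has a genuinely random, unbounded limsup, which is incompatible with a deterministic two-sided $L^2$ bound of that order). Consequently the energy integral for the pushforward of Lebesgue measure cannot be controlled this way. The paper's actual route is to restrict to the sets $T_N$ of \eqref{eq:TN} (points whose $b$-adic orbits stay in $[b^{-N},1-b^{-N}]$), on which Lemma \ref{1l} \emph{does} give $\bE[|Y(t)-Y(s)|^2]\ge|t-s|^{2K}/L$ --- and it is inside the proof of that lemma, not in a global variance bound, that the hypothesis $K>2H-1$ (i.e.\ $\alpha b^{2H-1}<1$) is used. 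One then builds a Cantor-like subset $S_N\subseteq T_N$ of dimension arbitrarily close to $1$, equips it with its natural self-similar measure $\mu_N$ satisfying $\iint|t-s|^{\eps-1}\,d\mu_N\,d\mu_N<\infty$, and pushes $\mu_N$ (not Lebesgue measure) onto the graph; letting $N\to\infty$ and $\eps\to0$ yields $\dim(\bY)\ge 2-K$. Without this restriction your energy integral has no usable pointwise bound on the diagonal blocks where the variance degenerates. A secondary, fixable slip: your Gaussian estimate $\bE[(|t-s|^2+Z^2)^{-\gamma/2}]\lesssim|t-s|^{-(\gamma-1)}\vee\sigma^{-(\gamma-1)}$ is not the right form; the standard bound is $\lesssim|t-s|^{1-\gamma}\sigma^{-1}$ for $\gamma>1$, which with $\sigma\gtrsim|t-s|^{\beta}$ gives integrability exactly for $\gamma<2-\beta$, not $\gamma<1+1/\beta$.
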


A crucial ingredient in analyzing the moduli of continuity and the Hausdorff dimension is estimating the covariance of the fractional Wiener--Weierstrass  bridge $\bY$. Specifically, we need to prove upper and lower bounds for $\nn{\bY(t)-\bY(s)}_2$ for $t,s\in[0,1]$. This non-trivial task will require two technical tools: fractional integral representations and the strong local-nondeterminism property. We will provide the necessary background on fractional integrals in Section \ref{sec:fractional int}. 

The strong local non-determinism is a crucial property for establishing sample path properties of Gaussian processes, such as uniform modulus of continuity \cite{meerschaert2013fernique}, small ball probability, and Chung's law of the iterated logarithm \cite{tudor2007sample}, among many others. See \cite{xiao2006properties,xiao2007strong} for surveys. There are multiple different definitions of local non-determinism for Gaussian processes, among which \cite{berman1973local} and \cite{pitt1978local} contain the earliest versions. One of the most widely used definitions is as follows: we say a Gaussian random field $(X(t))_{t\in I}$ indexed by $I\se \bR^N$ is strongly locally $\rho$-non-deterministic if, for any $n\in\bN$ and any points $u,t_1,\dots,t_n\in  I$,
\begin{align}
\var{X(u)\mid X(t_1),\dots,X(t_n)}\geq \frac{1}{L}\min_{1\leq k\leq n}\rho(u,t_k)^2,\label{slnd}\end{align} where $\rho$ is some prefixed metric on $\bR^N$ and $L>0$ depends only on the law of the process $(X(t))_{t\in I}$. For instance, one may take $\rho(s,t)=|t-s|^{H}$ for the fractional Brownian motion (Lemma 7.1 of \cite{pitt1978local}).  This property also holds for well-known Gaussian processes such as the fractional Brownian sheet \cite{wu2007geometric} and bi-fractional Brownian motion \cite{tudor2007sample}, with certain choices of $\rho$. \\

Finally, we study the location of the maximum of a fractional Wiener--Weierstrass  bridge. Let $\tau_{\alpha,b,H}$ be the (random) location of the maximum of $\bY_{\alpha,b,H}$, taking the leftmost point when the location is not unique, that is, 
\[\tau_{\alpha,b,H}:=\min\Big\{t\in[0,1]:Y_{\alpha,b,H}(t)=\sup_{0\leq s\leq 1}Y_{\alpha,b,H}(s)\Big\}.\]
\begin{theorem}\label{thm:maximum}
Let $K=\min\{1,({-\log_b\alpha})\}$. The distribution of $\tau_{\alpha,b,H}$ is atomless if and only if $H\leq K$. Moreover, $\bP(\tau_{\alpha,b,H}=0)>0$ if $H>K$.
\end{theorem}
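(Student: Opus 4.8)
\textbf{Proof strategy for \Cref{thm:maximum}.}

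The plan is to split into the three regimes $H<K$, $H=K$, and $H>K$, treating the atomlessness claim and the atom at $0$ separately.

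First, for the case $H\le K$ I would show that $\tau_{\alpha,b,H}$ has no atoms. The natural tool is a combination of strong local non-determinism and a density/anti-concentration argument for the event $\{\tau=t_0\}$. Fix any $t_0\in[0,1]$; I want to show $\bP(\tau=t_0)=0$. The key observation is that when $H\le K$, the sample paths of $Y$ are "rough enough at every point'' — more precisely, by \Cref{modcty}(i)--(ii) the exact local modulus of continuity $\rho(u)=u^H\sqrt{\log\log(1/u)}$ (resp.\ $u^H\sqrt{\log(1/u)\log\log(1/u)}$) is the \emph{same at every} $s$, and in particular the $\limsup$ in the local modulus equals a positive constant almost surely at $t_0$. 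This forces $Y$ to oscillate around $Y(t_0)$ on arbitrarily small scales, so if $Y$ attained its global maximum exactly at $t_0$, then $Y(t)-Y(t_0)\le 0$ for all $t$ near $t_0$, contradicting the two-sided oscillation implied by the exact local modulus (the $\limsup$ of $|Y(t)-Y(t_0)|/\rho(|t-t_0|)$ being positive, together with continuity and the sign structure, would require $Y(t)-Y(t_0)$ to take both signs on every neighborhood unless $t_0$ is an endpoint, and even at endpoints one gets the corresponding one-sided contradiction). A cleaner route avoiding sign bookkeeping: use strong local non-determinism to show that the conditional law of $\sup_{|t-t_0|\le \varepsilon}(Y(t)-Y(t_0))$ given $\cF_{t_0}^c$ (the $\sigma$-field generated by $Y$ outside a neighborhood of $t_0$) is a positive random variable a.s.\ — i.e.\ conditionally on everything away from $t_0$, with probability one $Y$ exceeds its value at $t_0$ somewhere nearby — whence $\bP(\tau=t_0)=0$. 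This is the standard mechanism by which continuous Gaussian processes that are "locally non-smooth everywhere'' have atomless argmax; see the analogous arguments for Brownian motion and fractional Brownian motion.

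Second, for the case $H>K$ I must produce an atom, and \Cref{thm:maximum} asserts it sits at $t=0$. Here the heuristic is that the trajectories of $Y$ resemble a randomized classical Weierstrass function, whose "roughness'' $K$ is too tame to destroy atoms, while the fractional Brownian bridge $B_H$ contributes a genuinely smoother ($H$-Hölder with $H>K$) random tilt. I would argue as follows. Write $Y(t)=B_H(t)+\sum_{n\ge1}\alpha^n B_H(\{b^nt\})=:B_H(t)+R(t)$, where $R$ is independent of $B_H(\cdot)$? — not quite, since all summands come from the same $W_H$; instead the correct decomposition is to condition. The robust approach: show $\bP(\tau=0)>0$ by showing that on an event of positive probability, $Y(0)=0>Y(t)$ for all $t\in(0,1]$. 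Near $t=0$, $|B_H(t)|\le \|B_H\|_{\tau\text{-H\"ol}}\,t^\tau$ with $\tau>H>K$, while $\sum_{n\ge1}\alpha^n B_H(\{b^nt\})$ behaves on scale $b^{-n}$ like $\alpha^{n}$ times an order-one fluctuation of $B_H$, i.e.\ it is of order $t^{K}$ up to $\sqrt{\log}$ corrections — and crucially it is the dominant term. On the event that $B_H$ happens to be such that $B_H(u)\le -c$ for all $u$ in a fixed sub-interval of $[0,1]$ bounded away from $0$ and $1$ (positive probability), the "tail'' sum $\sum_{n\ge N}$ is uniformly negative for a suitable range, and one can further condition to push $Y(t)$ below $0=Y(0)$ on all of $(0,1]$. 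Making the small-$t$ estimate and the away-from-$0$ estimate compatible simultaneously on a single positive-probability event is exactly where the work lies; one uses the strong local non-determinism of $W_H$ to re-randomize the increments of $B_H$ on $[0,\delta]$ conditionally on its behavior on $[\delta,1]$, and a Cameron--Martin shift (or direct Gaussian correlation / Anderson's inequality) to tilt the whole path downward on $(0,1]$ while keeping $Y(0)=0$ pinned.

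Third, I need the converse direction for $H>K$, namely that when $H>K$ the distribution \emph{fails} to be atomless — but this is immediate once $\bP(\tau=0)>0$ is established, since that single atom already shows non-atomlessness. Conversely, in the regime $H\le K$ the argument of the first paragraph rules out atoms at every fixed point, including $0$ and $1$, so $\bP(\tau=0)=0$ there, completing the equivalence.

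\textbf{Main obstacle.} The hard part will be the positive-probability construction of the atom at $0$ when $H>K$: one must simultaneously control the genuinely smooth leading-order behavior of $Y$ near $0$ (governed by $B_H(t)\sim t^{\tau}$ smallness and the Weierstrass-tail of order $t^K$) and force $Y<Y(0)$ on the entire remaining interval $(0,1]$, all on one event of positive probability. This requires carefully combining (a) the Hölder control on $\kappa$ and $B_H$ near $0$, (b) a lower bound showing the Weierstrass-type tail dominates near $0$ (which itself rests on the strong local non-determinism / the covariance estimates developed in \Cref{prelim section}), and (c) a Gaussian tilting argument (Cameron--Martin or Anderson's inequality) to bend the global path below $0$. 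Keeping the $H>K$ strict inequality honest throughout — it is precisely what makes $B_H(t)=o(t^K)$ and hence what lets the atom survive — is the delicate point, and it is also why the boundary case $H=K$ must instead fall under the atomless regime.
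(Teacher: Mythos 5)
Your high-level plan matches the paper's (rule out atoms via local oscillation when $H\le K$; exhibit a positive-probability event forcing the maximum to sit at $0$ when $H>K$), but both halves have genuine gaps. For $H\le K$, knowing that $\limsup_{t\to s}|Y(t)-Y(s)|/\rho(|t-s|)$ equals a positive constant does \emph{not} rule out $s$ being a local maximum: the oscillation could a priori be entirely one-sided (downward). You flag this ``sign bookkeeping'' and propose to escape via strong local non-determinism of $Y$ conditioned on the path away from $t_0$ --- but the paper explicitly lists strong local non-determinism of $\bY$ as an \emph{open conjecture}, so that route is unavailable. What the paper actually does is apply the zero-one law (\Cref{lemma:0-1}) to the \emph{signed} ratio to obtain $\liminf_{t\to s}(Y(t)-Y(s))/\rho(|t-s|)=-C(s)$ a.s.\ with $C(s)\in\R_+$ strictly positive; by symmetry of the centered Gaussian law the corresponding $\limsup$ then equals $+C(s)>0$ a.s., so there are $t\to s$ with $Y(t)>Y(s)$ and $s$ is a.s.\ not a local maximum.

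For $H>K$ the decisive idea is missing from your sketch. A Cameron--Martin tilt of $W_H$ alone is too smooth near $0$ (it is $o(t^K)$ since $K<H$), so it cannot beat the order-$t^K$ oscillation of $Y$ at the origin; and Anderson's inequality or conditioning on $B_H$ being negative on a fixed subinterval gives no quantitative control at scale $t^K$ as $t\downarrow 0$. The paper's construction feeds the tilt through the Weierstrass convolution itself: pick $\phi$ in the Cameron--Martin space of $W_H$ with $\phi(0)=\phi(1)=0$ and $\phi>0$ on $(0,1)$, and set $f(t)=L\sum_m\alpha^m\phi(\{b^mt\})$, a deterministic fractal function satisfying $f(t)\ge L(t\wedge(1-t))^K$. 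Shifting $B_H\mapsto B_H+L\phi$ (an event of positive probability by Cameron--Martin) rewrites $Y=\widetilde Y-f$, where on a further positive-probability event $\widetilde Y$ is $K$-H\"older with constant $1/L$; hence $Y(t)\le \tfrac1L(t\wedge(1-t))^K-L(t\wedge(1-t))^K<0=Y(0)$ for all $t\in(0,1)$. Producing a \emph{deterministic downward drift of the exact order $t^K$ near $0$} is precisely the difficulty you correctly identify as ``where the work lies,'' and none of the tools you list supplies it.
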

The study of the location of the maximum of (deterministic) fractal functions has extensive literature. To mention a few, there are \cite{kahane1959exemple} for the classical Takagi function, \cite{GalkinGalkina,Galkina,MishuraSchied2} for Takagi--Landsberg functions, and \cite{han2022step} that characterizes the set of global maximizers and minimizers for functions in the Takagi class.

\paragraph{Notations.}  We use $\nn{\cdot}_2$ to denote the $L^2(\Omega)$-norm of a generic random variable and $\nn{\cdot}_{L^p}$ to denote the $L^p$-norm of a measurable function in $L^p(\R)$, where $p\geq 1$.  Denote by $\#A$ the cardinality of a finite set $A$, and $|I|$ the Lebesgue measure of a set $I\subseteq \R$.  The set of non-negative integers is denoted by $\bN_0=\bN\cup\{0\}$.

\paragraph{Outlook and some open questions.} We propose a few open questions and interesting directions for future research.
\begin{enumerate}[{\rm 1.}]

\item Several proofs (such as that of \Cref{unicty}(ii)) could be simplified, and further properties of the fractional Wiener--Weierstrass  bridges could perhaps be unveiled, if one can establish the strong local non-determinism \eqref{slnd} for the Wiener--Weierstrass  processes. However, this task appears non-trivial due to the intricate dependence structure created by the fractal construction. We conjecture that if $H<K$,  then $\bY$ is strongly locally $\rho$-non-deterministic with $\rho(s,t)=|s-t|^H$. 

\item As mentioned above, our main results rely heavily on controlling the $L^2$-distance $\nn{Y(t)-Y(s)}_2$. As we will see in Section \ref{c}, obtaining precise bounds of $\nn{Y(t)-Y(s)}_2$ in the case $H>K$ remains a challenging task. On one hand, it is not hard to show that $\nn{Y(t)-Y(s)}_2\leq L|t-s|^{K}$ uniformly in $s,t$. On the other hand, a lower bound of the form
\begin{align}
    \nn{Y(t)-Y(s)}_2\geq \frac{1}{L}|t-s|^{K}\label{eq:lb}
\end{align}
cannot hold uniformly for $s,t\in[0,1]$, because otherwise it would contradict Lemma 7.1.10 of \cite{MarcusRosen} along with Theorem \ref{modcty}(iii). This motivates the following question: for which pairs $(s,t)\in[0,1]^2$ can we have a uniform lower bound \eqref{eq:lb}? Lemma \ref{1l} may serve as a first step by asserting that if $K\in(2H-1,H)$, there exist sets $\{T_N\}_{N\geq 1}$ with Hausdorff dimensions tending to one, such that \eqref{eq:lb} holds uniformly for $t,s\in T_N$, where $L$ may depend on $N$. Extensions of such a result may lead to a better understanding of the local modulus of continuity and the Hausdorff dimension; see the point below.

\item We conjecture that for all $\alpha,b,H$, it holds that $\mathrm{dim}(\bY)=\max\{2-H,2-K\}$ almost surely. Moreover, we conjecture that the random variable $Z_s$ arising in \Cref{modcty}(iii) is non-constant and strictly positive for all $s\in[0,1]$. For this problem, the case $H\in(0,1/2]$ should follow from the Hardy--Littlewood inequality (Lemma \ref{lemma:Hardy--Littlewood inequality} below), while the more challenging case is $H\in(1/2,1)$. Both problems require further investigation of the quantity $ \nn{Y(t)-Y(s)}_2$, particularly the lower bound.  
\end{enumerate}

\section{Some preliminary estimates}
\label{prelim section}

An essential ingredient in proving the main results is estimating the covariance of the fractional Wiener--Weierstrass bridge $\bY$, or in other words, obtaining upper and lower bounds for $\nn{Y(t)-Y(s)}_2$. This will be the goal of the current section. We start with a minimal background on fractional integrals and their connections to moments of fractional Wiener integrals.

\subsection{Background on fractional integration} \label{sec:fractional int}

Let us recall from \cite{Mishura} that the Riemann--Liouville fractional integrals are defined as follows.  For $\beta>0$, 
$$I_+^\beta(f)(x):=\frac{1}{\Gamma(\beta)}\int_{-\infty}^xf(t)(x-t)^{\beta-1}dt\quad\text{ and }\quad I_-^\beta(f)(x):=\frac{1}{\Gamma(\beta)}\int^{\infty}_xf(t)(t-x)^{\beta-1}dt,$$
 $I^0_+(f)(x):=f(x)$, and for $-1<\beta<0$, 
$$I_+^\beta(f)(x):=\frac{1}{\Gamma(1+\beta)}\frac{d}{dx}\int_{-\infty}^xf(t)(x-t)^{\beta}dt\quad\text{ and }\quad I_-^\beta(f)(x):=\frac{-1}{\Gamma(1+\beta)}\frac{d}{dx}\int^{\infty}_xf(t)(t-x)^{\beta}dt.$$
For $H\in(0,1)$, let $\beta=H-1/2$ and define the linear operator
\begin{align*}
    M^H_\pm(f):=
C_HI^{\beta}_\pm(f),
\end{align*}
where $C_H>0$ is chosen as in equation (1.3.3) of \cite{Mishura}. Then,  if $f$ is supported on $[0,\infty)$ and $M^H_-(f)\in L^2(\R)$,
$$\int_0^\infty  f(s)\,dW_H(s)=\int_0^\infty M_-^H(f)(s)\,dW_{1/2}(s),$$
see Section 1.6 of \cite{Mishura}. Thus, by It\^{o}'s isometry,
\begin{align}
    \bE\bigg[\Big|\int_0^\infty  f(s)\,dW_H(s)\Big|^2\bigg]=\int_0^\infty M_-^H(f)(s)^2 ds=\n{M_-^H(f)}_{L^2}^2.\label{eq:second moment formula}
\end{align}
The following result, known as the Hardy--Littlewood inequality, 
provides useful upper and lower bounds of \eqref{eq:second moment formula}.

\begin{lemma}[Corollary 1.9.2 of \cite{Mishura}]\label{lemma:Hardy--Littlewood inequality}
  There exists $L>0$ depending on $H$ such that the following holds.  If $H\in(0,1/2]$, 
    $$ \bE\bigg[\Big|\int_0^\infty  f(s)\,dW_H(s)\Big|^2\bigg]=\n{M_-^H(f)}_{L^2}^2\geq \frac{1}{L}\n{f}_{L^{1/H}}^2.$$
    If $H\in[1/2,1)$,
    $$ \bE\bigg[\Big|\int_0^\infty  f(s)\,dW_H(s)\Big|^2\bigg]=\n{M_-^H(f)}_{L^2}^2\leq L\n{f}_{L^{1/H}}^2.$$
\end{lemma}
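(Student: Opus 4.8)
The plan is to deduce both inequalities from the classical Hardy--Littlewood theorem on fractional integration: for every $\gamma\in(0,1)$ and $1<p<\infty$ with $\frac1q=\frac1p-\gamma\in(0,1)$, the one-sided Riemann--Liouville operator $I_-^\gamma$ is bounded from $L^p(\R)$ to $L^q(\R)$ (see, e.g., \cite{Mishura}). Since $M_\pm^{1/2}$ is the identity, the case $H=1/2$ is trivial, so assume $H\ne 1/2$ and write $\beta=H-1/2$, recalling $M_-^H(f)=C_H I_-^\beta(f)$.

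\emph{Upper bound, $H\in(1/2,1)$.} Here $\beta\in(0,1/2)\subset(0,1)$. Apply the Hardy--Littlewood theorem with $p=1/H\in(1,2)$ and $\gamma=\beta$; then $\frac1q=H-(H-\tfrac12)=\tfrac12$, so $q=2$, and boundedness of $I_-^\beta\colon L^{1/H}(\R)\to L^2(\R)$ gives, for any $f$ supported on $[0,\infty)$ with $M_-^H(f)\in L^2(\R)$,
\[
\n{M_-^H(f)}_{L^2}=C_H\n{I_-^\beta(f)}_{L^2}\le L\n{f}_{L^{1/H}},
\]
which combined with \eqref{eq:second moment formula} is the asserted inequality.

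\emph{Lower bound, $H\in(0,1/2)$.} Now $\beta\in(-1/2,0)$; set $\gamma:=-\beta=\tfrac12-H\in(0,1/2)$, so that $M_-^H(f)=C_H I_-^{-\gamma}(f)$. The key point is the semigroup identity $I_-^\gamma\circ I_-^{-\gamma}=\mathrm{id}$, which holds on a dense class of admissible functions (e.g. Schwartz functions supported on $[0,\infty)$, or finite linear combinations of indicators of bounded subintervals of $[0,\infty)$, which is precisely the class needed in the applications of this lemma). For such $f$ one has $I_-^\gamma(M_-^H(f))=C_H f$, and applying the Hardy--Littlewood theorem to $I_-^\gamma\colon L^2(\R)\to L^r(\R)$ with $\frac1r=\tfrac12-\gamma=H$, i.e. $r=1/H$, yields
\[
\n{f}_{L^{1/H}}=\frac1{C_H}\n{I_-^\gamma(M_-^H(f))}_{L^{1/H}}\le \frac{L}{C_H}\n{M_-^H(f)}_{L^2},
\]
which is the claimed bound after relabeling $L$. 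One then extends to all admissible $f$ by approximation, using that $f\mapsto\int_0^\infty f\,dW_H$ is an $L^2$-isometry onto its image, so both sides are continuous in the relevant norms.

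\emph{Main obstacle.} The only delicate step is justifying the composition $I_-^\gamma\circ I_-^{-\gamma}=\mathrm{id}$ in the regime $H<1/2$, since $I_-^{-\gamma}$ is defined through a distributional derivative and the naive Fubini/semigroup manipulation requires sufficient regularity and decay of $f$; I would handle this by first proving the inequality on the convenient dense subclass (smooth compactly supported functions, or step functions), where the identity is elementary, and then passing to the limit. All of this is classical and carried out in \cite{Mishura}; it is reproduced here only for the reader's convenience.
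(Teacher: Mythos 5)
The paper does not prove this lemma at all: it is quoted verbatim as Corollary~1.9.2 of the cited monograph of Mishura, so there is no internal proof to compare against. Your reconstruction --- the upper bound for $H>1/2$ from the Hardy--Littlewood--Sobolev mapping $I_-^{H-1/2}\colon L^{1/H}\to L^2$, and the lower bound for $H<1/2$ from $I_-^{1/2-H}\colon L^2\to L^{1/H}$ combined with the inverse-operator identity $I_-^{\gamma}\circ I_-^{-\gamma}=\mathrm{id}$ on a suitable dense class --- is exactly the standard derivation carried out in that reference, and the exponent bookkeeping checks out in both cases; the one genuine subtlety (that $f$ must lie in the range $I_-^{\gamma}(L^2)$ for the composition identity to hold, which is automatic for the step functions used in this paper) is one you correctly identify and handle.
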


\subsection{A Hardy--Littlewood-type inequality for a sum of homogeneous indicator functions}

Our estimates of covariances in \Cref{c} rely on a novel refinement of Lemma \ref{lemma:Hardy--Littlewood inequality} for fractional integrals, which is of independent interest. It applies to a special case of step functions.

\begin{definition}Let $k\in\bN$. An open subset of $[0,\infty)$ is a {\it $k$-interval} if it is a union of at most $k$ bounded open intervals.
\end{definition}

The following theorem will later be applied with $b\in\{2,3,\dots\}$ as in \Cref{WW}. However, the theorem's statement is also true if $b$ is not an integer.

\begin{theorem}\label{hls}
Let $H\in(0,1),\,k\in\bN,\,\alpha\in(0,1),\,b=\alpha^{-1/H}$, and $\{I_m\}_{m\in\bN_0}$ be a collection of $k$-intervals with $|I_m|=b^m$. Define
\begin{align}
f_m:=\alpha^m\bone_{I_m} \quad \text{ and }\quad g_M:=\sum_{m=0}^{M-1}f_m.\label{gm}
\end{align}
Then there exists $L>0$, depending on $k,H,\alpha,b$, such that for all $M\in\bN$,
\begin{align}
    \frac{M}{L}\leq \n{M_-^H(g_M)}_{L^2}^2\leq LM.\label{eq:HL}
\end{align}
\end{theorem}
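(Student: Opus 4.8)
The plan is to prove the upper and lower bounds in \eqref{eq:HL} separately, exploiting the scaling structure built into the choice $b=\alpha^{-1/H}$, which makes each summand $f_m=\alpha^m\bone_{I_m}$ carry the same $L^{1/H}$-mass: indeed $\n{f_m}_{L^{1/H}}^{1/H}=\alpha^{m/H}|I_m|=\alpha^{m/H}b^m=1$, so $\n{f_m}_{L^{1/H}}=1$ for every $m$. First I would dispose of the easier direction depending on the sign of $H-1/2$. For $H\ge 1/2$, the Hardy--Littlewood inequality (Lemma \ref{lemma:Hardy--Littlewood inequality}) gives $\n{M_-^H(g_M)}_{L^2}\le L\n{g_M}_{L^{1/H}}$; since the $I_m$ are $k$-intervals and $1/H\le 2$, a direct estimate of $\n{g_M}_{L^{1/H}}$ — splitting according to how many of the at most $kM$ intervals overlap at a point, or simply using $\n{g_M}_{L^{1/H}}\le\sum_m\n{f_m}_{L^{1/H}}=M$ — yields the upper bound $\n{M_-^H(g_M)}_{L^2}^2\le LM^2$, which is too weak; so the crude triangle inequality is not enough and one needs to use near-orthogonality. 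The right approach is to expand $\n{M_-^H(g_M)}_{L^2}^2=\sum_{m,m'}\langle M_-^H(f_m),M_-^H(f_{m'})\rangle$ and show that the off-diagonal terms decay geometrically in $|m-m'|$, so that the sum is comparable to $\sum_m\n{M_-^H(f_m)}_{L^2}^2$, which in turn is $\Theta(M)$ by a single-scale estimate.

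For the single-scale estimate I would show $\frac1L\le \n{M_-^H(f_m)}_{L^2}^2\le L$ uniformly in $m$ and in the location of $I_m$. The upper bound for $H\le 1/2$ requires care since Lemma \ref{lemma:Hardy--Littlewood inequality} runs the wrong way there; but $f_m=\alpha^m\bone_{I_m}$ with $I_m$ a union of at most $k$ intervals of total length $b^m$ is explicit enough that one can bound $\n{M_-^H(f_m)}_{L^2}^2 = \n{I_-^\beta(f_m)}_{L^2}^2$ (with $\beta=H-1/2$) by direct computation or by reducing via scaling $x\mapsto b^m x$ to the case $|I_0|=1$, where a single indicator of $k$ intervals of total length $1$ has fractional integral of controlled $L^2$-norm. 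The lower bound $\n{M_-^H(f_m)}_{L^2}^2\ge 1/L$ follows for $H\le 1/2$ from Lemma \ref{lemma:Hardy--Littlewood inequality} ($\ge \frac1L\n{f_m}_{L^{1/H}}^2=\frac1L$), and for $H\ge1/2$ from a direct lower bound (the fractional integral of a nonnegative bump of unit mass cannot be too small in $L^2$), again after rescaling to unit length.

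The cross-term decay is where the main work lies, and I expect it to be the principal obstacle. One wants $|\langle M_-^H(f_m),M_-^H(f_{m'})\rangle|\le L\,c^{|m-m'|}$ for some $c<1$, uniformly in the positions of $I_m,I_{m'}$. Using the Fourier/Parseval representation of the Riemann--Liouville operator, $\widehat{M_-^H f}(\xi)=C_H|\xi|^{-\beta}e^{\pm i\,\mathrm{sgn}(\xi)\pi\beta/2}\,\hat f(\xi)$ up to constants, so $\langle M_-^H(f_m),M_-^H(f_{m'})\rangle= c_H\int |\xi|^{-2\beta}\hat f_m(\xi)\overline{\hat f_{m'}(\xi)}\,d\xi$ (interpreting $|\xi|^{1-2H}$ as the appropriate tempered distribution when $H<1/2$). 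Since $f_m$ lives on scale $b^m$, $\hat f_m$ is essentially supported on frequencies $|\xi|\lesssim b^{-m}$ with mass normalized so that $\int|\xi|^{1-2H}|\hat f_m|^2\asymp 1$; for $m'\gg m$ the function $\hat f_{m'}$ is concentrated at much higher frequencies $|\xi|\sim b^{-m'}$ where $\hat f_m$ is small and smooth, and one extracts a factor $(b^{m}/b^{m'})^{\text{something positive}}=\alpha^{(m'-m)/H\cdot c}$ from this frequency separation, using that $f_m$ is a bounded-variation function (at most $2k$ jumps) so $\hat f_m(\xi)=O(\alpha^m\cdot \min(b^m,1/|\xi|))$ pointwise. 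Carrying this out carefully — keeping track of the $k$-interval structure so the number of jumps stays bounded, and handling the singularity of $|\xi|^{1-2H}$ at the origin in the $H<1/2$ case — gives the geometric decay. Summing, $\n{M_-^H(g_M)}_{L^2}^2 = \sum_m\n{M_-^H f_m}_{L^2}^2 + \sum_{m\ne m'}\langle\cdot,\cdot\rangle$, where the diagonal is between $M/L$ and $LM$ and the off-diagonal is bounded in absolute value by $L\sum_m\sum_{j\ge1}c^j\le LM$; one still needs the off-diagonal not to cancel too much of the diagonal from below, which is automatic once the off-diagonal total is shown to be at most (say) $\tfrac12$ of the diagonal after possibly thinning — or, more robustly, one proves the lower bound by a separate argument restricted to a subsequence of scales that are far enough apart to be genuinely almost orthogonal, so that $\n{M_-^H(g_M)}_{L^2}^2\ge \n{M_-^H(\sum_{m\in S}f_m)}_{L^2}^2\ge \frac1L |S|\ge \frac{M}{L}$ for a suitable $S\subseteq\{0,\dots,M-1\}$ of size $\gtrsim M$; but this needs $g_M$ replaced by a partial sum, which is not quite what is stated, so instead I would keep the full sum and make the cross-term bound quantitative enough to beat the diagonal. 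This quantitative almost-orthogonality, uniform over all admissible interval configurations, is the crux of the argument.
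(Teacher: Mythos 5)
Your upper bound, and the lower bound in the case $H\ge 1/2$, are sound and essentially match the paper: for $H\ge1/2$ the cross terms $\lr{f_m}{f_{m'}}$ are nonnegative (positively correlated increments), so the diagonal alone already gives $\ge M/L$, while the mean-value-theorem estimate $\alpha^{m+n}\lr{\bone_{I_m}}{\bone_{I_n}}\le Lb^{-(1-H)|n-m|}$ makes the off-diagonal sum $O(M)$ from above; for $H\le1/2$ one has $\alpha^{m+n}\lr{\bone_{I_m}}{\bone_{I_n}}\le b^{-|n-m|H}$ and the same summation gives the upper bound $LM$. This is what the paper does, phrased there as an induction on $M$.

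The genuine gap is the lower bound for $H\in(0,1/2)$, which is precisely the part you defer to ``quantitative almost-orthogonality, uniform over all admissible interval configurations.'' That strategy cannot be completed as stated. For $H<1/2$ the off-diagonal terms are genuinely negative, an adjacent-scale pair can contribute a negative term of order $1$ (i.e.\ comparable to a single diagonal term), and the total $\sum_{m\ne m'}|\lr{f_m}{f_{m'}}|$ is of order $C(H,b)M$ with a constant that has no reason to be smaller than the diagonal constant --- indeed it blows up as $b\downarrow1$, which is permitted here since $b=\alpha^{-1/H}$ need not be an integer. Your fallback of passing to a sparse set $S$ of scales also fails, for the reason you half-note: $\n{M_-^H(\sum_{m\in S}f_m)}_{L^2}\le\n{M_-^H(g_M)}_{L^2}$ is false in general for $H<1/2$, because dropping nonnegative summands can increase the norm when increments are negatively correlated. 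The paper's lower bound is not an orthogonality argument at all; it peels $g_M$ by layers of its \emph{range}. Lemma \ref{l1} shows $\lr{\bone_I}{h}\ge0$ for any nonnegative step function $h$ supported in $I$; Lemma \ref{l2} then gives $\lr{h}{h}\ge \delta^2|\supp h|^{2H}/L+\lr{(h-\delta)_+}{(h-\delta)_+}$ whenever $\delta$ is below the minimal positive value of $h$; and the combinatorial Lemma \ref{l3} shows that the value set $g_M(\R)$ leaves a gap of length $\alpha^m/L$ inside each band $(\alpha^{m+1},\alpha^m)$, so that in each of the $M$ bands one can peel a layer of thickness $\gtrsim\alpha^{M-i}$ over a support of measure $\gtrsim b^{M-i}$, each such layer contributing $\gtrsim1$ by the classical Hardy--Littlewood bound applied to a constant layer. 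This positivity-plus-peeling mechanism, and in particular the gap structure of the range of $g_M$, has no counterpart in your sketch and is the actual content of the theorem.
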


Let us call a function $f_m$ as in \eqref{gm} a  \textit{homogeneous} indicator function. Theorem \ref{hls} then states that the $L^2$-norm of a fractional integral of a sum of $M$ positive {homogeneous} indicator functions is of order $M$ (up to multiplicative constants).
 We also remark here that if $H\in(0,1/2)$, the classical Hardy--Littlewood inequality (Lemma \ref{lemma:Hardy--Littlewood inequality}) only gives the lower bound
\begin{align}
    \n{M_-^H(g_M)}_{L^2}^2\geq \frac{1}{L}\n{g_M}_{L^{1/H}}^2.\label{eq:HL bad}
\end{align}
By Minkowski's inequality, the right-hand side of \eqref{eq:HL bad} further has the upper bound $\nn{g_M}_{L^{1/H}}^2\leq M^{2H}$, which for large $M$ is strictly dominated by our bound $M/L$. Hence \eqref{eq:HL bad} cannot be optimal. A similar reasoning applies to $H\in(1/2,1)$. Therefore, unless $H=1/2$, our result strengthens the classical Hardy--Littlewood inequality for functions $g_M$.

 
For the proof of \Cref{hls}, we focus first on the lower bound which appears less transparent than the upper bound. The easy part of the lower bound of \eqref{eq:HL} is when $H\geq 1/2$. Indeed, since the increments of fractional Brownian motion are positively correlated for $H\geq 1/2$, we have by \eqref{gm} and \eqref{eq:second moment formula} that
\begin{align*}
    \n{M_-^H(g_M)}_{L^2}^2&= \bE\Bigg[\bigg|\int_0^\infty  \sum_{m=0}^{M-1}f_m(s)\,dW_H(s)\bigg|^2\Bigg]\geq \sum_{m=0}^{M-1}\bE\left[\left|\int_0^\infty  f_m(s)\,dW_H(s)\right|^2\right].
\end{align*}
Next, we write the $k$-interval $I_m$ as the disjoint union of open intervals $J_1,\dots, J_k$ (some of which may be empty). At least one of these intervals say $J_1$, must have length $|I_m|/k=b^m/k$. Hence, using again the positive correlation of the increments of $W_H$,
\begin{align*}
    \bE\left[\left|\int_0^\infty  f_m(s)\,dW_H(s)\right|^2\right]&\ge \alpha^{2m}\sum_{i=1}^k|J_i|^{2H}\ge \alpha^{2m}b^{2mH}k^{-2H}=k^{-2H}.
\end{align*}
Hence, the lower bound in \eqref{eq:HL} holds with $L=k^{-2H}$.

Let us now turn to the case  $H\in(0,1/2)$.  To lighten the notation, we define the bilinear form \[\lr{f}{g}:=\bE\left[\left(\int_0^\infty  fdW_H\right)\left(\int_0^\infty  gdW_H\right)\right]\]on the space of continuous functions with compact support in $[0,\infty)$.

\begin{lemma}\label{l1}
Let $I\subset [0,\infty)$ be a finite union of bounded open intervals, and $h:[0,\infty)\to[0,\infty)$ be a nonnegative step function (with finitely many steps), vanishing on $I^c=[0,\infty)\setminus I$. Then $\lr{\bone_I}{h}\geq 0$.
\end{lemma}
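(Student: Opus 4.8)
\textbf{Proof proposal for Lemma \ref{l1}.} The plan is to exploit the spectral/Fourier representation of the inner product $\lr{\cdot}{\cdot}$ attached to fractional Brownian motion with $H\in(0,1/2)$. Recall that for such $H$ one has, for nice compactly supported $f,g$,
\[
\lr{f}{g}=c_H\int_{\bR}\widehat f(\xi)\,\overline{\widehat g(\xi)}\,|\xi|^{1-2H}\,d\xi,
\]
so that the relevant reproducing-kernel structure is governed by the nonnegative weight $|\xi|^{1-2H}$; equivalently, in the time domain, $\lr{f}{g}$ is (up to a positive constant) $-\iint f(s)g(t)\,\partial_s\partial_t|s-t|^{2H}\,ds\,dt$, whose kernel $-\partial_s\partial_t|s-t|^{2H}=2H(1-2H)|s-t|^{2H-2}$ is \emph{strictly positive} pointwise when $H<1/2$. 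This is the whole point: once the kernel is a genuine positive function of $(s,t)$, the pairing of two \emph{nonnegative} functions is automatically nonnegative. So I would first reduce to a clean integral identity. For $f=\bone_I$ (with $I$ a finite union of bounded open intervals) and $h\ge 0$ a nonnegative step function vanishing off $I$, both lie in the domain where these representations are valid (they are bounded with compact support), so
\[
\lr{\bone_I}{h}=2H(1-2H)\int_0^\infty\!\!\int_0^\infty \bone_I(s)\,h(t)\,|s-t|^{2H-2}\,ds\,dt\ \ge\ 0,
\]
since every factor in the integrand is nonnegative. That is the entire argument in one line, modulo justifying the representation.

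The key steps, in order, are: (1) record the closed form $\lr{f}{g}=C_H'\iint f(s)g(t)|s-t|^{2H-2}\,ds\,dt$ for $H\in(0,1/2)$ and compactly supported bounded $f,g$ — this follows from $M_-^H$ being, up to constants, $I_-^{H-1/2}$ and the semigroup/adjoint property of Riemann–Liouville integrals, together with \eqref{eq:second moment formula}; alternatively cite the standard fact (e.g.\ from \cite{Mishura}) that for $H<1/2$ the covariance of the fBm Wiener integral is $C_H'\iint f(s)g(t)|s-t|^{2H-2}\,ds\,dt$. (2) Check the integrability: the only singularity of $|s-t|^{2H-2}$ is on the diagonal, where $2H-2\in(-2,-1)$, and this is integrable against bounded functions on bounded sets in the two-dimensional sense because $\int\!\int_{|s-t|<\delta}|s-t|^{2H-2}\,ds\,dt<\infty$ when the exponent exceeds $-2$; since $h$ and $\bone_I$ are bounded with compact support, the double integral converges absolutely. (3) Conclude nonnegativity: $\bone_I(s)\ge0$, $h(t)\ge0$, $|s-t|^{2H-2}\ge0$, and $C_H'>0$, so the integral — and hence $\lr{\bone_I}{h}$ — is $\ge 0$.

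I do not expect a serious obstacle; the mild technical point is step (1)–(2), namely making the transition from the $L^2$-isometry formula \eqref{eq:second moment formula}, which is phrased via $M_-^H$, to the time-domain double-integral kernel, and being careful that for $H<1/2$ the operator $I_-^{H-1/2}$ has negative order so the representation $\lr{f}{g}=\n{M_-^H(f)\cdot\text{``}M_-^H(g)\text{''}}$ must be unwound to the fractional-integral-by-parts form that produces the $|s-t|^{2H-2}$ kernel rather than an $|s-t|^{2H}$ kernel with derivatives. An equivalent and perhaps cleaner route that sidesteps differentiation altogether is to use the Fourier identity $\lr{f}{g}=c_H\int_{\bR}\widehat f(\xi)\overline{\widehat g(\xi)}|\xi|^{1-2H}\,d\xi$ and then invert: $|\xi|^{1-2H}$ is (up to a positive constant) the Fourier transform of $|x|^{2H-2}$ in the sense of tempered distributions for this range of $H$, which again delivers the positive kernel; one then only needs that $\bone_I$ and $h$ are in $L^1\cap L^2$ so the manipulation is legitimate. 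Either way, the combinatorial structure of $I$ (a union of $k$ intervals) plays no role here — the lemma holds for any finite union of bounded open intervals — and the nonnegativity is ultimately just positivity of the fractional-Gaussian-noise kernel in the antipersistent regime. I would present the Fourier version as the main line and remark that the time-domain version is equivalent.
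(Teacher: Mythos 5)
There is a genuine gap, and it is fatal to the approach rather than a fixable technicality. Your argument never uses the hypothesis that $h$ vanishes off $I$, but that hypothesis is essential: for $H<1/2$ and disjoint adjacent intervals one has
\[
\lr{\bone_{(0,1)}}{\bone_{(1,2)}}=\bE\big[(W_H(1)-W_H(0))(W_H(2)-W_H(1))\big]=\tfrac12\big(2^{2H}-2\big)<0,
\]
so any proof that yields $\lr{\bone_I}{h}\ge0$ for \emph{arbitrary} nonnegative $h$ proves a false statement. The specific error is in steps (1)--(2): the time-domain representation $\lr{f}{g}=C_H'\iint f(s)g(t)|s-t|^{2H-2}\,ds\,dt$ with $C_H'>0$ and an absolutely convergent integral is the $H>1/2$ formula and is not valid for $H<1/2$. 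Your integrability check is wrong: the singularity of $|s-t|^{2H-2}$ is concentrated on the diagonal and depends only on $u=s-t$, so $\iint_{|s-t|<\delta}|s-t|^{2H-2}\,ds\,dt$ reduces to $\int_{-\delta}^{\delta}|u|^{2H-2}\,du$, which diverges precisely when $2H-2\le-1$, i.e.\ for all $H\le1/2$; the threshold is $-1$, not $-2$. When one correctly inverts the spectral identity $\lr{f}{g}=c_H\int\widehat f\,\overline{\widehat g}\,|\xi|^{1-2H}d\xi$ for $H<1/2$, the distribution with Fourier transform $|\xi|^{1-2H}$ is a Hadamard finite part, not a positive measure, and the resulting Gagliardo-type formula reads $\lr{f}{g}=c_H'\iint(f(s)-f(t))(g(s)-g(t))|s-t|^{2H-2}\,ds\,dt$ — a product of \emph{differences}, which is not pointwise nonnegative for nonnegative $f,g$. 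So the pointwise-positive-kernel argument collapses exactly in the antipersistent regime, which is the only regime where the lemma is nontrivial (for $H\ge1/2$ increments are positively correlated and the claim is immediate).

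The paper's proof is genuinely different and does use the support condition: it proceeds by induction on the number of component intervals of $I$, handling a single interval by a direct computation with the monotonicity of $x\mapsto x^{2H}$, and in the inductive step translating the rightmost interval of $I$ to close the gap to the rest, using concavity of $x\mapsto x^{2H}$ to show that $\lr{\bone_{I}}{h}$ only decreases under this translation (here it is crucial that $h$ is supported in the remaining intervals). If you want to salvage a kernel-based viewpoint, you would have to work with the difference kernel above and exploit the support hypothesis to control the cross terms — at which point you are essentially redoing the paper's concavity argument.
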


\begin{proof}
We proceed by induction on the number $n$ of disjoint open intervals in $I$. Consider first when $I$ itself is an open interval. By linearity of the expectation, we may assume $h=\bone_J$ is an indicator function of a nonempty interval $J\se I$. Let us denote  $I=(i_1,i_2)$ and $J=(j_1,j_2)$. Then, by the monotonicity of the function $x\mapsto x^{2H}$,
\begin{align*}
\lr{\bone_I}{\bone_J}=\frac{1}{2}(|j_2-i_1|^{2H}+|i_2-j_1|^{2H}-|j_1-i_1|^{2H}-|i_2-j_2|^{2H})\geq 0.
\end{align*}
Next, we assume that the claim holds for $n$ and suppose that $I$ is the disjoint union of $n+1$ nonempty bounded open intervals, denoted by $I_1,\dots,I_{n+1}$. We assume that $I_{n+1}=(i_{n+1},i_{n+2})$ is the rightmost intervals so that $\sup I=i_{n+2}$. We furthermore denote $i_n:=\sup I\setminus I_{n+1}\le i_{n+1}$. By the linearity of the expectation, we may assume without loss of generality that $h$ is the indicator function of some interval $J$ that is contained within some $I_k$. By symmetry and considering the case $k=1$, we may assume that $k\le n$.
Note then that
$g:=\bone_I=g_1+g_2,\text{ where }g_1=\bone_{(i_{n+1},i_{n+2})},\ g_2=\bone_{I\setminus I_{n+1}}$. 
We also define
\[
\widetilde{g}_1(x):=g_1(x+i_{n+1}-i_n),\ \widetilde{g}=\widetilde{g}_1+g_2.
\]
By concavity of the function $x\mapsto x^{2H}$, $\lr{g_1}{h}\geq \lr{\widetilde{g}_1}{h}$. This gives
\begin{align*}
\lr{g}{h}&=\lr{g_1}{h}+\lr{g_2}{h}\geq \lr{\widetilde{g}_1}{h}+\lr{g_2}{h}= \lr{\widetilde{g}}{h},
\end{align*}which is nonnegative by the induction hypothesis and noting that $\widetilde{g}$ is an indicator function of an $n$-interval (modulo a finite collection of points, which does not change the value of $\lr{\widetilde{g}}{h}$).
\end{proof}

\begin{lemma}\label{l2}
Let $0<H<1/2$ and $h:[0,\infty)\to[0,\infty)$ be a nonnegative step function whose support is $I$, which is a finite union of bounded open intervals. Denote by
\begin{align}h_*:=\min h(I)=\min (h([0,\infty))\setminus\{0\}).\label{h}
\end{align}Then for all $\delta\leq h_*$,
\begin{align}
\bE\left[\Big|\int_I hdW_H\Big|^2\right]\geq \frac{\delta^2|I|^{2H}}{L}+\bE\left[\Big|\int_I (h-\delta) dW_H\Big|^2\right].\label{w1}
\end{align}

\end{lemma}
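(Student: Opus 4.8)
The plan is to expand the quadratic form $\bE[|\int_I h\,dW_H|^2]$ around the perturbation $h \mapsto h - \delta\bone_I$ using the bilinear form $\lr{\cdot}{\cdot}$ introduced above. Writing $h = (h-\delta\bone_I) + \delta\bone_I$ and noting that $h - \delta\bone_I \geq 0$ is again a nonnegative step function supported on (a subset of) $I$ since $\delta \leq h_* = \min h(I)$, bilinearity gives
\[
\bE\Big[\Big|\int_I h\,dW_H\Big|^2\Big] = \bE\Big[\Big|\int_I (h-\delta)\,dW_H\Big|^2\Big] + 2\delta\,\lr{\bone_I}{h-\delta\bone_I} + \delta^2\lr{\bone_I}{\bone_I}.
\]
The last term is exactly $\delta^2\bE[|\int_I \bone_I\,dW_H|^2]$. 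So the desired inequality \eqref{w1} will follow once we show two things: first, that the cross term $\lr{\bone_I}{h - \delta\bone_I}$ is nonnegative; and second, that $\lr{\bone_I}{\bone_I} = \bE[|\int_I \bone_I\,dW_H|^2] \geq |I|^{2H}/L$ for some constant $L$ depending only on $H$ (and the number of component intervals, which is bounded here).

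The first point is immediate from \Cref{l1}: since $h - \delta\bone_I$ is a nonnegative step function vanishing on $I^c$, we have $\lr{\bone_I}{h - \delta\bone_I} \geq 0$. For the second point, I would argue as in the warm-up for the $H \geq 1/2$ case above: writing $I$ as a disjoint union of at most finitely many open intervals $J_1,\dots,J_r$, at least one of them, say $J_1$, has length $|J_1| \geq |I|/r$. One then wants $\lr{\bone_I}{\bone_I} \geq \lr{\bone_{J_1}}{\bone_{J_1}} = |J_1|^{2H} \geq (|I|/r)^{2H}$. The inequality $\lr{\bone_I}{\bone_I} \geq \lr{\bone_{J_1}}{\bone_{J_1}}$ reduces, via bilinearity, to $2\lr{\bone_{J_1}}{\bone_{I\setminus J_1}} + \lr{\bone_{I\setminus J_1}}{\bone_{I\setminus J_1}} \geq 0$; the cross term is nonnegative by \Cref{l1} again, and $\lr{\bone_{I\setminus J_1}}{\bone_{I\setminus J_1}} = \bE[|\int \bone_{I\setminus J_1}\,dW_H|^2] \geq 0$ trivially. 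This yields \eqref{w1} with $L = r^{2H}$, where $r$ is the number of component intervals of $I$ (which, in the application to \Cref{hls}, is controlled by $k$, so $L$ depends only on $k$ and $H$).

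The main obstacle is really bookkeeping rather than a deep difficulty: one must be careful that subtracting $\delta\bone_I$ keeps $h-\delta\bone_I$ a bona fide nonnegative step function supported inside $I$ (hence eligible for \Cref{l1}), and that the constant $L$ is allowed to depend on the number of intervals — here it does, and that is harmless since \Cref{hls} only ever applies this with a bounded number of intervals. A secondary subtlety is that \Cref{l1} as stated requires the perturbing function to be a step function vanishing on $I^c$; since $h$ has finitely many steps and $\delta$ is constant, $h - \delta\bone_I$ clearly satisfies this. I should also double-check that $\lr{\bone_I}{\bone_I}$ is genuinely bounded below and not merely nonnegative — this is where the "at least one component has length $\geq |I|/r$" argument is essential, since without extracting a definite sub-interval one only gets nonnegativity.
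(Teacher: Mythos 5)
Your decomposition $h=(h-\delta\bone_I)+\delta\bone_I$ and your treatment of the cross term $\lr{\bone_I}{h-\delta\bone_I}\ge 0$ via \Cref{l1} are exactly the paper's argument (the paper writes it component-wise over $I=I_1\cup\dots\cup I_n$, but it is the same computation). The gap is in your lower bound for $\lr{\bone_I}{\bone_I}$. You assert that $\lr{\bone_{J_1}}{\bone_{I\setminus J_1}}\ge 0$ ``by \Cref{l1} again,'' but \Cref{l1} does not apply here: it requires the second function to vanish \emph{outside} the set indexing the first indicator, whereas $\bone_{I\setminus J_1}$ is supported precisely outside $J_1$. Worse, the sign is in fact reversed: for $H<1/2$ the increments of $W_H$ over disjoint intervals are \emph{negatively} correlated, so $\lr{\bone_{J_1}}{\bone_{I\setminus J_1}}\le 0$ and your reduction $\lr{\bone_I}{\bone_I}\ge\lr{\bone_{J_1}}{\bone_{J_1}}$ is unjustified. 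The extract-a-long-subinterval argument is precisely the one the paper uses in the easy regime $H\ge 1/2$, where cross terms are nonnegative; it does not transfer to $H<1/2$, which is the only case Lemma~\ref{l2} addresses. The correct step, and the one the paper takes, is the classical Hardy--Littlewood inequality (\Cref{lemma:Hardy--Littlewood inequality}): $\lr{\delta\bone_I}{\delta\bone_I}=\n{M_-^H(\delta\bone_I)}_{L^2}^2\ge \frac{\delta^2}{L}\n{\bone_I}_{L^{1/H}}^2=\frac{\delta^2|I|^{2H}}{L}$, with $L$ depending only on $H$.

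A second, consequential issue: even if the monotonicity $\lr{\bone_I}{\bone_I}\ge\lr{\bone_{J_1}}{\bone_{J_1}}$ could be salvaged, your resulting constant $L=r^{2H}$ depends on the number $r$ of component intervals of $I$, and your claim that this is ``controlled by $k$'' in the application is not correct. In the proof of \Cref{hls}, Lemma~\ref{l2} is applied to the superlevel truncations $g_M^{(i,j)}$, whose supports can have on the order of $kM$ components; a constant growing with $r$ would then destroy the extraction of a contribution of size $1/L$ at each of the $M$ stages, i.e.\ the $M/L$ lower bound. The Hardy--Littlewood route yields $L=L(H)$ uniformly in the number of components, which is exactly what the downstream argument requires.
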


\begin{proof}
Write $I$ as a disjoint union of bounded open intervals $I=I_1\cup I_2\cup\cdots\cup I_n$. Denote by $h_j$ the restriction of $h$ on $I_j$, and $\delta_j=\delta\bone_{I_j},$ so by assumption $g_j:=h_j-\delta_j\geq 0$. 
Thus 
\begin{align*}
\lr{h}{h}&=\lr{\sum_{j=1}^Ng_j}{\sum_{j=1}^Ng_j}+\lr{\sum_{j=1}^N\delta_j}{\sum_{j=1}^N\delta_j}+2\sum_{i=1}^N\sum_{j=1}^N\lr{g_i}{\delta_j}.
\end{align*}
By Lemma \ref{lemma:Hardy--Littlewood inequality} and concavity of the function $x\mapsto x^{2H}$, we have
\[
\lr{\sum_{j=1}^N\delta_j}{\sum_{j=1}^N\delta_j}\geq \frac{\delta^2}{L}\sum_{j=1}^N|I_j|^{2H}\geq \frac{\delta^2|I|^{2H}}{L}.
\]
Thus we conclude by \Cref{l1} that
\[
\lr{h}{h}\geq \frac{\delta^2|I|^{2H}}{L}+\lr{\sum_{j=1}^Ng_j}{\sum_{j=1}^Ng_j}.
\]
Since $\sum_{j=1}^Ng_j=(h-\delta)\bone_I$, we obtain (\ref{w1}).
\end{proof}

We will repeatedly apply \Cref{l2} to bound $\bE\left[|\int_0^\infty g_MdW_H|^2\right]$ from below, and show that the contributions of the form $\delta^2|I|^{2H}/L$ give the desired lower bound. One must show that $h_*$ is large enough at some point in our procedure. This is formulated in the next result.

\begin{lemma}\label{l3}
Let $M\in\bN$ be arbitrary and recall (\ref{gm}). Define $S_M:=g_M(\bR)\se[0,\infty)$. Then there is $L>0$ depending only on $\alpha,k$,\footnote{Recall that each $I_m$ is a $k$-interval.} but not on $M,m$, such that for all $m<M$, $(\alpha^{m+1},\alpha^m)\setminus S_M$ contains an open interval of length $\alpha^m/L$.
\end{lemma}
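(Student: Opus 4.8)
The plan is to localise, via two elementary estimates, the handful of intervals $I_n$ that can produce a value of $g_M$ in the window $(\alpha^{m+1},\alpha^m)$, and then to deduce the gap from the way these intervals overlap, using crucially that $|I_j|=b^j$ is strictly increasing in $j$.

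Since each $I_n$ is a finite union of intervals, $g_M$ is a step function and $S_M$ is finite; as $[0,\infty)\setminus S_M$ is open, it suffices to produce a long interval in $(\alpha^{m+1},\alpha^m)$ disjoint from $S_M$. Two observations localise the problem: (a) if $g_M(x)<\alpha^m$ then $x\notin I_j$ for every $j\le m$, since otherwise $g_M(x)\ge\alpha^j\ge\alpha^m$; and (b) if moreover $g_M(x)>\alpha^{m+1}$ and $j^\ast:=\min\{j\ge m+1:x\in I_j\}$, then $g_M(x)=\sum_{j\ge j^\ast}\alpha^j\bone_{I_j}(x)\le\alpha^{j^\ast}/(1-\alpha)$, so $\alpha^{\,j^\ast-m-1}>1-\alpha$, i.e.\ $j^\ast\le m+d$ with $d=d(\alpha):=\lceil\log_\alpha(1-\alpha)\rceil\ge1$. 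Hence every $x$ with $g_M(x)\in(\alpha^{m+1},\alpha^m)$ lies in $T_m:=\bigcup_{l=1}^d I_{m+l}$, a union of at most $kd$ open intervals, and on such $x$ we have $g_M(x)=h_m(x):=\sum_{j\ge m+1}\alpha^j\bone_{I_j}(x)$; thus $S_M\cap(\alpha^{m+1},\alpha^m)$ is contained in the set of values of $h_m$ on $T_m$. If $\alpha<\tfrac12$ then $d=1$, so this value set lies in $[\alpha^{m+1},\alpha^{m+1}/(1-\alpha)]$, which is strictly to the left of $\alpha^m$, and $\big(\alpha^{m+1}/(1-\alpha),\alpha^m\big)$ already works, with $L=(1-\alpha)/(1-2\alpha)$.

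For $\alpha\ge\tfrac12$ this slack disappears and one must use that $|I_j|=b^j$ is strictly increasing ($b=\alpha^{-1/H}>1$). Restricted to any one of the $\le kd$ components $C$ of $T_m$, $h_m$ has the form $c+\sum_j\alpha^j\bone_{E_j}$ with $E_j=I_j\cap C$; since a later interval $I_j$ (with $j$ larger than the scale of $C$) can have at most one short component fitting strictly inside $C$, its long components meeting $C$ only in an end-segment, each $E_j$ is a union of at most $k$ subintervals of which at most one is a proper interior subinterval of $C$. Hence $h_m|_C$ is, up to a correction with at most $L$ jumps, a sum of a non-increasing and a non-decreasing step function with weights $\alpha^j$. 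The combinatorial–geometric fact to prove is then: letting $j^\dagger$ be the smallest $j$ for which $I_j$ meets $C$ in a proper subinterval, comparison of the value of $h_m$ just before and just after $x$ enters (resp.\ leaves) that subinterval shows that the value set of $h_m$ on $C$ misses an interval of length $\ge\alpha^{\,j^\dagger}/L$; and if $j^\dagger$ is so large that this is smaller than $\tfrac12\alpha^m(1-\alpha)$, then, as $\bone_{I_j}$ is constant on $C$ for all $j<j^\dagger$, the whole range of $h_m|_C$ is confined to an interval of length $\le\alpha^{j^\dagger}/(1-\alpha)$, whose complement in $(\alpha^{m+1},\alpha^m)$ contains an interval of length $\ge\alpha^m/L$. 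Running this over the $l\le d$ choices of $j^\ast$ and the $\le k$ components of each $I_{m+l}$, and checking that the excluded bands can be arranged to overlap in a common subinterval of $(\alpha^{m+1},\alpha^m)$, produces the required gap.

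The step I expect to be the main obstacle is exactly this last, geometric one: turning the constraint that the incidence patterns $\{j\ge m+1:x\in I_j\}$ come from an arrangement of intervals into a lower bound on the largest gap of the finite set $S_M\cap(\alpha^{m+1},\alpha^m)$ that is uniform in $M$, $m$, and the adversarial placement of the $I_n$. Two points make it delicate: handling $k>1$, where a component of $I_{m+l}$ can be cut by up to $k$ pieces of a later interval, so that $h_m|_C$ has only bounded variation rather than genuine monotonicity; and making sure the gap produced from the dominant contribution (the terms with $j^\ast=m+1$) is not filled in by the subdominant contributions with $j^\ast\in\{m+2,\dots,m+d\}$, which again must be read off from the overlap structure of the $I_n$.
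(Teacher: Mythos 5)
There is a genuine gap. Your first paragraph correctly localises the problem (values of $g_M$ in $(\alpha^{m+1},\alpha^m)$ can only arise from $x\in\bigcup_{l\ge1}I_{m+l}$ with the first index $j^\ast\le m+d$), and your $\alpha<\tfrac12$ case is fine, but for $\alpha\ge\tfrac12$ the argument stops exactly where the work is: the ``combinatorial--geometric fact'' that the value set of $h_m$ on a component $C$ misses an interval of length $\ge\alpha^{j^\dagger}/L$ is asserted, not proved, and the final step of ``checking that the excluded bands can be arranged to overlap in a common subinterval'' is precisely the uniformity-in-$M$ statement the lemma demands. The danger is real: the later intervals $I_j$ with $j>m+d$ still intersect $T_m$ and contribute to $h_m$, so $h_m$ restricted to $T_m$ can take on the order of $k(M-m)$ distinct values; any component-by-component monotonicity or bounded-variation argument that does not quantitatively control these tail contributions cannot rule out that the finitely many values spread across $(\alpha^{m+1},\alpha^m)$ with gaps shrinking as $M\to\infty$.

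The paper closes this with a different and cleaner counting idea that your proposal is missing. Write each $\tau\in S_M\cap(\alpha^{m+1},\alpha^m)$ as $\tau=\sum_{j=m}^{M-1}\eps_j\alpha^j$ and split off the prefix $(\eps_m,\dots,\eps_{m+L_1})$. The number of distinct prefixes is at most the number of value changes of the step function $g_{L_1}$, hence at most $3kL_1$ \emph{independently of $M$ and $m$}; the remaining digits contribute at most $\alpha^{m+L_1+1}/(1-\alpha)$. So the value set in the window is covered by at most $3kL_1$ intervals of total length less than $\tfrac12\alpha^m(\alpha-\alpha^2)$ once $L_1=L_1(\alpha,k)$ is chosen via \eqref{L11}, and the pigeonhole principle yields a gap of length $\alpha^m/L$. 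If you want to salvage your approach, you need some substitute for this prefix count; without it the adversarial placement of the $I_n$ defeats the geometric argument.
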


\begin{proof}
Let us choose $L_1\in\bN$ large with 
\begin{align}
\frac{3kL_1\alpha^{L_1}}{1-\alpha}<\frac{1-\alpha}{2}\label{L11},
\end{align}
which is possible since $\alpha\in(0,1)$.  Since each $I_j$ is a $k$-interval, the function $g_{L_1}$ changes value at most $2k(L_1+1)$ times by the definition \eqref{gm}, showing that $\card g_{L_1}(\bR)\leq 2k(L_1+1)<3kL_1$. Therefore, we may assume without loss of generality that $M>L_1$, because $\card g_{\ell}(\mathbb R)<3kL_1$ for each $\ell\leq L_1$, in which case the statement can be accommodated by increasing $L$, once the assertion is established for $M>L_1$. For $M>L_1$, we consider a {non-negative} integer $m<M-L_1$ and a number $\tau\in (\alpha^{m+1},\alpha^m)\cap S_M$. Then by (\ref{gm}), \[\tau=\sum_{j=m}^{M-1}\ee_{j}\alpha^j,\quad\text{ where }\ee_j\in\{0,1\}.\]
We define $T_{m,L_1}$ to be the collection of  all  vectors $(\ee_m,\dots,\ee_{m+L_1})$. Since $\card g_{L_1}(\bR)<3kL_1$, $\card T_{m,L_1}\leq 3kL_1$. Write {$\{t_1,\dots,t_{\ell}\}$ for the set of numbers that can be represented as $\sum_{j=m}^{m+L_1}\eps_j\alpha^j$ for some $(\ee_m,\dots,\ee_{m+L_1})\in T_{m,L_1}$}. Since $\sum_{i=L_1+1}^\infty\alpha^i=\alpha^{L_1+1}/(1-\alpha)$, 
\begin{align}
\tau\in\bigcup_{j=1}^{\ell}\bigg[t_j,t_j+\sum_{i=m+L_1+1}^\infty\alpha^i\bigg]=\bigcup_{j=1}^{\ell}\bigg[t_j,t_j+\frac{\alpha^{m+L_1+1}}{1-\alpha}\bigg].\label{tauin}\end{align}
By (\ref{L11}) and since $\ell\leq 3kL_1$,
\[
\bigg|\bigcup_{j=1}^{\ell}\bigg[t_j,t_j+\frac{\alpha^{m+L_1+1}}{1-\alpha}\bigg]\bigg|\leq \frac{3kL_1\alpha^{m+L_1+1}}{1-\alpha}<\frac{\alpha^m(\alpha-\alpha^2)}{2},
\]which means that for $L={6kL_1}/({\alpha-\alpha^2})$ there must be an interval\footnote{The location of such an interval may depend on $T_{L_1}$, but the {\it existence} does not.} of length $\alpha^m/L$, 
which is a subset of \[(\alpha^{m+1},\alpha^m)\setminus S_M\supseteq (\alpha^{m+1},\alpha^m)\setminus \bigcup_{j=1}^{3kL_1}\bigg[t_j,t_j+\frac{\alpha^{L_1+1}}{1-\alpha}\bigg].\]
This completes the proof {for $m<M-L_1$. But since $M$ is arbitrary and $L$ is independent of $M$, the case $M-L_1\le m<M$ follows simply by enlarging $M$ and proceeding with the same proof.}
\end{proof}

\begin{proof}
[Proof of Theorem \ref{hls}] The lower bound only needs to be proved for $H<1/2$. The idea  is to repeatedly apply \Cref{l2}, ``shrink'' the function $g_M$ at each step, and extract factors $\delta^2|I|^{2H}/L\geq 1/L$. Denote by $L_2$ the constant in \Cref{l3}. For $0\leq i\leq M-1$, define the shrunk functions of $g_M$ as
\[
g_M^{(i)}:=(g_M-\alpha^{M-i})_+,
\]
which intuitively corresponds to the function $h-\delta$ in  \Cref{l2} after applying this lemma several times. Intuitively, this separates the contributions for different $i$ using the thresholds $\alpha^{M-i-1}$ and $\alpha^{M-i}$. By (\ref{gm}), clearly we have $|g_M^{-1}(\alpha^{M-i},\infty)|\geq b^{M-i-1}$, which implies \begin{align}|\supp g_M^{(i)}|\geq b^{M-i-1}.\label{supp}\end{align}
Let us denote the points in $S_M\cap (\alpha^{M-i-1},\alpha^{M-i})$ by $s_M^{(i,1)}<\cdots<s_M^{(i,N_{M-i})}$, $S_M^{(i,0)}:=\alpha^{M-i-1},~S_M^{(i,N_{M-i}+1)}:=\alpha^{M-i}$, and the truncated functions $g_M^{(i,j)}:=(g_M-s_M^{(i,j)})_+$. By \Cref{l2}, for all $1\leq j\leq N_{M-i}+1$,\[\lr{g_M^{(i,j-1)}}{g_M^{(i,j-1)}}\geq \lr{g_M^{(i,j)}}{g_M^{(i,j)}}+\frac{1}{L}\left(s_M^{(i,j)}-s_M^{(i,j-1)}\right)^2|\supp g_M^{(i,j-1)}|^{2H}.\]Also by \Cref{l3}, there exists $1\leq j_0\leq N_{M-i}+1$ such that $s_M^{(i,j_0)}-s_M^{(i,j_0-1)}\geq \alpha^{M-i-1}/L_2$, so that by (\ref{supp}) and the relation $\alpha b^H=1$,
\begin{align*}
\lr{g_M^{(i)}}{g_M^{(i)}}&\geq \frac{1}{L}\Big(\frac{\alpha^{M-i-1}}{L_2}\Big)^2|\supp g_M^{(i)}|^{2H}+\lr{g_M^{(i+1)}}{g_M^{(i+1)}}\geq \frac{1}{L}+\lr{g_M^{(i+1)}}{g_M^{(i+1)}}.
\end{align*}Summation over $0\leq i\leq M-1$ yields that
\begin{align*}
\n{M_-^H(g_M)}_{L^2}^2&=\bE\left[\Big|\int_0^\infty  g_MdW_H\Big|^2\right]\geq \bE\left[\Big|\int_0^\infty  g_M^{(0)}dW_H\Big|^2\right]\geq \sum_{i=0}^{M-1}\frac{1}{L}=\frac{M}{L}.
\end{align*}
This proves the lower bound.

 Consider now the upper bound. Using the identity
 \begin{align}
     \n{M_-^H(g_M)}_{L^2}^2=\bE\bigg[\Big|\int_0^\infty  g_MdW_H\Big|^2\bigg]=\bE\left[\Big|\sum_{m=0}^{M-1}\int_0^\infty  f_mdW_H\Big|^2\right]=\sum_{m=0}^{M-1}\sum_{k=0}^{M-1}\lr{f_m}{f_k},\label{6}
 \end{align}
 it suffices to bound the right-hand side of \eqref{6} from above. We use induction on $M$. The base case is obvious, i.e., $\lr{f_0}{f_0}\leq L$. Consider for $1\leq n\leq M-1$,
 \begin{align*}
     \sum_{m=0}^{n}\sum_{k=0}^{n}\lr{f_m}{f_k}-\sum_{m=0}^{n-1}\sum_{k=0}^{n-1}\lr{f_m}{f_k}&=\lr{f_n}{f_n}+2\sum_{m=0}^{n-1}\lr{f_n}{f_m}\leq L+L\sum_{m=0}^{n-1}\alpha^{m+n}\lr{\bone_{I_m}}{\bone_{I_n}}.
 \end{align*}
If $0<H\leq 1/2$, then 
 $\lr{\bone_{I_m}}{\bone_{I_n}}\leq b^{2mH}$ due to the negativity of correlations. If $H>1/2$, by mean-value theorem, $\lr{\bone_{I_m}}{\bone_{I_n}}\leq Lb^mb^{n(2H-1)}.$ In either case, one easily checks that 
 \[\sum_{m=0}^{n}\sum_{k=0}^{n}\lr{f_m}{f_k}-\sum_{m=0}^{n-1}\sum_{k=0}^{n-1}\lr{f_m}{f_k}\leq L,\]
 where $L$ depends only on $\alpha,b$. Summing over this relation completes the proof.
 \end{proof}

\subsection{Covariance estimates}\label{c}

In the following results, the notation $\bone_{[c,d]}$ means $-\bone_{[d,c]}$ if $d<c$. Recall \eqref{ww} and \eqref{eq:K}.

\begin{lemma}
Let $\bY$ be the Wiener--Weierstrass  process with $H<K$, then there exists $L>0$ such that
for all $s, t\in[0,1]\text{ satisfying }|s-t|\leq b^{-L}$,
\[ \frac{1}{L}|t-s|^{2H}\leq \bE[|Y(t)-Y(s)|^2]\leq L|t-s|^{2H}.
\]In particular, $\bY$ is a quasi-helix in the sense of \cite{Kahane1981,kahane1993some}.
\label{MC}
\end{lemma}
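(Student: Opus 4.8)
The plan is to derive both bounds from the fractional integral representation and the extended Hardy–Littlewood inequality (\Cref{hls}), which is tailor-made for exactly this situation because $B_H$ is built from $W_H$ and the building blocks $B_H(\fp{b^nt})$ are supported (after subtracting the bridge correction $\kappa$) on homogeneous dyadic-type intervals. First I would fix $s<t$ with $|t-s|\le b^{-L}$ and write $Y(t)-Y(s)=\sum_{n\ge0}\alpha^n\big(B_H(\fp{b^nt})-B_H(\fp{b^ns})\big)$. Using $B_H=W_H-\kappa W_H(1)$, split this into a ``main part'' coming from the $W_H$-increments and a ``bridge part'' coming from the $\kappa$-terms. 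The bridge part is $-W_H(1)\sum_{n\ge0}\alpha^n(\kappa(\fp{b^nt})-\kappa(\fp{b^ns}))$; since $\kappa$ is $\tau$-Hölder with $\tau>H$ and $\sum\alpha^n b^{n\tau}<\infty$ (as $\alpha b^\tau<\alpha b^K\le 1$ when $H<K$ forces $-\log_b\alpha>H$, hence we may choose $\tau\in(H,-\log_b\alpha)$), this part has $L^2$-norm $O(|t-s|^\tau)=o(|t-s|^H)$, so it is negligible for both bounds.

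Next I would handle the main part $\sum_{n\ge0}\alpha^n(W_H(\fp{b^nt})-W_H(\fp{b^ns}))$. Here the key observation (already used in \cite{schied2024weierstrass}) is that for $n$ below the ``resolution level'' $N:=\lfloor -\log_b|t-s|\rfloor$, the fractional parts $\fp{b^nt}$ and $\fp{b^ns}$ lie in a common unit period and $W_H(\fp{b^nt})-W_H(\fp{b^ns})$ behaves like an increment of $W_H$ over an interval of length $b^n|t-s|$; for $n\ge N$ the terms are $O(\alpha^n)$ and, since $\alpha b^H<1$ exactly when $H<K$... wait, one must be careful: $\alpha b^H<1\iff H<-\log_b\alpha$, which together with $H<1$ is precisely $H<K$. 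Thus the tail $\sum_{n\ge N}\alpha^n(\cdots)$ contributes $O(\sum_{n\ge N}\alpha^n)=O(\alpha^N)=O(|t-s|^{-\log_b\alpha})=o(|t-s|^H)$ in $L^2$, again negligible. So everything reduces to the finite sum over $0\le n<N$, which is a fractional Wiener integral $\int_0^\infty f\,dW_H$ with $f=\sum_{n=0}^{N-1}\alpha^n\bone_{I_n}$ and each $I_n$ an interval of length $b^n|t-s|$ — up to the scaling $|t-s|$, precisely the homogeneous-indicator setup of \Cref{hls} with $M=N$ and $k=1$ (or $k=2$ to absorb wrap-around near the endpoints of the period). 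Applying \Cref{hls} (with the $|t-s|$ scaling pulled out via the $H$-self-similarity of $W_H$, contributing $|t-s|^{2H}$) and using $2H$-homogeneity of the Wiener integral gives $\bE[|\text{main part}|^2]\asymp |t-s|^{2H}\cdot N$...

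But that is off by a factor $N\asymp\log(1/|t-s|)$, so I must not pull out the scaling naively — instead I apply \Cref{hls} directly to $M_-^H(f)$ after rescaling space by $|t-s|$: the rescaled intervals have lengths $b^0,b^1,\dots,b^{N-1}$ and the whole system sits inside $[0,b^{N}|t-s|]\subseteq[0,1]$, and crucially the $M$ in \Cref{hls} counts the number of summands, which here is $N$ — so I instead should \emph{not} use all $N$ levels but observe that the lower bound of \Cref{hls} already gives $\ge N/L$, contradicting the desired $\asymp|t-s|^{2H}$ unless... The resolution, and this is the step I expect to be the real obstacle: one cannot feed all of $\sum_{n=0}^{N-1}$ into \Cref{hls} with the trivial rescaling; rather, the correct normalization makes each $f_n$ have $L^\infty$-norm $\alpha^n$ \emph{and} support length $b^n|t-s|=b^{n-N}$, so after rescaling by $b^{N}$ (not $|t-s|$) the supports have length $b^n$ — matching \Cref{hls} — but then the coefficients become $\alpha^n = b^{-nH}$ and the overall prefactor from rescaling a fractional integral by $b^N$ is $b^{-NH\cdot 2}\cdot b^{?}$; bookkeeping the exact power of $b^N$ and checking it equals $|t-s|^{2H}$ up to constants, while \Cref{hls} supplies the bounded two-sided constant (the ``$M$'' there is actually absorbed because our $M=N$ but the rescaling prefactor carries a compensating $b^{-2NH}=|t-s|^{2H}$ only if the $N$ cancels — it does not, so in fact the honest statement is that after correct rescaling the relevant quantity is $b^{-2NH}\|M_-^H(g_N)\|_{L^2}^2\asymp b^{-2NH}N$, and then $b^{-2NH}N\asymp |t-s|^{2H}\log(1/|t-s|)$, which is \emph{not} $\asymp|t-s|^{2H}$!).

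Hence the actual correct approach must avoid \Cref{hls} for this particular lemma and instead use strong-local-nondeterminism-type estimates plus the quasi-helix structure: the upper bound $\bE[|Y(t)-Y(s)|^2]\le L|t-s|^{2H}$ follows by Minkowski from $\|W_H(\fp{b^nt})-W_H(\fp{b^ns})\|_2=|b^n(t-s)\bmod 1\text{-type quantity}|^H\le (b^n|t-s|)^H$ for $n<N$ summed against $\alpha^n$, giving $\sum_{n<N}\alpha^n (b^n|t-s|)^H=|t-s|^H\sum_{n<N}(\alpha b^H)^n\le |t-s|^H/(1-\alpha b^H)$, squared; plus the negligible tail and bridge terms. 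For the lower bound I would use the conditional-variance / projection idea: $\bE[|Y(t)-Y(s)|^2]\ge$ (variance of the $n=0$ term given all others) and exploit that $B_H(t)-B_H(s)$ already has variance $\ge |t-s|^{2H}/L$ from the Hardy–Littlewood lower bound (\Cref{lemma:Hardy--Littlewood inequality}) when $H\le 1/2$, and from positive correlation of increments when $H\ge 1/2$, combined with a fractional-integral representation of $Y(t)-Y(s)$ as $\int f\,dW_H$ where $f$ does not change sign on the relevant scale, so that $\|M_-^H f\|_{L^2}^2\ge \|M_-^H(\alpha^0\bone_{I_0\text{-part}})\|^2 - \text{cross terms}$, the cross terms being controlled by $\alpha b^H<1$. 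The main obstacle, then, is establishing the lower bound rigorously: showing that the higher-frequency terms ($n\ge1$) cannot cancel the $n=0$ increment, which requires either an \Cref{l1}-style positivity/monotonicity argument for the bilinear form $\langle\cdot,\cdot\rangle$ applied to the specific non-homogeneous configuration $\{I_n\}_{n<N}$, or invoking that $Y$ is strongly locally nondeterministic with $\rho(s,t)=|t-s|^H$ (conjectured in the paper for $H<K$ but likely provable in the regime where all the action is at levels $n<N$, i.e. via a direct Gram-determinant / best-approximation estimate in $L^2(dW_H)$). Once both bounds are in hand, the quasi-helix conclusion is immediate from the very definition in \cite{Kahane1981,kahane1993some}: a centered Gaussian process whose canonical metric satisfies $\frac1L|t-s|^{2H}\le\|Y(t)-Y(s)\|_2^2\le L|t-s|^{2H}$ on a neighborhood of the diagonal (and trivially on all of $[0,1]^2$ by compactness, adjusting $L$) is precisely a quasi-helix of index $H$.
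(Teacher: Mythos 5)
Your upper bound is essentially the paper's (Minkowski plus geometric summation using $\alpha b^H<1$), though for the levels $n$ at which the fractional parts ``flip'' you need the bridge property $B_H(0)=B_H(1)=0$, since the bare estimate $\|W_H(\{b^nt\})-W_H(\{b^ns\})\|_2\le (b^n|t-s|)^H$ is false there. Also, your long detour through \Cref{hls} is a dead end for a simpler reason than the one you give: \Cref{hls} requires the homogeneity $b=\alpha^{-1/H}$, i.e.\ $\alpha b^H=1$, which is precisely the critical case treated in \Cref{MC2} (where the extra logarithm you worried about is in fact the correct answer); for $H<K$ one has $\alpha b^H<1$, the sum is geometrically dominated by the $n=0$ level, and no such machinery is needed.

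The genuine gap is the lower bound, which you yourself flag as ``the real obstacle'' and leave unresolved. Your fallbacks do not close it: invoking strong local non-determinism of $Y$ is circular (the paper states it as an open conjecture), and the cross-term route $\|M^H_-(f)\|_{L^2}^2\ge\|M^H_-(f_0)\|_{L^2}^2-2|\langle f_0,r\rangle|$ with Cauchy--Schwarz yields only $|t-s|^{2H}\bigl(1-2\alpha b^H/(1-\alpha b^H)\bigr)$, which is positive only when $\alpha b^H<1/3$; moreover for $H<1/2$ the increments are negatively correlated, so the cross terms have no favorable sign and cannot simply be dropped. The paper's key step, which you miss, is a \emph{pointwise} bound that requires no cancellation argument at all: writing $Y(t)-Y(s)=\int_0^1 g\,dW_H$ and using the bridge to rewrite each flipped level as the nonnegative indicator $\alpha^m\bone_{[0,\{b^mt\}]\cup[\{b^ms\},1]}$ plus a constant absorbed into the $\kappa$-terms, one finds $g=\bone_{[s,t]}+(\text{nonnegative step function})+(\text{constants of size }o(|t-s|^{2H-1}))$, hence $g\ge 1/2$ on the interval $[s,t]$ itself. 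For $H<1/2$ the Hardy--Littlewood lower bound (Lemma~\ref{lemma:Hardy--Littlewood inequality}) then gives $\|g\|_{L^{1/H}}^2\ge\bigl(\int_s^t g^{1/H}\,dx\bigr)^{2H}\ge\frac1L|t-s|^{2H}$ immediately, with no cross terms to control; for $H\ge1/2$ one expands the square around $\bone_{[s,t]}$, the cross term against the nonnegative part is $\ge0$ by positive correlation of increments, and the cross term against the small constant is handled by the mean value theorem. Without this decomposition your lower bound does not go through for general $\alpha b^H<1$.
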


\begin{proof}
The upper bound is straightforward by Minkowski's inequality and (\ref{ww}), so we focus on the lower bound. Fix $\alpha\in(0,1)$ and $b\in\{2,3,\dots\}$ with $H\leq K$. We consider a large number $L_3$ to be determined and we choose $L\in\bN$ such that
\begin{align}
\sum_{m=L}^\infty \alpha^m<\frac{1}{L_3},\label{m1}
\end{align}
and 
\begin{align}
\begin{cases}
\frac{\alpha^L}{\alpha b^\tau-1}<\frac{1}{L_3}&\text{ if }\alpha b^\tau>1;\\
\forall N\geq L,\ Nb^{-\tau N}<\frac{1}{L_3}&\text{ if }\alpha b^\tau=1;\\
\frac{b^{-\tau L}}{1-\alpha b^\tau}<\frac{1}{L_3}&\text{ if }\alpha b^\tau<1,
\end{cases}\label{L1}
\end{align}
and for all $\delta<b^{-L}$,
\begin{align}
\begin{cases}
L_3\delta^{H+1}<\delta^{2H}&\text{ if }\alpha b^\tau>1;\\
L_3\left(\delta^{H+1}-\delta^2\log\delta\right)<\delta^{2H}&\text{ if }\alpha b^\tau=1;\\
L_3\left(\delta^{H+1}+\delta^2\right)<\delta^{2H}&\text{ if }\alpha b^\tau<1.
\end{cases}\label{L2}
\end{align}

Fix $0\leq s\leq t\leq 1$ with $b^{-M-1}<t-s\leq b^{-M}$ so that $M\geq L$. Observe that
\begin{align*}
Y(t)-Y(s)&=\sum_{m=0}^\infty \alpha^m(B_H(\{b^mt\})-B_H(\{b^ms\}))\\
&=\sum_{m=0}^\infty \alpha^m(W_H(\{b^mt\})-W_H(\{b^ms\})-(\kk(\{b^mt\})-\kk(\{b^ms\}))W_H(1))=\int_0^1g(x)\,dW_H(x)
\end{align*}as a Wiener integral, where\begin{align}
g(x):=\sum_{m=0}^{\infty}\alpha^m\bone_{[\{b^ms\},\{b^mt\}]}(x)-\sum_{m=0}^\infty \alpha^m (\kk(\{b^mt\})-\kk(\{b^ms\})).\label{eq:g}
\end{align}
This integral is well-defined using Lemma 3.1 of \cite{schied2024weierstrass}. Define
\begin{align}
\ell :=\inf\{1\leq k\leq M-1:\{b^ks\}>\{b^kt\}\}\wedge M.\label{k}
\end{align} We claim that for $0\leq k<\ell $,
\begin{align}
0\leq \{b^ks\}<\{b^kt\}<1\ \text{ and }\ \{b^kt\}-\{b^ks\}=b^kt-b^ks\leq b^{k-M},\label{k1}
\end{align}and for $\ell \leq k<M$ (since $t-s<b^{-M}$),
\begin{align}
0\leq \{b^kt\}<\{b^ks\}<1\ \text{ and }\ \{b^ks\}-\{b^kt\}= 1-(b^kt-b^ks)\geq 1-b^{k-M}.\label{k2}
\end{align}
Indeed, $b^\ell t-b^\ell s\leq b^{\ell -M}<1$ implies $\{b^kt\}+(1-\{b^ks\})\leq b^{\ell -M}$ and for $k\in[\ell ,M)$, we have $\{b^kt\}=b^{k-\ell }\{b^kt\}\leq b^{-1}$ and $1-\{b^ks\}=b^{k-\ell }(1-\{b^ks\})\leq b^{-1}$. Therefore, $\{b^kt\}<\{b^ks\}$, i.e., the order of $\{b^kt\}$ and $\{b^ks\}$ flips at most once for $0\leq k<M$ (i.e., if $k=\ell $) and after they flip, one of them is very close to $0$ and the other very close to $1$, with the distances proportional to $b^k$.

Let us write $g(x)=\sum_{i=1}^3g_i(x)$ where
\begin{align}\displaystyle
\begin{cases}\displaystyle
g_1(x):=\sum_{m=0}^{\ell -1}\alpha^m \bone_{[\{b^ms\},\{b^mt\}]}(x)+\sum_{m=\ell }^{M-1}\alpha^m\bone_{[0,\{b^mt\}]\cup[\{b^ms\},1]}(x);\\
g_2(x):=-\displaystyle\sum_{m=0}^{\ell -1}\alpha^m(\kk(\{b^mt\})-\kk(\{b^ms\}))-\sum_{m=\ell }^{M-1}\alpha^m\left(1-(\kk(\{b^ms\})-\kk(\{b^mt\}))\right);\\
g_3(x):=\displaystyle\sum_{m=M}^\infty \alpha^m \bone_{[\{b^ms\},\{b^mt\}]}(x)-\sum_{m=M}^\infty \alpha^m (\kk(\{b^mt\})-\kk(\{b^ms\})).\end{cases}\label{123}
\end{align}
Note that $g_2(x)$ does not depend on $x$.

Let $x\in[s,t]$, then $g_1(x)\geq 1$. We also have
\[
g_3(x)\geq -(1+2\sup|\kk|)\sum_{m=M}^\infty \alpha^m=:-K_1(M).
\]
By (\ref{k2}), for $\ell \leq k<M$, $\{b^ks\}-\{b^kt\}\geq 1-b^{k-M}$, so that by H\"{o}lder continuity of $\kk$,
\begin{align*}
g_2(x)&\geq -\sum_{m=\ell }^{M-1}\alpha^m (\{b^mt\}^\tau+(1-\{b^ms\})^\tau)-\sum_{m=0}^{\ell -1}\alpha^m(b^m(t-s))^{\tau}\\
&\geq -2b^{-M\tau}\sum_{m=0}^{M-1}(\alpha b^\tau)^m\\
&\geq\begin{cases}-2(\alpha b^\tau-1)^{-1}\alpha^M\ &\text{ if } \alpha b^\tau>1\\
-2Mb^{-\tau M}\ &\text{ if } \alpha b^\tau=1\\
-2(1-\alpha b^\tau)^{-1}b^{-\tau M}\ &\text{ if } \alpha b^\tau<1
\end{cases}\quad=:-K_2(M).
\end{align*}
Therefore by (\ref{m1}) and (\ref{L1}), and since $M\geq L$, if $L_3$ is large,
\begin{align}
g(x)\geq 1+g_2(x)+g_3(x)\geq 1-(K_1(M)+K_2(M))\geq \frac{1}{2}.\label{2}
\end{align}
Also by (\ref{L2}) and since $\alpha\leq b^{-H}$, if $L_3$ is large,
\begin{align}
K_1(M)+K_2(M)\leq \frac{1}{2H}|t-s|^{2H-1}.\label{33}
\end{align}
Consider first the case $0<H<1/2$. By Lemma \ref{lemma:Hardy--Littlewood inequality},
\[
\bE[|Y(t)-Y(s)|^2]\geq \frac{1}{L}\n{g}_{L^{1/H}}^2\geq \frac{1}{L}\left(\int_s^tg(x)^{1/H} dx\right)^{2H}\geq \frac{1}{L}|t-s|^{2H},
\]as required. Now let us assume that $1/2\leq H<1$. Expanding the square, we have
\begin{align*}
\bE(|Y(t)-Y(s)|^2)&=\bE\left[\Big|\int_0^1\Big(g-\bone_{[s,t]}\Big)dW_H\Big|^2\right]+\bE[|W_H(t)-W_H(s)|^2]\\
&\hspace{1cm}+2\bE\bigg[(W_H(t)-W_H(s))\bigg(\int_0^1\Big(g-\bone_{[s,t]}\Big)dW_H\bigg)\bigg]\\
&\geq \bE[|W_H(t)-W_H(s)|^2]+2\bE\left[(W_H(t)-W_H(s))\left(-(K_1(M)+K_2(M))W_H(1)\right)\right]\\
&= |t-s|^{2H}-2(K_1(M)+K_2(M))\bE[(W_H(t)-W_H(s))W_H(1)],
\end{align*}
where the second step is because $\bW_H$ has non-negatively correlated increments for $H\geq 1/2$ and (\ref{2}). By the mean-value theorem and since $x\mapsto x^{2H}$ is convex,
\[
\bE[(W_H(t)-W_H(s))W_H(1)]\leq H|t-s|.
\]Thus by (\ref{33}),
\[
\bE[|Y(t)-Y(s)|^2]\geq |t-s|^{2H}-H|t-s|(K_1(M)+K_2(M))\geq \frac{1}{2}|t-s|^{2H}.
\]
This finishes the proof for $H\geq 1/2$.
\end{proof}

Observe that we only picked the interval $[s,t]$ when estimating $g_1(x)$ from below, while ignoring the other contributions from $\{[b^ms,b^mt]:m=1,\dots,L-1\}$. When taking care of this and other possible contributions, a more refined argument can provide a more precise estimate in the case of $H=K$.

\begin{lemma}\label{MC2}
Let $\bY$ be the Wiener--Weierstrass  process with $H=K$, then there exists $L>0$ such that for all $s, t\in[0,1]$ with $|s-t|\leq b^{-L}$,
\[
\frac{1}{L}|t-s|^{2H}\log\Big(\frac{1}{|t-s|}\Big)\leq\bE[|Y(t)-Y(s)|^2]\leq L|t-s|^{2H}\log\Big(\frac{1}{|t-s|}\Big).
\]In particular, $\bY$ is a quasi-helix.
\end{lemma}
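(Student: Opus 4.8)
The plan is to follow the same scheme as in the proof of Lemma \ref{MC} but to retain the contributions from all scales $m=0,\dots,M-1$ rather than just the single interval $[s,t]$. As before, write $Y(t)-Y(s)=\int_0^1 g(x)\,dW_H(x)$ with $g=g_1+g_2+g_3$ as in \eqref{123}, where $b^{-M-1}<t-s\le b^{-M}$ and $\ell$ is the first flip index defined in \eqref{k}. The key observation is that $g_1$ is a sum of at most $M$ homogeneous indicator functions: for $0\le m<\ell$ it equals $\alpha^m\bone_{[\{b^ms\},\{b^mt\}]}$ on an interval of length $b^m(t-s)\in(b^{m-M-1},b^{m-M}]$, and for $\ell\le m<M$ it equals $\alpha^m$ times an indicator of a $2$-interval of total length $1-b^m(t-s)\in[1-b^{m-M},1)$. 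Since $H=K$ forces $\alpha=b^{-H}$, i.e.\ $b=\alpha^{-1/H}$, we would rescale so that each of these $M$ indicator supports has length exactly a constant times a power $b^{\text{(integer)}}$ and apply \Cref{hls} (with $k=2$) to get $\|M_-^H(g_1)\|_{L^2}^2\asymp M\asymp\log(1/|t-s|)$, hence $\|g_1\|\ \asymp |t-s|^H\sqrt{\log(1/|t-s|)}$ after undoing the scaling, since $\alpha^M b^{MH}=1$ contributes the $|t-s|^{2H}$ factor and $M$ the logarithm.

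For the upper bound I would bound $\|Y(t)-Y(s)\|_2\le\|g_1\|+\|g_2\|+\|g_3\|$ in the relevant Wiener-integral norm (i.e.\ $\|M_-^H(\cdot)\|_{L^2}$), where $\|g_1\|^2\le LM$ comes directly from the upper bound half of \Cref{hls}, $g_3$ contributes only $O(\alpha^M)=o(|t-s|^H)$ exactly as in Lemma \ref{MC}, and $g_2$ is a constant function whose $W_H$-integral has variance $g_2^2\,\mathrm{Var}(W_H(1))$; the bound on $|g_2|$ from Lemma \ref{MC}'s computation under $\alpha b^\tau=1$ (the relevant subcase here since $\tau>H$ and $\alpha b^H=1$ means $\alpha b^\tau>1$, actually, so $|g_2|\le L\alpha^M$) gives a negligible term. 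Combining, $\|Y(t)-Y(s)\|_2^2\le L|t-s|^{2H}\log(1/|t-s|)$.

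For the lower bound the subtlety is that the triangle inequality only gives an upper bound, so one cannot simply drop $g_2$ and $g_3$. Instead I would mimic the $H\ge1/2$ expansion in Lemma \ref{MC}: write $\mathbb E[|Y(t)-Y(s)|^2]=\lr{g_1}{g_1}+\lr{g_2+g_3}{g_2+g_3}+2\lr{g_1}{g_2+g_3}$, use $\lr{g_1}{g_1}=\|M_-^H(g_1)\|_{L^2}^2\ge M/L$ from \Cref{hls}, bound the last cross term by Cauchy--Schwarz as $2\|g_1\|\,\|g_2+g_3\|\le L\sqrt M\cdot(L\alpha^M)$ which is $o(\sqrt M)$, and note $\lr{g_2+g_3}{g_2+g_3}\ge0$. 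This yields $\mathbb E[|Y(t)-Y(s)|^2]\ge M/(2L)\ge\frac1L|t-s|^{2H}\log(1/|t-s|)$. For $H<1/2$ the positivity-of-correlations argument is unavailable, so here I would instead apply \Cref{l2}/\Cref{l1}-type reasoning directly to the full nonnegative part: the argument of \Cref{hls} actually extracts the lower bound from $g_1$ alone provided $g_1\ge$ the threshold functions, and since $g_1$ dominates $|g_2|,|g_3|$ on the relevant supports (both being $O(\alpha^M)$) for $M$ large, the same telescoping works; alternatively, one can absorb the tiny constant $g_2$ and tail $g_3$ into the "$\delta$" slack and lose only a constant factor.

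\medskip

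The main obstacle I expect is the bookkeeping in the lower bound for $H<1/2$: unlike in Lemma \ref{MC}, where one could afford to throw away every scale but one, here we genuinely need all $M$ scales to contribute, so one must verify that \Cref{hls}'s shrinking-and-extraction procedure is not spoiled by the constant shift $g_2$ and the tail $g_3$ (which perturb $g_1$ by $O(\alpha^M)$, comparable to the smallest homogeneous level $\alpha^{M-1}$). Handling this cleanly likely requires either proving a perturbed version of \Cref{hls} or choosing the thresholds $s_M^{(i,j)}$ in the proof of \Cref{hls} to stay a definite distance away from the perturbation, at the cost of only finitely many lost levels; the remaining $M-O(1)$ levels still give the desired $\asymp M=\asymp\log(1/|t-s|)$ lower bound.
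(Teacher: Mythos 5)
Your proposal is correct and follows essentially the same route as the paper: the same decomposition $g=g_1+g_2+g_3$ from \eqref{123}, \Cref{hls} with $k=2$ (after rescaling) for the $g_1$-contribution, and the observation that $g_2$ and $g_3$ contribute only $O(|t-s|^H)$. The one point worth clarifying is that the ``main obstacle'' in your last paragraph never arises: \Cref{hls} is only ever applied to $g_1$ alone, and since $\|\int g_i\,dW_H\|_2\le L|t-s|^H$ for $i=2,3$ while $\|\int g_1\,dW_H\|_2\ge \frac1L|t-s|^H\sqrt{\log(1/|t-s|)}$, the \emph{reverse} triangle inequality already yields the lower bound for $|t-s|$ small --- this is exactly what the paper does --- so no perturbed version of the extraction procedure is needed; your alternative via expanding the square and Cauchy--Schwarz on the cross term also works, and for all $H$, not only $H\ge1/2$. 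The only genuine technical point, which you rightly flag and the paper treats tersely, is that the supports of the homogeneous pieces of $g_1$ have lengths $b^m|t-s|$ and $1-b^m|t-s|$ rather than exact powers of $b$, so one must either rescale or check that the constants in \Cref{hls} are robust to lengths within a bounded factor of $b^m$; the paper sidesteps this for $H\ge1/2$ by using positive correlation of increments in place of \Cref{hls}.
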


\begin{proof}
Consider a large number $L$ and $0\leq s\leq t\leq 1,\ |s-t|\leq b^{-L}$. Choose $M$ such that $b^{-M-1}<t-s\leq b^{-M}$. Recall from the proof of \Cref{MC} that $Y(t)-Y(s)=\int_0^1g(x)\,dW_H(x)$, where $g$ is defined in \eqref{eq:g}. Define $\ell$ as in (\ref{k}) and recall (\ref{123}). Using Minkowski's inequality we will show that the contribution from $g_1$ is of the right order and that from $g_2,g_3$ are negligible. Let us consider first $1/2\leq H<1$ so that the increments are positively correlated, which yields that
\begin{align*}
&\hspace{0.5cm}\bE\bigg[\Big|\int_0^1g_1dW_H\Big|^2\bigg]\\
&\geq \sum_{m=0}^{\ell -1}\bE[\alpha^{2m}(W_H(\{b^mt\})-W_H(\{b^ms\}))^2]+\sum_{m=\ell }^{M-1}\bE[\alpha^{2m}(W_H(\{b^mt\})+W_H(1)-W_H(\{b^ms\}))^2]\\
&\geq \sum_{m=0}^{\ell -1}\alpha^{2m}b^{2mH}|t-s|^{2H}+\frac{1}{L}\sum_{m=\ell }^{M-1}\alpha^{2m}b^{2mH}|t-s|^{2H}\ge \frac1L |t-s|^{2H} M\\
&\geq \frac{1}{L}|t-s|^{2H}(-\log|t-s|),
\end{align*}
where the second inequality follows from the fact that
\begin{align*}
    \bE[(W_H(\{b^mt\})+W_H(1)-W_H(\{b^ms\}))^2]&\geq \max\Big\{\bE[W_H(\{b^mt\})^2],\bE[(W_H(1)-W_H(\{b^ms\}))^2]\Big\}\\
    &\geq \left(\frac{b^m|t-s|}{2}\right)^{2H}.
\end{align*}
On the other hand, to give the upper bound, we further decompose $g_1(x)=\sum_{i=1}^3g_{1,i}(x)$ where
\begin{align}
\begin{cases}
\displaystyle g_{1,1}(x):=\sum_{m=0}^{\ell -1}\alpha^m \bone_{[\{b^ms\},\{b^mt\}]}(x);\\
\displaystyle g_{1,2}(x):=\sum_{m=\ell }^{M-1}\alpha^m\bone_{[0,\{b^mt\}]}(x);\\
\displaystyle g_{1,3}(x):=\sum_{m=\ell }^{M-1}\alpha^m\bone_{[\{b^ms\},1]}(x).\end{cases}\label{3}
\end{align}

 By expanding the square, the mean-value theorem, and since $x\mapsto x^{2H}$ is convex, we have 
\begin{align*}
&\hspace{0.5cm}\bE\bigg[\Big|\int_0^1g_{1,1}dW_H\Big|^2\bigg]\\
&=\sum_{m=0}^{\ell -1}\bE[\alpha^{2m}(W_H(\{b^mt\})-W_H(\{b^ms\}))^2]\\
&\hspace{1.6cm}+2\sum_{0\leq m<k<\ell }\bE[\alpha^{m+k}(W_H(\{b^mt\})-W_H(\{b^ms\}))(W_H(\{b^kt\})-W_H(\{b^ks\}))]\\
&\leq \ell |t-s|^{2H}+L\sum_{0\leq m<k<\ell }\alpha^{m+k}(b^m|t-s|)(b^k|t-s|)^{(2H-1)}\le LM|t-s|^{2H}\\
&\leq L|t-s|^{2H}(-\log|t-s|).
\end{align*}
Similar arguments apply for $g_{1,2},g_{1,3}$. Therefore, by Minkowski's inequality,
\[
\bn{\int_0^1g_1dW_H}_2\leq \sum_{i=1}^3\bn{\int_0^1g_{1,i}dW_H}_2\leq L|t-s|^H\sqrt{-\log|t-s|}.
\]
 Hence we conclude that
\[
\frac{1}{L}|t-s|^{2H}(-\log|t-s|)\leq \bE\bigg[\Big|\int_0^1g_1dW_H\Big|^2\bigg]\leq L|t-s|^{2H}(-\log|t-s|).
\]
That is,
\[\frac{1}{L}|t-s|^{H}\sqrt{-\log|t-s|}\leq \bn{\int_0^1g_1dW_H}_2\leq L|t-s|^{H}\sqrt{-\log|t-s|}\]
We also have by Minkowski's inequality, (\ref{k1}), and (\ref{k2}),
\[
\bn{\int_0^1g_2dW_H}_2=|g_2|\leq |K_2(M)|\leq L|t-s|^H,
\]
where $K_2(M)$ is as in the proof of \Cref{MC} and
\[
\bn{\int_0^1g_3dW_H}_2\leq\sum_{m=M}^\infty \alpha^m \left(\n{W_H(\{b^mt\})-W_H(\{b^ms\})}_2+|\kappa(\{b^mt\})-\kappa(\{b^ms\})|\right)\leq L\alpha^M=L|t-s|^H.
\]
Applying again Minkowski's inequality yields that for $|t-s|<b^{-L}$ and $L$ large enough,
\[\frac{1}{L}|t-s|^{2H}(-\log|t-s|)\leq \bE\bigg[\Big|\int_0^1gdW_H\Big|^2\bigg]\leq L|t-s|^{2H}(-\log|t-s|). 
\]This concludes the case $H\geq 1/2$.

Now we consider the case $0<H<1/2$. Recall (\ref{123}) and (\ref{3}). Following the case $H\geq 1/2$, it suffices to show \[
\frac{1}{L}|t-s|^{2H}(-\log|t-s|)\leq \bE\bigg[\Big|\int_0^1g_1dW_H\Big|^2\bigg]\leq L|t-s|^{2H}(-\log|t-s|).
\]The lower bound now follows from \Cref{hls} (applied with $k=2$) since $b^{-M-1}<t-s\leq b^{-M}$. The upper bound follows similarly as before, which we sketch below: recalling (\ref{3}), we have
\begin{align*}
&\hspace{0.5cm}\bE\bigg[\Big|\int_0^1g_{1,1}dW_H\Big|^2\bigg]\\
&=\sum_{m=0}^{\ell -1}\bE[\alpha^{2m}(W_H(\{b^mt\})-W_H(\{b^ms\}))^2]\\
&\hspace{1.6cm}+2\sum_{0\leq m<k<\ell }\bE[\alpha^{m+k}(W_H(\{b^mt\})-W_H(\{b^ms\}))(W_H(\{b^kt\})-W_H(\{b^ks\}))]\\
&\leq \ell |t-s|^{2H}+L\sum_{0\leq m<k<\ell }\alpha^{m+k}(b^m|t-s|)^{2H}\\
&\leq L|t-s|^{2H}(-\log|t-s|),
\end{align*}
and the rest follows line by line as in the case $H\geq 1/2$. 
\end{proof}

The following result will be useful in deriving the Hausdorff dimension of the graph of fractional Wiener--Weierstrass  bridges.

\begin{lemma}
\label{1l}
Fix $N\in\bN$ and suppose that $K\in(2H-1,H)$. Define
\begin{align}
    T_N:=\{x\in[0,1]:\text{ for all }k\in\bN_0,\ \{b^kx\}\in[b^{-N},1-b^{-N}]\}.\label{eq:TN}
\end{align}
Then for all $t,s\in T_N$ with $|t-s|<b^{-L}$, 
\[\bE[|Y(t)-Y(s)|^2]\geq \frac{1}{L}|t-s|^{2K}.\]
Here, $L$ may depend on $N$.
\end{lemma}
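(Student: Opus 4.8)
The plan is to reduce the lower bound to the representation $Y(t)-Y(s)=\int_0^1 g(x)\,dW_H(x)$ already derived in the proof of \Cref{MC}, and then exploit the structural constraint encoded in the definition of $T_N$: if $t,s\in T_N$, then for every $k$ the fractional parts $\{b^k t\}$ and $\{b^k s\}$ lie in $[b^{-N},1-b^{-N}]$, so in particular the ``order flip'' phenomenon of \eqref{k} cannot happen for $k<M$ as long as $b^{-M}<b^{-N}$, i.e. $\ell=M$. Consequently the function $g$ simplifies: the term $g_1$ in \eqref{123} becomes $\sum_{m=0}^{M-1}\alpha^m\bone_{[\{b^ms\},\{b^mt\}]}$ with $\{b^mt\}-\{b^ms\}=b^m(t-s)$ for all $m<M$, each of these intervals staying strictly inside $(b^{-N},1-b^{-N})$. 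First I would record this simplification carefully, together with the bound $|g_2|+|g_3|\le L|t-s|^H$ coming verbatim from the two preceding proofs (these contributions are of the subleading order $|t-s|^H$, hence negligible compared to the target $|t-s|^K$ since $K<H$).

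Next I would bound $\bE[|\int_0^1 g_1\,dW_H|^2]$ from below. For $H\ge 1/2$ this is the easy direction: positive correlation of the increments of $W_H$ gives $\bE[|\int g_1\,dW_H|^2]\ge \sum_{m=0}^{M-1}\alpha^{2m}(b^m(t-s))^{2H}=\sum_{m=0}^{M-1}(\alpha b^H)^{2m}|t-s|^{2H}$; since $H>K$ means $\alpha b^H<1$ (recall $K=-\log_b\alpha<H$), this geometric sum is of order $|t-s|^{2H}$ — which is \emph{too small}, being $\ll |t-s|^{2K}$. So the single-scale estimate is not enough and one must capture the cross-terms. The right way is instead to choose a single well-separated scale. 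Here is where the hypothesis $K>2H-1$ and the set $T_N$ enter: write $g_1=\sum_{m}\alpha^m\bone_{[\{b^ms\},\{b^mt\}]}$ and estimate $\|g_1\|_{L^{1/H}}$ or, more robustly, use the $M^H_-$ representation. Because all the defining intervals live in $[b^{-N},1-b^{-N}]$ and have lengths $b^m(t-s)\in(b^{-N},1]$ comparable across $m$ up to factors $b^m$, the function $g_1$ behaves (near its leftmost interval, the $m=0$ term of unit height on an interval of length $t-s$) like a homogeneous indicator; one then invokes \Cref{l2}/\Cref{l1}-type monotonicity, or directly the Hardy–Littlewood lower bound of \Cref{lemma:Hardy--Littlewood inequality} when $H<1/2$, to extract $\bE[|\int g_1\,dW_H|^2]\ge \frac{1}{L}\|g_1\|_{L^{1/H}}^2\ge \frac{1}{L}\big(\int_{\{b^0 s\}}^{\{b^0 t\}} 1\,dx\big)^{2H}=\frac{1}{L}|t-s|^{2H}$. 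This still only yields $|t-s|^{2H}$.

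The correct mechanism — and the main obstacle — is that we do not want the contribution of a single scale but the \emph{cumulative} positive contribution of the $M\asymp \log_b(1/|t-s|)$ scales, \emph{upgraded} by the separation guaranteed on $T_N$. Concretely: on $T_N$ the intervals $[\{b^ms\},\{b^mt\}]$ are forced to stay away from $0$ and $1$, which prevents the near-cancellations that plague the general case of \Cref{MC}; this should let us run the shrinking argument of the proof of \Cref{hls} (repeated application of \Cref{l2}, extracting a factor $\ge 1/L$ at each of the $M-N$ relevant scales) but now with the thresholds $\alpha^m$ and with $|\,\mathrm{supp}\,|^{2H}$ replaced by the \emph{largest} available length at scale $m$, which on $T_N$ is $\ge (1-2b^{-N})\ge 1/L$ rather than $b^m(t-s)$. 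Each extraction then contributes $\alpha^{2m}\cdot(1/L)^{2H}/L$, and since $\alpha^{2m}\ge \alpha^{2M}\ge \frac1L|t-s|^{2K}$ for $m<M$ (using $\alpha=b^{-K}$), summing over the $\asymp M$ scales — or even just using the single worst scale $m=M-1$ — gives $\bE[|\int g_1\,dW_H|^2]\ge \frac{1}{L}|t-s|^{2K}$ (the logarithmic factor $M$ only helps and may be discarded). I would then combine with $|g_2|+|g_3|\le L|t-s|^H=o(|t-s|^K)$ via Minkowski, exactly as at the end of the proof of \Cref{MC2}, to conclude $\|Y(t)-Y(s)\|_2\ge \frac{1}{L}|t-s|^K$ for $|t-s|<b^{-L}$ with $L$ depending on $N$. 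The delicate point to get right is verifying that the \Cref{l2}/\Cref{l3} shrinking machinery applies to $g_1$ restricted to $T_N$ — in particular that between consecutive values of $g_1$ there is always a gap of length $\ge 1/L$ inside $[b^{-N},1-b^{-N}]$, which is where both $K>2H-1$ (ensuring the cross terms in the upper-bound companion estimate stay controlled) and the explicit structure of $T_N$ must be used.
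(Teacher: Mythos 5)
There is a genuine gap, and it sits at the very point where this lemma is hard. You claim that $|g_2|+|g_3|\le L|t-s|^H$ ``verbatim from the two preceding proofs'' and that these terms are therefore negligible and can be absorbed by Minkowski at the end. In the regime $H>K$ this bound is false: the tail $g_3$ contains $\sum_{m\ge M}\alpha^m\asymp \alpha^M=|t-s|^{K}$, and the $\kappa$-sum in $g_2$ is likewise $\asymp\alpha^M$ (since $\tau>H>K$ forces $\alpha b^\tau>1$, the geometric sum is dominated by its last term). So the error terms are of \emph{exactly the same order} $|t-s|^{K}$ as the target lower bound, and no triangle-inequality absorption can work. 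This is precisely why the paper does not argue in $L^2$: it shows the combined error is at most $\tfrac12\alpha^{M-L_4}$ for some constant $L_4$ and beats it \emph{pointwise} by the nonnegative indicator sum, obtaining $g\ge\tfrac12\alpha^m$ on $[\{b^ms\},\{b^mt\}]$ for $m\le M-L_4$; for $H<1/2$ one then applies \Cref{lemma:Hardy--Littlewood inequality} to that single near-finest scale (an interval of constant length $\asymp b^{-L_4}$ on which $g\gtrsim\alpha^M$), which already yields $|t-s|^{2K}$. Your intuition that a single scale near $m=M$ suffices is correct, but without the pointwise treatment of the bridge correction the argument does not close.

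Two further problems. First, $\alpha b^H=b^{H-K}>1$ (not $<1$ as you assert), so the sum-of-variances computation you dismiss as ``too small'' actually evaluates to $\asymp|t-s|^{2K}$, being dominated by its last term; the reason this simple bound still does not finish the $H\ge1/2$ case is not its size but the presence of the (possibly negative) constant part of $g$, which destroys the positivity of all cross-covariances. Second, the hypothesis $K>2H-1$ is not about any ``upper-bound companion estimate'': it enters only for $H\ge 1/2$, where the paper isolates the scale $M-L_5$, bounds the harmful cross term between that increment and the constant part by $L\alpha^{2M-L_5}b^{-L_5}$ via the mean-value theorem, and needs $\alpha b^{2H-1}<1$ (equivalently $K>2H-1$) so that the main term $\tfrac1L\alpha^{2M-2L_5}b^{-2L_5H}$ wins for $L_5$ large. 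Your proposal never carries out this step, and the claim that the relevant super-level sets have measure $\ge 1-2b^{-N}$ is also incorrect (the set $\{g_1\ge\alpha^m\}$ has measure $O(b^{m-M})$, which is of constant order only for $m$ within $O(1)$ of $M$).
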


\begin{proof}
We fix $t,s\in T_N$ and $M\in\bN_0$ with $b^{-M-1}<t-s\leq b^{-M}$, where $M>N$ is large and will be determined later. The central fact used here is $\{b^ms\}<\{b^mt\}$ for $0\leq m<M-N$, because otherwise $\{b^mt\}+1-\{b^ms\}=|t-s|b^{m}\leq b^{m-M}<b^{-N}$, contradicting $\{b^ms\},\{b^mt\}\in[b^{-N},1-b^{-N}]$. Note also that for $0\leq m<M-N$, $\{b^mt\}-\{b^ms\}=|t-s|b^m$.

Recall from the proof of \Cref{MC} that 
$Y(t)-Y(s)=\int_0^1g(x)\,dW_H(x)$ where
\[g(x):=\sum_{m=0}^{\infty}\alpha^m\bone_{[\{b^ms\},\{b^mt\}]}(x)-\sum_{m=0}^\infty \alpha^m (\kk(\{b^mt\})-\kk(\{b^ms\})).\]
First,
\begin{align*}\left|\sum_{m=0}^\infty \alpha^m (\kk(\{b^mt\})-\kk(\{b^ms\}))\right|\leq L \sum_{m=0}^{M-N-1} \alpha^m(b^{m}|t-s|)^{H}+L\sum_{m=M-N}^\infty \alpha^m\leq L|t-s|^{-\log_b(\alpha)}.\end{align*}
Similarly,
\[\bigg|\sum_{m=M-N}^\infty \alpha^m\bone_{[\{b^ms\},\{b^mt\}]}(x)\bigg|\leq L|t-s|^{-\log_b(\alpha)}.\]
Here, $L$ depends on $\kappa$, $\alpha$, and $N$, but not on $M$. Thus there exists $L_4>N$ such that 
\[2\,\bigg|-\sum_{m=0}^\infty \alpha^m (\kk(\{b^mt\})-\kk(\{b^ms\}))+\sum_{m=M-N}^\infty \alpha^m\bone_{[\{b^ms\},\{b^mt\}]}(x)\bigg|\leq \alpha^{M-L_4}.\]
Since $L_4$ does not depend on $M$, we may choose $M$ so that $M>L_4$. Then, for $0\leq m\leq M-L_4$ and $x\in[\{b^ms\},\{b^mt\}]$, 
\[g(x)\geq \sum_{k=0}^{M-L_4}\alpha^k\bone_{[\{b^ks\},\{b^kt\}]}(x)-\frac{1}{2}\alpha^m\geq \frac{1}{2}\alpha^m.\]


Let us consider first the case $0<H<1/2$. By Lemma \ref{lemma:Hardy--Littlewood inequality} and our previous observation that $\{b^{M-L_4}t\}-\{b^{M-L_4}s\}=b^{M-L_4}|t-s|$,
\[\bE[|Y(t)-Y(s)|^2]\geq \frac{1}{L}\left(\int_0^1g^{1/H}\,dx\right)^{2H}\geq \frac{1}{L}\left(b^{M-L_4}|t-s|\left(\frac{\alpha^m}{2}\right)^{1/H}\right)^{2H}\geq \frac{1}{L}|t-s|^{-2\log_b(\alpha)}.\]
Now we consider the case $1/2\leq H<1$, where we suppose that $\alpha b^{2H-1}<1$. Consider a large number $L_5>L_4$ to be determined, and let $$h(x):=g(x)-\frac{1}{2}\alpha^{M-L_5}\bone_{[\{b^{M-L_5}s\},\{b^{M-L_5}t\}]}(x).$$
Therefore, by expanding the square,
\begin{align*}
&\hspace{0.5cm}\bE\bigg[\bigg(\int_0^1gdW_H\bigg)^2\bigg]\\
&\geq \bE\bigg[\bigg(\int_0^1\frac{1}{2}\alpha^{M-L_5}\bone_{[\{b^{M-L_5}s\},\{b^{M-L_5}t\}]}dW_H\bigg)^2\bigg]\\
&\hspace{3cm}-\bE\bigg[\bigg(\int_0^1\alpha^{M-L_5}\bone_{[\{b^{M-L_5}s\},\{b^{M-L_5}t\}]}dW_H\bigg)\bigg(\int_0^1hdW_H\bigg)\bigg]\\
&\geq\frac{1}{L}\alpha^{2M-2L_5}b^{2(M-L_5)H}|t-s|^{2H}-\alpha^{M-L_5}\sup|h|\bE[W_H(1)(W_H(\{b^{M-L_5}t\})-W_H(\{b^{M-L_5}s\}))]\\
&\geq \frac{1}{L}\alpha^{2M-2L_5}b^{-2L_5H}-L\alpha^{2M-L_5}b^{-L_5},
\end{align*}
where in the second inequality we used that the increments of $\bW_H$ are positively correlated. Since $\alpha b^{2H-1}<1$, for $L_5$ large enough we have $\alpha^{-2L_5}b^{-2L_5H}>L\alpha^{-L_5}b^{-L_5}$, which yields that
\[\bE\bigg[\bigg(\int_0^1gdW_H\bigg)^2\bigg]\geq \frac{1}{L}\alpha^{2M}= \frac{1}{L}|t-s|^{2K}.\]
This finishes the proof.
\end{proof}

\section{Proofs of the main results}\label{main proofs section}

In this section, we prove Theorem \ref{phi} in Section \ref{s3}, Theorems \ref{unicty}, \ref{modcty}, and \ref{dim} in Section \ref{s2}, and finally Theorem \ref{thm:maximum} in Section \ref{s4}.

\subsection{\texorpdfstring{$\Phi$}{}-variation}\label{s3}

We first prove a general result for the $\Phi$-variation of Gaussian processes, extending Theorem 4 of \cite{kawada1973variation} to processes with non-stationary increments. A corresponding but informal discussion was initiated in Section 10.6 of \cite{MarcusRosen}, while no proofs or precise statements were given. Here, we provide a formal theorem with a detailed proof that requires weaker conditions on the covariance structure.

\begin{theorem}Consider a centered Gaussian process $(X(t))_{t\in[0,1]}$ with
\begin{align}
\frac{1}{L}\sigma(|h|)\leq \n{X(t+h)-X(t)}_2\leq L\sigma(|h|)\label{s}
\end{align}
for all $t,t+h\in[0,1]$, where $\sigma$ is concave and regularly varying at $0$ with index $H\in(0,1)$. Suppose $\Psi(x):=\sigma(x)(\log\log(1/x))^{1/2}$ is strictly increasing and denote by $\Phi$ its inverse, then
\[
\bP\left(\frac{1}{L}<v_\Phi(\bX)<\infty\right)=1
\]for this choice of $\Phi$. \label{t}

\end{theorem}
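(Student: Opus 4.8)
The plan is to adapt the classical argument for the $\Phi$-variation of Gaussian processes with stationary increments (Theorem 4 of \cite{kawada1973variation}, or Theorem 10.3.2 of \cite{MarcusRosen}) to our non-stationary setting, exploiting the fact that the two-sided bound \eqref{s} gives us uniform control on the increment variances without requiring exact self-similarity. The key observation is that $\Phi$, being the inverse of $\Psi(x)=\sigma(x)(\log\log(1/x))^{1/2}$, is itself regularly varying at $0$ with index $1/H$, and one has $\Phi(\Psi(x))=x$; this lets us translate statements about $\sum\Phi(|X(t_i)-X(t_{i-1})|)$ into statements about sums over the $b$-adic (or dyadic) mesh.

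For the \emph{upper bound} $v_\Phi(\bX)<\infty$ a.s., I would fix a refining sequence of partitions (say dyadic) and first show that for any partition $0=t_0<\cdots<t_n=1$, the sum $\sum_i\Phi(|X(t_i)-X(t_{i-1})|)$ is controlled, up to a constant, by the analogous sum over the dyadic partition of comparable mesh; this uses concavity and regular variation of $\sigma$ (hence convexity-type estimates for $\Phi$) to absorb the ``coarsening'' error, a standard superadditivity argument. Then the problem reduces to bounding $\sum_{k}\Phi(|X(k2^{-m})-X((k-1)2^{-m})|)$ uniformly in $m$. Here I would invoke the uniform modulus of continuity type estimate: by Fernique/Dudley together with \eqref{s}, almost surely $\sup_k|X(k2^{-m})-X((k-1)2^{-m})|\le L\sigma(2^{-m})\sqrt{m}$ for all large $m$, and more importantly a Borel--Cantelli / Gaussian concentration argument shows that for most increments the ratio $|X(k2^{-m})-X((k-1)2^{-m})|/\sigma(2^{-m})$ is of order $\sqrt{\log\log(1/2^{-m})}$ rather than $\sqrt{m}$. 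Plugging into $\Phi$ and summing over $k=1,\dots,2^m$ gives a bound uniform in $m$, because $2^m\Phi(\sigma(2^{-m})\sqrt{\log\log 2^m})\asymp 2^m\cdot 2^{-m}=O(1)$ by the definition of $\Phi$ as the inverse of $\Psi$.

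For the \emph{lower bound} $v_\Phi(\bX)\ge 1/L$ a.s., I would exhibit, for a suitable deterministic subsequence $m_j\uparrow\infty$, a random partition along which the sum is bounded below. The idea is to use a sparse block construction: partition $[0,1]$ into $\asymp 2^{m_j}$ dyadic intervals, and in each sufficiently separated block look for an increment where $|X(k2^{-m_j})-X((k-1)2^{-m_j})|\ge \tfrac1L\sigma(2^{-m_j})\sqrt{\log\log 2^{m_j}}=\tfrac1L\Psi(2^{-m_j})$. By the lower bound in \eqref{s} and Gaussian small-ball (anti-concentration) estimates, each such event has probability bounded below by a power of $1/m_j$, and using approximate independence of well-separated increments (here strong local non-determinism, or in its absence a direct covariance decay estimate from \eqref{s} and concavity) together with Borel--Cantelli, almost surely for infinitely many $j$ a positive fraction of blocks contains such a large increment. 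Retaining only those increments in the partition yields $\sum_i\Phi(|X(t_i)-X(t_{i-1})|)\ge (\text{fraction})\cdot 2^{m_j}\cdot \Phi(\tfrac1L\Psi(2^{-m_j}))\ge 1/L$.

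The main obstacle will be the lower bound, specifically handling the dependence between increments without assuming strong local non-determinism. For the upper bound, the non-stationarity costs essentially nothing because \eqref{s} is already uniform in $t$, and the whole argument is a bookkeeping exercise built on Gaussian concentration. For the lower bound, the classical proof leans on stationarity to get i.i.d.\ (or strongly mixing) increments; here I expect the correct substitute is a Gaussian correlation/decoupling argument showing that the covariance between increments over $b$-adic intervals separated by several scales is small relative to their variances — which follows from the concavity of $\sigma$ and the bound \eqref{s} via an elementary computation — so that a second-moment or Paley--Zygmund argument on the count of ``large'' increments goes through. A secondary technical point is verifying that $\Phi$ inherits enough regularity (regular variation of index $1/H$, and a doubling/quasi-convexity property) from the hypotheses on $\sigma$ so that the coarsening step in the upper bound is legitimate; this should follow from Karamata-type representations for regularly varying functions.
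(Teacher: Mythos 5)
Your upper bound is broadly in the spirit of the paper's argument (split increments by scale, control the bulk via $\Phi(L\Psi(\Delta))\lesssim\Delta$, handle the rest by Gaussian concentration plus Borel--Cantelli), but as written it elides the hard bookkeeping: the supremum is over \emph{all} partitions, whose intervals mix scales, so one must group partition intervals by scale, dominate each by the oscillation over a covering interval of that scale, count the exceptional covering intervals at scale $e^{-m}$ (the paper shows this count is eventually at most $e^m m^{-2-2/H}$), and pay for each exceptional increment with the \emph{uniform} modulus $\sigma(h)\sqrt{-\log h}$ rather than $\Psi$. Summing $2^m\Phi(\sigma(2^{-m})\sqrt{\log\log 2^m})$ over the ``typical'' increments does not dispose of the exceptional ones, whose individual contribution is larger by a power of $m$ after applying $\Phi$.

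The genuine gap is in the lower bound. You assert that approximate independence of well-separated increments ``follows from the concavity of $\sigma$ and the bound \eqref{s} via an elementary computation.'' It does not. By polarization,
\[
\bE[(X(t)-X(s))(X(v)-X(u))]=\tfrac12\big(d(t,u)^2+d(s,v)^2-d(t,v)^2-d(s,u)^2\big),\qquad d(a,b):=\nn{X(a)-X(b)}_2,
\]
and in the stationary case the decay of this covariance comes from \emph{cancellation} among the four terms. Under \eqref{s} each term is pinned down only up to a multiplicative factor $L^2$, so the computation yields a bound of order $\sigma(r)^2$ with $r$ the separation, which for $r$ much larger than the increment length \emph{dominates} the increment variances $\asymp\sigma(2^{-m})^2$ rather than being dominated by them; no smallness of the normalized correlation follows. (For the Wiener--Weierstrass bridge itself this is precisely the difficulty: the paper lists strong local non-determinism of $\bY$ as an open problem.) Hence your second-moment/Paley--Zygmund count of large increments is not justified by the hypotheses. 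The paper circumvents this entirely: it first establishes the pointwise law of the iterated logarithm $\limsup_{t\to s}|X(t)-X(s)|/\Psi(|t-s|)=C(s)\ge 1/L$ a.s.\ for every $s$ (Lemma \ref{lg}, obtained from the two-sided bound \eqref{s} via Marcus--Rosen), and then converts it into a lower bound on $s_\Phi$ by Fubini plus the \emph{Vitali covering theorem}: almost surely, almost every $t$ admits arbitrarily small $s$ with $\Phi(|X(t+s)-X(t)|)\ge(1-\eps)s/L^{1/H}$, and a disjoint Vitali subfamily of such intervals of total length at least $1-\eps$ gives $s_\Phi(\bX,\kappa)\ge(1-\eps)^2/L^{1/H}$. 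No independence or decorrelation between distinct increments is used anywhere. To rescue your route you would have to prove a decorrelation or local non-determinism statement that genuinely goes beyond \eqref{s}.
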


We first apply the result to prove Theorem \ref{phi}. In reality, it is difficult to write down explicit formulas of $\Phi$ given $\sigma$, but the essential point of $\Phi=\Psi^{-1}$ is to make the following equation \eqref{psi} hold. By Proposition 1.5.15 of \cite{bingham1989regular}, $\Phi$ is regularly varying with index $1/H$ at $0$. Hence, for any $c>0$,
\begin{align}
\lim_{v\to 0}\frac{\Phi(c\Psi(v))}{v}=\lim_{v\to 0}\frac{\Phi(c\Psi(v))}{\Phi(\Psi(v))}=c^{1/H}.\label{psi}\end{align}
In the proof of \Cref{t}, we will only apply $\Phi=\Psi^{-1}$ indirectly through (\ref{psi}).

\begin{proof}[Proof of Theorem \ref{phi}]
Let us first consider the case $H>K$. By Proposition A.1 of \cite{schied2024weierstrass}, the sample paths of $Y$ are H\"{o}lder with exponent $K$, and so $v_\Phi(\bY)<\infty$ a.s. On the other hand,   Theorem 2.3(a) of \cite{schied2024weierstrass} implies that
$
v_\Phi(\bY)>0\text{ a.s.}
$
This proves (iii) of \Cref{phi}. 

Now we turn to parts (i) and (ii). Given the covariance estimates in \Cref{c} and by chopping $[0,1]$ into intervals of length $<b^{-L}$  (because the estimates in \Cref{MC} and \Cref{MC2} apply only for $|s-t|<b^{-L}$), it suffices to prove (\ref{psi}) in the cases $H\leq K$ with our choices of $\Phi$ from \eqref{p2} and \eqref{p1}. For $H<K$ we may refer to (12.42) of \cite{dudley2011concrete}. The case $H=K$ is settled by a direct computation:
\begin{align*}
&\hspace{0.5cm}\lim_{v\to 0}\frac{\Phi_H(c\Psi_H(v))}{v}\\
&=\lim_{v\to 0}\left(\frac{c\sqrt{2v^{2H}\log(1/v)\log\log(1/v)}}{\sqrt{-2\log(c\sqrt{2v^{2H}\log(1/v)\log\log(1/v)})\log(-\log(c\sqrt{2v^{2H}\log(1/v)\log\log(1/v)}))/H}}\right)^{1/H}v^{-1}\\
&=c^{1/H}\lim_{v\to 0}\left(\frac{2\log(1/v)\log\log(1/v)}{-2\log(c\sqrt{2v^{2H}\log(1/v)\log\log(1/v)})\log(-\log(c\sqrt{2v^{2H}\log(1/v)\log\log(1/v)}))/H}\right)^{1/2H}\\
&=c^{1/H}\lim_{v\to 0}\left(\frac{-2\log(v)\log(-\log v)}{-2\log(v^H)\log(-\log(v^H))/H}\right)^{1/2H}\\
&=c^{1/H}.
\end{align*}
This completes the proof.
\end{proof}

\Cref{t} is a  consequence of \Cref{coro:d} below, by following the proof of Corollary 12.23 in \cite{dudley2011concrete}. For a partition  $\kappa=\{0=t_0<t_1<\dots<t_n=1\}$ of $[0,1]$, we denote its mesh by $|\kappa|$ and define
\[
s_\Phi(f,\kappa):=\sum_{i=1}^n\Phi(|f(t_i)-f(t_{i-1})|).
\]
In the following, consider a Gaussian process $\bX$ satisfying the conditions in \Cref{t}.

\begin{theorem}\label{phi2}
Under the above conditions,
\[
\bP\left(\frac{1}{L}\leq\lim_{\delta\to 0}\sup\{s_\Phi(\bX,\kappa):|\kappa|<\delta\}\leq L\right)=1.
\]
\end{theorem}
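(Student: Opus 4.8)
\textbf{Proof proposal for Theorem \ref{phi2}.}

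The plan is to prove the two inequalities separately, following the structure of the classical argument for Gaussian processes with stationary increments (Theorem 10.3.2 of \cite{MarcusRosen} / Corollary 12.23 of \cite{dudley2011concrete}), but everywhere replacing explicit self-similarity or stationarity of increments by the two-sided bound \eqref{s} together with the regular variation of $\sigma$. Throughout, fix a small scale $b$-adic-type partition only implicitly; the key tool is that, by \eqref{s} and regular variation at $0$ of $\sigma$ with index $H$, there is for every $c>0$ a function $\Phi=\Psi^{-1}$ with $\Psi(x)=\sigma(x)\sqrt{\log\log(1/x)}$ satisfying \eqref{psi}, i.e.\ $\Phi(c\Psi(v))\sim c^{1/H}v$ as $v\to 0$.

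\emph{Upper bound.} I would first show $\limsup_{\delta\to0}\sup\{s_\Phi(\bX,\kappa):|\kappa|<\delta\}\le L$ a.s. The idea is a union-bound/Borel--Cantelli argument over dyadic (or $b$-adic) refinements. For a partition $\kappa$ with $|\kappa|<\delta$, group its points according to the dyadic scale $2^{-j}$ of the corresponding increments: points $t_i$ with $|t_i-t_{i-1}|\in[2^{-j-1},2^{-j})$ contribute at most $\sum_i \Phi(|X(t_i)-X(t_{i-1})|)$, and there are at most $O(2^{j})$ such increments. Using the Gaussian tail bound $\bP(|X(t+h)-X(t)|>\lambda \sigma(|h|))\le 2e^{-\lambda^2/(2L^2)}$ from \eqref{s}, together with a chaining / Lévy-type modulus-of-continuity estimate, one shows that almost surely, for all sufficiently fine partitions, $|X(t_i)-X(t_{i-1})|\le C\,\sigma(|t_i-t_{i-1}|)\sqrt{\log(1/|t_i-t_{i-1}|)}$, which is somewhat larger than $\Psi$. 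The more delicate point is that over a \emph{single} partition with $n$ pieces, most increments are controlled at the finer level $\Psi(|h|)=\sigma(|h|)\sqrt{\log\log(1/|h|)}$; one sums $\sum_i \Phi(C\Psi(|t_i-t_{i-1}|)) \le L\sum_i |t_i-t_{i-1}|$ by \eqref{psi}, and the right side is bounded by $L\cdot 1$ since the $t_i$ partition $[0,1]$. The exceptional increments (those violating the $\Psi$-bound) must be handled by showing they are rare enough — here is where one needs the concentration of the supremum and a careful Borel--Cantelli over scales — so that their total $\Phi$-contribution is also $O(1)$.

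\emph{Lower bound.} For $\liminf_{\delta\to0}\sup\{s_\Phi(\bX,\kappa):|\kappa|<\delta\}\ge 1/L$ a.s., I would construct, for each small $\delta$, an explicit partition realizing a large $s_\Phi$. Partition $[0,1]$ into $n\sim 1/\delta$ equal subintervals of length $h=\delta$. On each, by \eqref{s} and the Gaussian lower tail (Sudakov-type / anti-concentration: $\bP(|X(t+h)-X(t)|\ge \lambda\sigma(h))\ge c(\lambda)>0$ for fixed $\lambda$), the increment is at least $\Psi(h)=\sigma(h)\sqrt{\log\log(1/h)}$ with a probability bounded below. Since $\log\log(1/h)$ grows, one wants a \emph{many-intervals} second-moment or independence-type argument: although the increments are not independent, strong local nondeterminism is not assumed here, so instead I would use a blocking argument — partition further, keep a sparse subfamily of increments that are "nearly independent" in the sense of having small conditional variance contributions, and apply a Borel--Cantelli or Paley--Zygmund argument to guarantee that a positive fraction of the $n$ increments exceed $\frac{1}{L}\Psi(h)$. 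Then $s_\Phi(\bX,\kappa)\ge (\text{positive fraction of }n)\cdot\Phi(\tfrac1L\Psi(h))\ge \tfrac1L\, n\cdot h = \tfrac1L$ using \eqref{psi} again. Taking $\delta\to0$ along a subsequence and a further Borel--Cantelli gives the a.s.\ lower bound.

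\emph{Main obstacle.} The crux — and the place where the non-stationarity really bites — is the lower bound: without stationary increments one cannot directly invoke the ergodic/independence structure used in \cite{kawada1973variation}, and without strong local nondeterminism (explicitly \emph{not} assumed in \Cref{t}) one cannot cleanly decouple the increments. The resolution I would pursue is to exploit only the \emph{upper} bound $\n{X(t+h)-X(t)}_2\le L\sigma(|h|)$ to control covariances between well-separated increments crudely (they cannot be too positively correlated relative to their variances, by Cauchy--Schwarz on a telescoped decomposition), combined with the lower bound to keep each individual increment nondegenerate; this suffices for a Paley--Zygmund argument over a thinned collection of increments, at the cost of a worse constant $L$ — which is acceptable since the theorem only claims a two-sided bound up to constants. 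Packaging this thinning uniformly over $\delta\to0$, and then transferring from \Cref{phi2} to \Cref{t} exactly as in the proof of Corollary 12.23 of \cite{dudley2011concrete} (the $\sup$ over partitions of bounded mesh decreasing to the genuine $\Phi$-variation), completes the argument.
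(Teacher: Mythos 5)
Your upper bound follows the paper's argument in outline: split the increments of a given partition into those with $|\Delta_iX|\le L\Psi(\Delta_i)$, whose $\Phi$-contribution is $O(1)$ by \eqref{psi} since the lengths $\Delta_i$ sum to at most $1$, and the exceptional ones, whose count per scale is controlled by a concentration bound for the oscillation over small intervals (\Cref{p}) plus Borel--Cantelli. You leave the second half vague, but the missing bookkeeping (the paper bounds each exceptional increment by the \emph{uniform} modulus $\widetilde{\Psi}(x)=\sigma(x)\sqrt{-\log x}$ from \eqref{global} and absorbs the extra logarithmic factor via the regular-variation estimate \eqref{2rv} against the count $Z_m\le e^m m^{-2-2/H}$) is routine once the split is in place.

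The lower bound, however, has a genuine gap. You propose to take the \emph{uniform} partition of mesh $h=\delta$ and show that a positive fraction of its $n\sim 1/h$ increments exceed $\frac{1}{L}\Psi(h)=\frac{1}{L}\sigma(h)\sqrt{\log\log(1/h)}$, via Paley--Zygmund after decoupling. This cannot work, and the failure has nothing to do with dependence: for a single Gaussian increment with $\nn{\Delta_iX}_2\asymp\sigma(h)$, the probability of exceeding $\frac{1}{L}\Psi(h)$ is of order $(\log(1/h))^{-c}$ and tends to $0$, and in fact $\bE[\Phi(|\Delta_iX|)]\asymp h\,(\log\log(1/h))^{-1/(2H)}$ by \eqref{psi}, so $\bE[s_\Phi(\bX,\kappa_h)]\asymp(\log\log(1/h))^{-1/(2H)}\to0$ for the uniform partition $\kappa_h$ --- even under full independence no first- or second-moment argument at a single scale can produce a lower bound bounded away from zero. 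The $\sqrt{\log\log}$ in $\Psi$ is an iterated-logarithm normalization, attained only along exceptional, point-dependent scales. The paper's route is therefore different in an essential way: it first proves the exact \emph{local} modulus $\limsup_{t\to s}|X(t)-X(s)|/\Psi(|t-s|)\ge 1/L_6$ at every $s$ (\Cref{lg}, via the zero-one law), so that for a.e.\ $(t,\omega)$ there exists some $s\in(0,\delta)$, depending on $t$ and $\omega$, with $\Phi(|X(t+s)-X(t)|)\ge(1-\ee)s/L_6^{1/H}$; Fubini and the Vitali covering theorem then extract a finite disjoint family of such intervals covering measure at least $1-\ee$, and summing over this family --- with \emph{varying} lengths $s_j$ --- gives $s_\Phi\ge(1-\ee)^2/L_6^{1/H}$. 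Replacing your uniform-mesh construction by this local-LIL-plus-Vitali argument is not a technical refinement but the key missing idea.
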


\begin{corollary}\label{coro:d}
There exists a constant $C\in (0,\infty)$ such that
\[
\bP\left(\lim_{\delta\to 0}\sup\{s_\Phi(\bX,\kappa):|\kappa|<\delta\}=C\right)=1.
\]
\end{corollary}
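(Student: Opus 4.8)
\noindent\textbf{Proof plan for \Cref{coro:d}.} By \Cref{phi2} the monotone (hence existing) limit
\[
V:=\lim_{\delta\to0}\,\sup\{s_\Phi(\bX,\kappa):|\kappa|<\delta\}
\]
satisfies $1/L\le V\le L$ almost surely, so it only remains to show that $V$ is almost surely equal to a deterministic constant. The plan is to view $V$ as a Borel functional of $\bX$ — which, having a continuous version, we regard as a $C[0,1]$-valued centered Gaussian variable — that is invariant under shifts of the path by Cameron--Martin elements, and then to invoke the zero-one law for Gaussian measures. Borel measurability is routine: restricting to partitions with rational division points (legitimate by path continuity) realizes $V$ as a countable limit of countable suprema of continuous functionals on $C[0,1]$. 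So the substance lies in the invariance.

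Let $\mathbb H$ denote the Cameron--Martin space of $\bX$. First I would check that $V$ annihilates $\mathbb H$. For $h\in\mathbb H$ the reproducing-kernel estimate together with \eqref{s} gives $|h(t)-h(s)|\le\|h\|_{\mathbb H}\,\nn{X(t)-X(s)}_2\le L\|h\|_{\mathbb H}\,\sigma(|t-s|)$ for all $s,t$. Since $\Phi=\Psi^{-1}$ with $\Psi(u)=\sigma(u)(\log\log(1/u))^{1/2}$, we have $L\|h\|_{\mathbb H}\sigma(u)=o(\Psi(u))$ as $u\to0^+$; because $\Phi$ is regularly varying of positive index $1/H$ at $0$, this forces $\Phi(L\|h\|_{\mathbb H}\sigma(u))=o(\Phi(\Psi(u)))=o(u)$. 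Hence, for any partition $\kappa=\{0=t_0<\dots<t_n=1\}$,
\[
s_\Phi(h,\kappa)\le\sum_{i=1}^n\Phi\big(L\|h\|_{\mathbb H}\sigma(t_i-t_{i-1})\big)\le\Big(\sup_{0<u\le|\kappa|}\frac{\Phi(L\|h\|_{\mathbb H}\sigma(u))}{u}\Big)\sum_{i=1}^n(t_i-t_{i-1}),
\]
which tends to $0$ as $|\kappa|\to0$, so $V(h)=0$.

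Next I would prove that $V(x+h)=V(x)$ for every $x\in C[0,1]$ and $h\in\mathbb H$. Fix $\varepsilon>0$ and a partition, and write $\Delta_if:=f(t_i)-f(t_{i-1})$. Splitting the indices according to whether $|\Delta_ih|\le\varepsilon|\Delta_ix|$ (forcing $|\Delta_i(x+h)|\le(1+\varepsilon)|\Delta_ix|$) or $|\Delta_ih|>\varepsilon|\Delta_ix|$ (forcing $|\Delta_i(x+h)|\le(1+\varepsilon^{-1})|\Delta_ih|$), monotonicity of $\Phi$ gives
\[
s_\Phi(x+h,\kappa)\le\sum_{i}\Phi\big((1+\varepsilon)|\Delta_ix|\big)+\sum_{i}\Phi\big((1+\varepsilon^{-1})|\Delta_ih|\big).
\]
Passing to $\sup_{|\kappa|<\delta}$ and then $\delta\to0$, using the regular variation of $\Phi$ (so $\Phi((1+\varepsilon)u)/\Phi(u)\to(1+\varepsilon)^{1/H}$ and likewise with $1+\varepsilon^{-1}$) together with $V(h)=0$, one obtains $V(x+h)\le(1+\varepsilon)^{1/H}V(x)$. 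Letting $\varepsilon\to0$ and interchanging the roles of $x$ and $x+h$ (replacing $h$ by $-h$) gives $V(x+h)=V(x)$. Therefore, for each $c\in\bR$, the Borel set $\{V\le c\}\subseteq C[0,1]$ is invariant under all Cameron--Martin shifts, so the zero-one law for Gaussian measures yields $\bP(V\le c)\in\{0,1\}$. Hence $V$ equals the deterministic constant $C:=\inf\{c:\bP(V\le c)=1\}$ almost surely, and $C\in[1/L,L]\subset(0,\infty)$ by \Cref{phi2}. Finally, \Cref{t} follows from \Cref{phi2} and \Cref{coro:d} exactly as Corollary~12.23 is deduced in \cite{dudley2011concrete}.

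The main obstacle is the invariance $V(x+h)=V(x)$. Since $\Phi$ is super-linear, neither $v_\Phi$ nor its small-mesh analogue $V$ is subadditive in the function, so one cannot simply estimate $V(x+h)\le V(x)+V(h)$; the $\varepsilon$-splitting above replaces this by an inequality carrying the factor $(1+\varepsilon)^{1/H}$, which is harmlessly sent to $1$ via the regular variation of $\Phi$. The only other delicate ingredient is that $V$ kills Cameron--Martin functions, which rests on $\Phi(c\,\sigma(u))=o(u)$ — a direct consequence of $\Phi=\Psi^{-1}$ and the extra $(\log\log(1/u))^{1/2}$ built into $\Psi$.
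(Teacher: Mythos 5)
Your argument is correct, but it takes a different route from the paper: the paper disposes of \Cref{coro:d} in one line by citing the zero-one law for the $\Phi$-variation of Gaussian processes (Theorem 1 of \cite{kawada1973variation}), whereas you reconstruct that zero-one law from scratch. Your reconstruction is sound: the reproducing-kernel bound $|h(t)-h(s)|\le\|h\|_{\mathbb H}\nn{X(t)-X(s)}_2$ combined with \eqref{s} and the extra $(\log\log(1/u))^{1/2}$ in $\Psi$ does give $\Phi(c\,\sigma(u))=o(\Phi(\Psi(u)))=o(u)$ (via the uniform convergence/Potter bounds for the regularly varying $\Phi$), so Cameron--Martin elements have vanishing limiting $\Phi$-variation; the $\varepsilon$-splitting correctly circumvents the failure of subadditivity of $s_\Phi$ in the function, with the stray factor $(1+\varepsilon)^{1/H}$ removed by regular variation and a final $\varepsilon\to0$; and measurability plus full Cameron--Martin shift invariance of $\{V\le c\}$ legitimately triggers the Gaussian zero-one law. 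What your approach buys is a self-contained proof that makes transparent exactly which structural features are used (no stationarity of increments, only the two-sided bound \eqref{s} and regular variation of $\sigma$), which is in the spirit of the paper's stated goal of extending \cite{kawada1973variation} beyond stationary increments; what the citation buys is brevity, since Kawada--K\^ono's Theorem 1 is itself proved by essentially this Cameron--Martin invariance argument and already applies without stationarity. The one point worth making explicit if you write this up is the uniformity of the regular-variation estimate $\Phi((1+\varepsilon)y)\le((1+\varepsilon)^{1/H}+\eta)\Phi(y)$ over the increments $y=|\Delta_i x|$, which holds because these are uniformly small once $|\kappa|$ is small, by uniform continuity of the path.
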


\begin{proof}
    This follows from Theorem 1 of \cite{kawada1973variation}.
\end{proof}


The proof of \Cref{phi2} will follow the same path as Theorem 12.22 of \cite{dudley2011concrete}. Let us first present some preparatory lemmas. In the statement of these lemmas, we impose the same assumptions as Theorem \ref{t}.

\begin{lemma}\label{p}
For all $y>1$ and all $0\leq h<h+\delta\leq 1$,
\[\bP\bigg(\sup_{s,t\in[h,h+\delta]}|X(s)-X(t)|>Ly\sigma(\delta)\bigg)\leq L(1+L^{-1})^{-y^2}.\]
\end{lemma}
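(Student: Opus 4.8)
\textbf{Proof proposal for Lemma~\ref{p}.}

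The plan is to reduce the statement to a standard Gaussian maximal inequality (Dudley's entropy bound together with the Borell--TIS or Landau--Shepp--Fernique concentration inequality) applied to the centered Gaussian process $X$ restricted to the interval $[h,h+\delta]$. First I would work on the canonical pseudometric $d(s,t):=\n{X(s)-X(t)}_2$, which by the two-sided bound \eqref{s} satisfies $d(s,t)\asymp\sigma(|s-t|)$ uniformly in $h$. Since $\sigma$ is concave and regularly varying at $0$ with index $H\in(0,1)$, the covering numbers $N([h,h+\delta],d;\eps)$ are controlled: an $\eps$-ball in $d$ contains (up to constants) a Euclidean interval of length $\sigma^{-1}(c\eps)$, so $N([h,h+\delta],d;\eps)\le L\delta/\sigma^{-1}(\eps/L)$, and this quantity is bounded by $L(\sigma(\delta)/\eps)^{1/H+o(1)}$ for $\eps\le L\sigma(\delta)$ by regular variation (Potter bounds). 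Then Dudley's integral $\int_0^{D}\sqrt{\log N(\eps)}\,d\eps$, where $D=\operatorname{diam}_d([h,h+\delta])\le L\sigma(\delta)$, converges and is bounded by $L\sigma(\delta)$, with the constant $L$ independent of $h$ and $\delta$ because all estimates are scale-homogeneous in $\sigma(\delta)$. This gives $\E\big[\sup_{s,t\in[h,h+\delta]}|X(s)-X(t)|\big]\le L_0\sigma(\delta)$ for a universal constant $L_0$.

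Next I would invoke the concentration of measure for the supremum of a Gaussian process: writing $Z:=\sup_{s,t\in[h,h+\delta]}(X(s)-X(t))$, the Borell--TIS inequality gives $\bP(Z>\E Z+r)\le e^{-r^2/(2\varsigma^2)}$, where $\varsigma^2:=\sup_{s,t}\var{X(s)-X(t)}\le L^2\sigma(\delta)^2$. Since $\sup_{s,t}|X(s)-X(t)|=Z$ by symmetry of the increment process, taking $r=(y-1)L_0\sigma(\delta)$ for $y>1$ and absorbing constants, we obtain
\[
\bP\Big(\sup_{s,t\in[h,h+\delta]}|X(s)-X(t)|>yL_0\sigma(\delta)\Big)\le \exp\!\Big(-\frac{(y-1)^2L_0^2\sigma(\delta)^2}{2L^2\sigma(\delta)^2}\Big)=\exp\big(-c_1(y-1)^2\big)
\]
for a constant $c_1>0$ depending only on $H$ and the constant $L$ in \eqref{s}. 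Finally, choosing the ambient constant $L$ in the statement of the lemma large enough (so that $y\mapsto Ly$ dominates $yL_0$ and so that $(1+L^{-1})^{-y^2}\ge e^{-c_1(y-1)^2}$ for all $y>1$, using $\log(1+L^{-1})\le L^{-1}\le c_1/2$ for $L$ large and $(y-1)^2\ge y^2/4$ for $y\ge 2$, handling $1<y<2$ separately by crudely bounding the probability by $1$ or by a fixed constant) yields the claimed bound $L(1+L^{-1})^{-y^2}$.

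The main obstacle I anticipate is making the entropy estimate genuinely \emph{uniform} in $h$ and $\delta$: one must check that the Potter-type bounds coming from regular variation of $\sigma$ (hence of $\sigma^{-1}$) can be applied with constants that do not degrade as $\delta\to0$, and that the Dudley integral $\int_0^{L\sigma(\delta)}\sqrt{\log N(\eps)}\,d\eps$ scales exactly like $\sigma(\delta)$ after the substitution $\eps=\sigma(\delta)u$. This is where concavity of $\sigma$ is used (it guarantees $\sigma(\delta)/\sigma(\delta') \le \delta/\delta'$ type comparisons and keeps the covering numbers from blowing up), and where the index $H<1$ is essential to ensure the integral converges at $0$. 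Once uniformity is secured, the concentration step and the bookkeeping with the constant $L$ are routine; the only care needed there is the elementary inequality relating $\exp(-c_1(y-1)^2)$ to $(1+L^{-1})^{-y^2}$, which is arranged by first enlarging $L$ and then noting the statement is vacuous or trivial for $y$ close to $1$.
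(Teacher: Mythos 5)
Your proposal is correct and follows essentially the same route as the paper: a Dudley entropy bound (uniform in $h,\delta$ thanks to regular variation of $\sigma$) to get $\E\big[\sup_{s,t\in[h,h+\delta]}|X(s)-X(t)|\big]\le L\sigma(\delta)$, followed by Gaussian concentration (Borell--TIS) and elementary bookkeeping with the constant $L$. The only cosmetic difference is that the paper evaluates the entropy integral via the dyadic series $\sum_n 2^{n/2}\sigma(\delta 2^{-2^n})$ and regular variation of $\Psi$, whereas you use covering numbers and Potter bounds; these are equivalent.
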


\begin{proof}
Let us first compute $\bE\big[\sup_{t\in[h,h+\delta]}X(t)\big]$. Since $\Psi$ is regularly varying, there is  $\alpha>0$ such that for $k$ large enough, $\Psi(2^{-k-1})\leq 2^{-\alpha}\Psi(2^{-k})$. If $\delta\in[2^{-N},2^{-N+1})$, we have for $N$ large enough, since $\Psi(x)=\sigma(x)\sqrt{\log\log(1/x)}$ is non-decreasing,
\[
\sum_{n=0}^\infty 2^{n/2}\left(\frac{\sigma(\delta 2^{-2^n})}{\sigma(\delta)}\right)\leq \sum_{n=0}^\infty 2^{n/2}\left(\frac{\Psi(2^{-N+1-2^n})\sqrt{\log N}}{\Psi(2^{-N})\sqrt{\log(N+2^n-1)}}\right)\leq \sum_{n=0}^\infty 2^{n/2}2^{\alpha(1-2^n)}\leq L.
\] By (\ref{s}) and Dudley's entropy bound (Proposition 2.5.1 of \cite{talagrand2022upper}), 
\begin{align*}
\bE\bigg[\sup_{t\in[h,h+\delta]}X(t)\bigg]&\leq L\sum_{n=0}^\infty 2^{n/2}\sigma(\delta 2^{-2^n})\leq L\sigma(\delta).
\end{align*}
Lemma 2.2.1 of \cite{talagrand2022upper} and the fact that $\bX$ is symmetric yield that
\[
\bE\bigg[\sup_{s,t\in[h,h+\delta]}|X(s)-X(t)|\bigg]\leq L\sigma(\delta).
\]
Thus by the Gaussian concentration inequality (Theorem 5.4.3 and Corollary 5.4.5 of \cite{MarcusRosen} applied to a dense subset of $[h,h+\delta]$), since $y>1$,
\begin{align*}
&\hspace{0.5cm}\bP\bigg(\sup_{s,t\in[h,h+\delta]}(X(s)-X(t))>Ly\sigma(\delta)\bigg)\\
&\leq \bP\bigg(\Big|\sup_{s,t\in[h,h+\delta]}(X(s)-X(t))-\bE\Big[\sup_{s,t\in[h,h+\delta]}(X(s)-X(t))\Big]\Big|>\frac{1}{2}Ly\sigma(\delta)\bigg)\\
&\leq L\exp\left(\frac{-(\frac{1}{2}Ly\sigma(\delta))^2}{L\sup_{s,t\in[h,h+\delta]}\bE[|X(t)-X(s)|^2]}\right)\\
&\leq L(1+L^{-1})^{-y^2},
\end{align*}as required, where the last step follows from (\ref{s}). This proves the lemma while noting that the absolute value on $X(s)-X(t)$ can be removed by symmetry.
\end{proof}

\begin{lemma}\label{lg}
There exist constants $C(s)\in[\frac{1}{L},L],~s\in[0,1]$ such that
\begin{align}
\bP\left(\limsup_{t\in[0,1],\,t\to s}\frac{|X(t)-X(s)|}{\Psi(|t-s|)}=C(s)\right)=1.\label{local}
\end{align}
In addition, \begin{align}
\bP\left(\limsup_{\delta\to 0}\sup_{|t-s|<\delta}\frac{|X(t)-X(s)|}{\widetilde{\Psi}(|t-s|)}\leq L\right)=1,\label{global}
\end{align}where $\widetilde{\Psi}(x)=\sigma(x)\sqrt{-\log x}$.
\end{lemma}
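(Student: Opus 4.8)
Both statements are obtained by discretizing into dyadic scales, feeding the Fernique–type tail bound of \Cref{p} into the Borel–Cantelli lemmas, and interpolating between consecutive dyadic scales using that $\sigma$ (hence $\Psi$ and $\widetilde\Psi$) is regularly varying at $0$ with positive index. For \eqref{global} I would fix $N\in\bN$, cover $[0,1]$ by $O(2^N)$ closed intervals of length $2^{-N}$, and note that any $s,t$ with $|s-t|<2^{-N}$ lie in a common subinterval of length $2^{-N+1}$; applying \Cref{p} to each such subinterval with $y=\sqrt{cN}$ and a union bound gives
\[
\bP\Big(\sup_{|s-t|<2^{-N}}|X(s)-X(t)|>L\sqrt{cN}\,\sigma(2^{-N+1})\Big)\le L\,2^N(1+L^{-1})^{-cN},
\]
which is summable in $N$ once $c$ is large. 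Borel–Cantelli together with $\widetilde\Psi(2^{-N})\asymp\sigma(2^{-N})\sqrt N$ and the (eventual) monotonicity and regular variation of $\widetilde\Psi$ then yield \eqref{global} after interpolating between dyadic scales. The upper bound $C(s)\le L$ in \eqref{local} is the same argument carried out at a fixed $s$: applying \Cref{p} to the single interval $[s-2^{-n},s+2^{-n}]\cap[0,1]$ with $y=\sqrt{c\log n}$ gives a probability $\le Ln^{-c'}$, summable for $c$ large, so with $\Psi(2^{-n})\asymp\sigma(2^{-n})\sqrt{\log n}$ one obtains $\limsup_{t\to s}|X(t)-X(s)|/\Psi(|t-s|)\le L$ a.s. Since this $\limsup$ is a tail functional of the Gaussian process, the zero–one law (\Cref{lemma:0-1}) shows it equals an a.s.\ constant $C(s)\in[0,L]$.

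\textbf{Lower bound $C(s)\ge 1/L$.} It remains to show $C(s)>0$ quantitatively. Fix $s$, put $\delta_k:=e^{-k^2}$ (so $\delta_{k-1}/\delta_k\to\infty$), and set $A_k:=\{|X(s+\delta_k)-X(s)|>c_0\Psi(\delta_k)\}$ for a small constant $c_0$ with $L^2c_0^2<1$. By the lower bound in \eqref{s} and a standard Gaussian tail estimate, $\bP(A_k)\ge \tfrac1L(\log k)^{-1/2}k^{-L^2c_0^2}$, so $\sum_k\bP(A_k)=\infty$; the task is to upgrade this to $\bP(A_k\text{ i.o.})=1$. I would do so by conditioning sequentially on $\mathcal F_{k-1}:=\sigma(X(s+\delta_m),\dots,X(s+\delta_{k-1}))$: since for jointly Gaussian variables the conditional variance is deterministic and a centred Gaussian minimizes $\bP(|\cdot|\le a)$ (Anderson's inequality), one gets, for $m\le n$,
\[
\bP\Big(\bigcap_{k=m}^nA_k^c\Big)\le\prod_{k=m}^n(1-q_k),\qquad q_k:=\bP\big(|N(0,v_k)|>c_0\Psi(\delta_k)\big),\quad v_k:=\var{X(s+\delta_k)-X(s)\mid X(s+\delta_m),\dots,X(s+\delta_{k-1})}.
\]
Granting the non-degeneracy estimate $v_k\ge \sigma(\delta_k)^2/L$ for all $k$, one has $q_k\ge\tfrac1L(\log k)^{-1/2}k^{-L^2c_0^2}$, hence $\sum_kq_k=\infty$, $\prod_{k\ge m}(1-q_k)=0$, $\bP(\bigcap_{k\ge m}A_k^c)=0$ for every $m$, and therefore $\bP(A_k\text{ i.o.})=1$; this gives $C(s)\ge c_0=:1/L$, and combined with the first paragraph completes \eqref{local}.

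\textbf{Main obstacle.} The hard part is establishing the conditional-variance bound $v_k\ge\sigma(\delta_k)^2/L$. Because the conditioning points $s+\delta_j$ ($j<k$) all lie to the right of $[s,s+\delta_k]$ at distance $\ge\delta_{k-1}-\delta_k\gg\delta_k$, this is a weak, ``well-separated-scales'' form of strong local non-determinism, which is not among the hypotheses but should follow from them. The two-sided bound \eqref{s} already prevents $X$ from being, near $s$, concentrated on a finite-dimensional subspace (a monotone real function obeying a two-sided Hölder bound of exponent $H<1$ would be differentiable almost everywhere with infinite difference quotients, a contradiction), while the concavity and regular variation of $\sigma$ should quantify how little a linear combination of coarse-scale values can predict a much finer increment. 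Concretely, I would prove $v_k\ge\sigma(\delta_k)^2/L$ either by a direct covariance estimate exploiting the concavity of $\sigma$, or by a Slepian / Sudakov–Fernique comparison of $X$ restricted to a neighbourhood of $s$ with a fractional Brownian motion of index $H$, whose conditional variances are controlled by its own strong local non-determinism (Lemma 7.1 of \cite{pitt1978local}); regular variation of $\sigma$ is precisely what would make such a comparison effective uniformly across the scales $\delta_k$.
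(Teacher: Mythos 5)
Your treatment of \eqref{global} and of the upper bound $C(s)\le L$ in \eqref{local} is correct: it is a self-contained chaining/Borel--Cantelli argument (via \Cref{p} and the zero--one law \Cref{lemma:0-1}) that essentially reproves the statements the paper obtains by citing Theorem 7.2.1 and Lemma 7.2.5 of \cite{MarcusRosen}.

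The genuine gap is the lower bound $C(s)\ge 1/L$, and it sits exactly where you place it: the conditional-variance estimate $v_k\ge\sigma(\delta_k)^2/L$ carries the entire weight of that bound and is not proved. More seriously, the route you propose for it---deriving it from the two-sided bound \eqref{s} together with concavity and regular variation of $\sigma$---cannot be completed, because those hypotheses do not exclude finite-dimensional processes. By Assouad's embedding theorem there is, for every $H\in(0,1)$, a curve $F:[0,1]\to\R^d$ with $|F(t)-F(s)|\asymp|t-s|^H$ (a quasi-helix in the sense of \cite{Kahane1981}); the rank-$d$ Gaussian process $X(t)=\langle\xi,F(t)\rangle$, with $\xi$ standard normal in $\R^d$, satisfies \eqref{s} with $\sigma(h)=h^H$, yet conditioning on $d$ suitably chosen values determines the whole path, so $v_k=0$ for all large $k$; moreover $|X(t)-X(s)|\le|\xi|\,L|t-s|^H$ forces $\limsup_{t\to s}|X(t)-X(s)|/\Psi(|t-s|)=0$ a.s., since $\Psi(u)/\sigma(u)\to\infty$. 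Your heuristic that a two-sided H\"older bound precludes degeneracy only rules out rank one, and neither a direct covariance computation nor a Slepian/Sudakov--Fernique comparison can extract a lower bound on a \emph{conditional} variance from two-sided bounds on the \emph{unconditional} increment variances alone. So the needed non-determinism must come from structure beyond the hypotheses you invoke. The paper takes a different route entirely: it obtains \eqref{local} by repeating the argument of \Cref{modcty} (i.e., Theorem 7.6.4 of \cite{MarcusRosen}) and then pins $C(s)$ into $[1/L,L]$ by the comparison Lemma 7.1.10 of \cite{MarcusRosen}, rather than through a local-non-determinism estimate for the general process $X$.
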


\begin{proof}
   The proof of \eqref{local} is essentially the same as that of \Cref{modcty} below and hence omitted. Note that $C(s)\in[\frac{1}{L},L]$, otherwise it would contradict Lemma 7.1.10 of \cite{MarcusRosen} with the choice $Y(t)=L^{\pm 2}X(t+(s'-s))$, where $L$ is as in \eqref{s}. The claim \eqref{global} follows from (\ref{s}), Theorem 7.2.1 and Lemma 7.2.5 in \cite{MarcusRosen}.
\end{proof}

\begin{proof}[Proof of \Cref{phi2}] Consider first the lower bound. Denote by $L_6$ the constant in \Cref{lg} so that $C(s)\ge1/L_6$ for all $s$. Let $\ee\in(0,1)$ be arbitrary,\footnote{In fact, one can just take $\ee=1/2$ everywhere.} and
\begin{align}
E(\delta):=\left\{(t,\omega):\ \exists s\in(0,\delta)\cap\bQ,\ \Phi(|X(t+s,\omega)-X(t,\omega)|)>\frac{(1-\ee)s}{L_6^{1/H}}\right\}.\label{delta}
\end{align}
Taking $c=L_6^{-1}$ in (\ref{psi}) yields that there is $\delta$ such that for all $0<s<\delta$,
\[
\Phi\left(\frac{\Psi(s)}{L_6}\right)\geq \frac{(1-\ee)s}{L_6^{1/H}}.
\]
Since $\Phi$ is increasing, (\ref{local}) yields that for each $t\in(0,1)$ the $t$-section $E_t(\delta)$ of $E(\delta)$ is such that $\bP(E_t(\delta))=1$. It follows from Fubini's theorem that $\bP(|E_\delta|=1)=1$. Observe that the set of intervals $[t,t+s]$ with $t\in[0,1]$ and $s$ as in (\ref{delta}) form a Vitali covering of 
\[
E:=\bigcap_{0<\delta\leq 1}E(\delta)=\bigcap_{k=1}^\infty E(1/k).
\]
By Vitali's covering theorem (e.g., Theorem 1 in \S8 of Chapter III in \cite{Natanson}), we may choose a finite subcollection of disjoint intervals $\{[t'_j,t'_j+s_j]\}$ with a total length of at least $1-\ee$. Then for a partition $\kappa$ with mesh $|\kappa|<\delta$,
\[
s_{\Phi}(\bX,\kappa)\geq \sum_{j}\Phi(|X(t'_j+s_j,\omega)-X(t'_j,\omega)|)\geq \frac{1-\ee}{L_6^{1/H}}\sum_j s_j\geq \frac{(1-\ee)^2}{L_6^{1/H}}\geq \frac{1}{L}.
\]

Let us now focus on the upper bound. For any partition $\kappa=\{t_i\}_{i=0}^n$ of $[0,T]$, let $\Delta_i=t_i-t_{i-1}$ and $\Delta_iX:=X(t_i)-X(t_{i-1})$ for $1\leq i\leq n$. Let $I_1,I_2$ be sets of $i\in\{1,\dots,n\}$ such that $|\Delta_iX|$ is respectively in $[0,L_7\Psi(\Delta_i)]$ and $[L_7\Psi(\Delta_i),\infty)$. The constant $L_7$ will be determined later. For any $c,\ee>0$ there is $\eta(c,\ee)>0$ such that for all $v\in(0,\eta(c,\ee))$, $\Phi(c\Psi(v))\leq (c^{1/H}+\ee)v$. Choose $\delta_1=\eta(L_7+\ee,\ee)$, so for any partition $\kappa$ with $|\kappa|<\delta_1$,
\begin{align}
\sum_{i\in I_1}\Phi(|\Delta_iX|)\leq\sum_{i\in I_1}\Phi(L_7\Psi(\Delta_i))\leq (L_7+\ee)^{1/H}\sum_{i\in I_1}\Delta_i\leq L.\label{i1}
\end{align}

To estimate the sum over $I_2$, let $S_{m,j}=[(j/2)e^{-m},(1+j/2)e^{-m}]$ and 
\[
V_{m,j}:=\left\{\omega\in\Omega:\sup_{s,t\in S_{m,j}}|X(t,\omega)-X(s,\omega)|\geq L_7\Psi(e^{-m-2})\right\}.
\]
Let $j_m=\lfloor 2e^m\rfloor-1$. We have
\[
\card\Lambda_m:=\card \{i\in I_2:e^{-m-1}<2\Delta_i\leq e^{-m}\}\leq 5\card\{0\leq j\leq j_m:\omega\in V_{m,j}\}=:Z_m(\omega).
\]
Denote by $L_8$ the constant in \Cref{p}. Take $L_9$ with $(1+L_8^{-1})^{L_9^2e^{-8H}}\geq e^{4+2/H}$ and $L_7=L_8L_9$, we obtain by \Cref{p} with $\delta=e^{-m}$ and $y=L_9\Psi(e^{-m-2})/\sigma(e^{-m})$ that for $m$ large enough,
 \[\bP(V_{m,j})\leq L_8(1+L_8^{-1})^{-y^2}\leq L(m+2)^{-4-2/H},\]
thus $\bE[Z_m]\leq Le^mm^{-4-2/H}$, so that by Markov's inequality and the Borel--Cantelli lemma, with probability one there is $m_1=m_1(\omega)>3$  such that for all $m>m_1(\omega),\ Z_m(\omega)\leq e^mm^{-2-2/H}$.

Since $\sigma$ is regularly varying with index $H$, using Karamata's representation we write
\[
\sigma(x)=x^H\beta(x)\exp\left(\int_1^x\frac{\alpha(u)}{u}du\right),
\]
where $\alpha(u)\to 0,\beta(x)\to C\neq 0$ as $x\to 0$. It is then easy to see that
\begin{align}
\frac{\Psi(e^{-m}m^{2/H})}{\Psi(e^{-m})}\geq \frac{\sigma(e^{-m}m^{2/H})}{\sigma(e^{-m})}\geq m\label{2rv}
\end{align}
for $m$ large enough, say $m>m_0$.

 By (\ref{global}), there exists $K(\omega)<\infty$ such that $|X(s)-X(t)|\leq K(\omega)\widetilde{\Psi}(|t-s|)$ for all $s,t\in[0,1]$ and for almost every $\omega$. Pick $m_2(\omega)=\max\{m_0, m_1(\omega)\}$ and define $\delta_2=\delta_2(\omega)=e^{-m_2(\omega)}\wedge \eta(K(\omega),\ee)$. Then for $\kappa$ with $|\kappa|<\delta_2$, each $m\geq  m_2(\omega)$, and each $i\in \Lambda_m$, there is $j$ such that $[t_{i-1},t_i]\se S_{m,j}$. Thus by (\ref{2rv}) applied on the second line,
\begin{align}
\begin{split}
    \sum_{i\in I_2}\Phi(|\Delta_iX|)&=\sum_{m\geq m_2(\omega)}\sum_{i\in I_2\cap\Lambda_m}\Phi(|\Delta_iX|)\\
&\leq \sum_{m\geq m_2(\omega)}Z_m(\omega)\Phi(K(\omega)\widetilde{\Psi}(e^{-m}))\\
&\leq \sum_{m\geq m_2(\omega)}e^mm^{-2-2/H}\Phi(K(\omega)\Psi(e^{-m}m^{2/H}))\\
&\leq \sum_{m\geq m_2(\omega)}m^{-2}(K(\omega)^{1/H}+\ee)\leq L.
\end{split}\label{i3}
\end{align}
Combining (\ref{i1}) and (\ref{i3}) yields that with probability one, for any partition $\kappa$ with $|\kappa|<\delta_1\wedge\delta_2$,
\[
s_\Phi(X,\kappa)\leq \sum_{j=1}^2\sum_{i\in I_j}\Phi(|\Delta_iX|)\leq L,
\]
completing the proof of the upper bound. Lastly, we note that the final assertion involving the function $\Theta$ is obvious.
\end{proof}

Finally, we remark that the upper bound part of \Cref{t} may also follow from Theorem 1.3 of \cite{basse2016phi} along with regular variation techniques in \cite{bingham1989regular}. On the other hand, \Cref{t} is stronger in the sense that it characterizes the critical $\Phi$ for which the $\Phi$-variation is non-trivial.

\subsection{Moduli of continuity and Hausdorff dimension}\label{s2}

We will frequently use the following zero-one law on the moduli of continuity for Gaussian processes.

\begin{lemma}[Lemma 7.1.1 of \cite{MarcusRosen}]\label{lemma:0-1}
    Let $(G(t))_{t\in[0,1]}$ be a centered Gaussian process for which $d(t,s):=\bE[(G(t)-G(s))^2]$ is continuous. Let furthermore   $\omega,\rho:[0,1)\to[0,\infty)$ be continuous  functions with $\omega(0)=\rho(0)=0$, and $K\in[0,1]$ be a compact set. Then 
    $$\lim_{h\to 0}\sup_{\substack{t,s\in K\\ |t-s|<h}}\frac{G(t)-G(s)}{\omega(|t-s|)}\leq L\quad\text{ a.s. }\implies ~\lim_{h\to 0}\sup_{\substack{t,s\in K\\ |t-s|<h}}\frac{G(t)-G(s)}{\omega(|t-s|)}=L'\quad\text{ a.s. }$$
    for some $L'\geq 0$. Similarly, for each $s\in[0,1]$,
   $$\limsup_{t\in[0,1],\,t\to s}\frac{G(t)-G(s)}{\rho(|t-s|)}\leq L_s\quad\text{ a.s. }\implies ~\limsup_{t\in[0,1],\,t\to s}\frac{G(t)-G(s)}{\rho(|t-s|)}=L_s'\quad\text{ a.s. }$$
    for some $L_s'\geq 0$.
\end{lemma}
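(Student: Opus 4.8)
The plan is to show that the random variable
$\Theta:=\lim_{h\downarrow0}\sup_{t,s\in K,\,|t-s|<h}\frac{G(t)-G(s)}{\omega(|t-s|)}$
--- which is well defined as a measurable $[0,\infty]$-valued random variable, since the supremum is non-increasing in $h$ and $d$ is continuous (so a separable measurable version of $G$ exists; in our applications $G$ also has continuous paths, and necessarily $\omega>0$ on some $(0,h_0)$ under the hypothesis) --- is a.s.\ invariant under Cameron--Martin shifts of $G$ once we know $\Theta\le L$ a.s., and then to invoke the Gaussian zero--one law: a measurable functional of a Gaussian process that is a.s.\ invariant under every Cameron--Martin shift is a.s.\ constant. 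The latter is itself a consequence of Kolmogorov's $0$--$1$ law applied to a series representation $G=\sum_n\xi_n\phi_n$ with $\xi_n$ i.i.d.\ $N(0,1)$ and $\phi_n$ continuous.

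The key step, and the one I expect to be the main obstacle, is to deduce from $\Theta\le L$ a.s.\ that $\sqrt{d(t,s)}=o(\omega(|t-s|))$ uniformly as $|t-s|\to0$ (this is where the hypothesis is genuinely used, rather than mere continuity of $d$ --- which is why the zero--one law fails when $\omega$ is only of the same order as $\sqrt d$, cf.\ Remark~\ref{zer-one remark}). I would argue by contradiction: if it failed, there would be $\delta>0$ and pairs $(t_n,s_n)\in K\times K$ with $r_n:=|t_n-s_n|\downarrow0$ and $\var{X_n}\ge\delta^2$ for $X_n:=\frac{G(t_n)-G(s_n)}{\omega(r_n)}$. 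Since $X_n$ is one of the terms in the supremum defining $\Theta$ for every $h>r_n$, and $\sup\{X_n:r_n<h\}$ decreases to $\limsup_n X_n$ as $h\downarrow0$, we get $\Theta\ge\limsup_n X_n$. A compactness argument on the covariance array of the jointly Gaussian family $(X_n)$ then produces a subsequence along which either the $X_n$ converge in $L^2$, hence a.s., to a centered Gaussian of variance at least $\delta^2$, or the $X_n$ remain $L^2$-separated; in the first case $\limsup_n X_n$ dominates a nondegenerate centered Gaussian, and in the second Sudakov minoration together with Borell's inequality gives $\limsup_n X_n=+\infty$ a.s. Either way $\bP(\Theta>L)>0$, contradicting the hypothesis.

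Granting $\sqrt{d(t,s)}=o(\omega(|t-s|))$, the remainder is routine. Any element $g$ of the Cameron--Martin space $\mathcal H$ of $G$ satisfies $|g(t)-g(s)|\le\|g\|_{\mathcal H}\sqrt{d(t,s)}=o(\omega(|t-s|))$, so $\limsup_{|t-s|\to0}\frac{|g(t)-g(s)|}{\omega(|t-s|)}=0$, and hence the $\limsup$ defining $\Theta$ is unchanged when $G$ is replaced by $G+g$; that is, $\Theta(G+g)=\Theta(G)$ a.s.\ for every $g\in\mathcal H$ (equivalently, since $|\phi_n(t)-\phi_n(s)|\le\sqrt{d(t,s)}$ for the coordinate functions, $\Theta$ is a tail functional of $(\xi_n)_{n\ge1}$). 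By the Gaussian zero--one law, $\Theta=L'$ a.s.\ for a constant $L'$, necessarily in $[0,L]$ since $0\le\Theta\le L$ a.s.

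For the local statement I would run the same three steps with $\Theta$ replaced by $\Theta_s:=\limsup_{t\to s}\frac{|G(t)-G(s)|}{\rho(|t-s|)}$ and $\omega$ by $\rho$: the bound $\Theta_s\le L_s$ a.s.\ forces $\sqrt{d(t,s)}=o(\rho(|t-s|))$ as $t\to s$ by the identical contradiction argument, now with the increments $G(t_n)-G(s)$ pinned at the base point $s$, and the shift-invariance and zero--one law steps carry over verbatim, giving $\Theta_s=L_s'$ a.s.\ with $L_s'\in[0,L_s]$.
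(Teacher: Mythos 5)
The paper offers no proof of this lemma---it is quoted directly from Marcus and Rosen (Lemma 7.1.1)---so there is nothing internal to compare against; your reconstruction is correct and is essentially the standard argument from that reference: the hypothesis forces $\sqrt{d(t,s)}=o(\omega(|t-s|))$ uniformly (resp.\ $o(\rho(|t-s|))$ at the fixed point), whence the limit functional is invariant under Cameron--Martin shifts, equivalently a tail functional of the coefficients in a series expansion, and the Gaussian zero--one law applies. One small point to tighten: in the $L^2$-separated case of your dichotomy, Sudakov minoration and Borell's inequality give $\sup_n X_n=\infty$ a.s.; to upgrade this to $\limsup_n X_n=\infty$ a.s.\ you should apply the same estimate to each tail family $\{X_n\}_{n\ge N}$ and intersect over $N$.
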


\begin{proof}[Proof of \Cref{modcty}]
(i) This follows from \Cref{MC} and Theorem 7.6.4 of \cite{MarcusRosen}, applied with $\phi(h)=h^{2H}$.

(ii) This follows from \Cref{MC2} and Theorem 7.6.4 of \cite{MarcusRosen}, applied with $\phi(h)=h^{2H}(-\log h)$.

(iii) By pathwise H\"{o}lder continuity (Proposition A.1 of \cite{schied2024weierstrass}), for each $s\in[0,1]$,
$$\limsup_{t\in[0,1],\,t\to s}\frac{|Y(t)-Y(s)|}{|t-s|^{K}}<\infty\quad\text{ a.s.},$$
and hence the random variable $Z_s$ is well-defined. It remains to show that $Z_s$ is strictly positive non-constant, and unbounded for almost every $s\in[0,1]$. 
We fix a large integer $L_{10}>0$ to be determined, and consider the set of $s\in[0,1]$ such that there exists an infinite sequence $n_k\to\infty$ such that for all $k$, $\{sb^{n_k-L_{10}}\},\{sb^{n_k-L_{10}+1}\},\dots,\{sb^{n_k}\}\in[0,1/3]$. This condition holds for almost every $s\in[0,1]$ by considering the $b$-adic decimal expansion.

Let
$$X_n=\alpha^{-n}\big(Y(b^{-n}+s)-Y(s)\big).$$
We claim that 
\begin{align}
    \limsup_{k\to\infty}\E[|X_{n_k}|^2]>0.\label{eq:5}
\end{align}
Suppose that \eqref{eq:5} holds. Since each $X_{n_k}$ is Gaussian, for each $x\geq 0$, there exists $\delta_x>0$ such that
$$\bP\bigg(\limsup_{t\in[0,1],\,t\to s}\frac{|Y(t)-Y(s)|}{|t-s|^{K}}>x\bigg)\geq \bP\Big(\limsup_{k\to\infty}|X_{n_k}|>x\Big)\geq \limsup_{k\to\infty}\bP(|X_{n_k}|>x)\geq \delta_x>0,$$
where the second inequality follows from Fatou's lemma. 
Taking $x=0$ yields $Z_s>0$ a.s., and taking $x\to\infty$ yields that $Z_s$ is non-constant and unbounded, as desired.

It remains to establish \eqref{eq:5}. Using \eqref{ww}, we write
$$X_n=\sum_{m=0}^\infty \alpha^{m-n}\big(B_H(\{b^{m-n}+b^ms\})-B_H(\{b^ms\})\big)=\sum_{m=1}^n\alpha^{-m}\big(B_H(\{b^{-m}+b^{n-m}s\})-B_H(\{b^{n-m}s\})\big).$$
By Minkowski's inequality and since $\alpha b^H>1$, we have
\begin{align}
    \bn{\sum_{m=L_{10}+1}^{n} \alpha^{-m}\big(B_H(\{b^{-m}+b^{n-m}s\})-B_H(\{b^{n-m}s\})\big)}_2\leq \sum_{m=L_{10}+1}^{n} \alpha^{-m}b^{-mH}\leq L (\alpha b^H)^{-L_{10}}.\label{eq:m1}
\end{align}
On the other hand, for $n=n_k$, we write
\begin{align*}
    &\hspace{0.5cm}\sum_{m=1}^{L_{10}}\alpha^{-m}\big(B_H(\{b^{-m}+b^{n_k-m}s\})-B_H(\{b^{n_k-m}s\})\big)\\
    &=\sum_{m=1}^{L_{10}}\alpha^{-m}\big(B_H(b^{-m}+\{b^{n_k-m}s\})-B_H(\{b^{n_k-m}s\})\big)\\
    &=\sum_{m=1}^{L_{10}}\alpha^{-m}\big(W_H(b^{-m}+\{b^{n_k-m}s\})-W_H(\{b^{n_k-m}s\})\big)\\
    &\hspace{3cm}-\sum_{m=1}^{L_{10}}\alpha^{-m}\big(\kappa(b^{-m}+\{b^{n_k-m}s\})-\kappa(\{b^{n_k-m}s\})\big)W_H(1)\\
    &=:\int_0^1 \bigg(\psi_{n_k}(t)-\sum_{m=1}^{L_{10}}\alpha^{-m}\big(\kappa(b^{-m}+\{b^{n_k-m}s\})-\kappa(\{b^{n_k-m}s\})\big)\bigg) dW_H(t).
\end{align*}
Since $\kappa$ is strictly increasing, there is a constant $\delta>0$ independent of $L_{10}$ such that 
$$\sum_{m=1}^{L_{10}}\alpha^{-m}\big(\kappa(b^{-m}+\{b^{n_k-m}s\})-\kappa(\{b^{n_k-m}s\})\big)>\delta.$$
Moreover, by construction, each $\psi_{n_k}$ is supported on $[0,5/6]$. By the strong local non-determinism of $W_H$, we have 
$$\bn{\int_0^1 \bigg(\psi_{n_k}(t)-\sum_{m=1}^{L_{10}}\alpha^{-m}\big(\kappa(b^{-m}+\{b^{n_k-m}s\})-\kappa(\{b^{n_k-m}s\})\big)\bigg) dW_H(t)}_2\geq \delta$$
for some $\delta>0$. Altogether, by Minkowski's inequality and  \eqref{eq:m1}, we conclude that
\begin{align*}
    \n{X_{n_k}}_2&\geq \delta-L((\alpha b^H)^{-L_{10}}+\alpha^{L_{10}}).
\end{align*}
Since $\delta,L$ do not depend on $L_{10}$, picking $L_{10}$ large enough yields \eqref{eq:5}, completing the proof.
\end{proof}

Suppose that $H\leq K$. In this case, 
\Cref{modcty} is an immediate consequence of covariance estimates from Section \ref{c}. On the other hand, covariance estimates do not suffice for establishing Theorem \ref{unicty}. The following new strategy will be needed: we consider a large $n$ and compare the Wiener--Weierstrass  process $\bY$ as defined in (\ref{ww}) with a new process \begin{align}X^{(n)}(t):=\sum_{m=0}^{n-1}\alpha^mW_H(\{b^mt\}).\label{Xn}\end{align}One easily sees that by the uniform modulus of continuity of $\bW_H$, the sum converges as $n\to\infty$ if $H<K$. Hence, when studying the limit behavior of the process $\bY$, it suffices to consider $\bX^{(n)}$ because their difference is of a small scale. A few changes will be needed in the critical case $H=K$ where one can only truncate the series but cannot replace the bridge $\bB_H$ by the fractional Brownian motion $\bW_H$.

The following technical lemma establishes the uniform modulus of continuity of $\bX^{(n)}$. Note that when approximating $\bY$ with $\bX^{(n)}$, the number $n$ depends on the scale we look at, which motivates the choice of $N_n$ below.

\begin{lemma}
\label{xnuni}
If $H<K$ we define $\bX^{(n)}$ as in (\ref{Xn}), and $\ee_n=\alpha^n,\ N_n=nH,\ \delta_n=b^{-nH}$.\footnote{Without loss of generality, we assume $N_n$ is always an integer, by using instead $\lfloor nH\rfloor$ or $\lceil nH\rceil$. } Then we have
\begin{align}
\sum_{n=1}^\infty\bP\Bigg(\sup_{\substack{t,s\in[0,1]\\ |t-s|<\ee_n/\delta_n}}\big|X^{(N_n)}(t)-X^{(N_n)}(s)\big|\leq \frac{1}{L}\left(\frac{\ee_n}{\delta_n}\right)^{H}\sqrt{-\log (\ee_n/\delta_n)}\Bigg)<\infty.\label{haha}\end{align}
\end{lemma}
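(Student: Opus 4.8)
Throughout write $h_n:=\ee_n/\delta_n=(\alpha b^H)^n$; since $H<K\le-\log_b\alpha$ we have $\alpha b^H<1$, so $h_n\downarrow0$ geometrically and $\log(1/h_n)\asymp n$. The plan is to show that the $n$-th term of the series in \eqref{haha} is at most $h_n^{c}$ for some $c=c(\alpha,b,H)>0$, whence summability is immediate. First I would reduce the supremum to a finite maximum: with $t_j:=jh_n/4$ for $0\le j\le J_n:=\lceil 4/h_n\rceil$ and $\Delta_j:=X^{(N_n)}(t_{j+1})-X^{(N_n)}(t_j)$ (so $|t_{j+1}-t_j|<h_n$), the supremum in \eqref{haha} is bounded below by $\max_j|\Delta_j|$. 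Next I would discard the high-frequency part: set $m_\dagger:=\min\{N_n,\lfloor\log_b(1/h_n)\rfloor\}$ and write $\Delta_j=\Delta_j^{\mathrm{lo}}+\Delta_j^{\mathrm{hi}}$, where $\Delta_j^{\mathrm{lo}}$ retains the summands $m<m_\dagger$ in \eqref{Xn}; bounding each of the remaining (possibly wrapping) summands crudely by $\sup_{[0,1]}|W_H|$ gives $\sup_j|\Delta_j^{\mathrm{hi}}|\le L\alpha^{m_\dagger}\sup_{[0,1]}|W_H|$, and a short computation shows $\alpha^{m_\dagger}/h_n^H$ decays geometrically in $n$, so by the Gaussian tail of $\sup_{[0,1]}|W_H|$ the event $\{\sup_j|\Delta_j^{\mathrm{hi}}|>\tfrac1{2L}h_n^H\sqrt{\log(1/h_n)}\}$ has summable probability. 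It therefore suffices to estimate $\bP\big(\max_j|\Delta_j^{\mathrm{lo}}|\le\tfrac1{L}h_n^H\sqrt{\log(1/h_n)}\big)$.

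I would then restrict to ``good'' indices $j$, meaning those with $\lfloor b^mt_j\rfloor=\lfloor b^mt_{j+1}\rfloor$ for all $m<m_\dagger$, so that none of the first $m_\dagger$ terms of $X^{(N_n)}$ wraps around over $[t_j,t_{j+1}]$. Since $b^mh_n<1$ for $m<m_\dagger$, each such $m$ excludes only $O(b^m)$ indices, hence $O(b^{m_\dagger})=O(1/h_n)$ indices are bad, and for $J_n$ a large enough multiple of $1/h_n$ at least $1/h_n$ indices remain good. For good $j$, exactly as in the proof of \Cref{MC}, $\Delta_j^{\mathrm{lo}}=\int_0^\infty M_-^H(\psi_j)\,dW_{1/2}$ with $\psi_j=\sum_{m<m_\dagger}\alpha^m\bone_{I_{j,m}}$, where the $I_{j,m}$ are genuine intervals (no sign changes), $|I_{j,m}|=b^m|I_{j,0}|$, and $I_{j,0}=[t_j,t_{j+1}]$; Minkowski's inequality gives $\|\Delta_j^{\mathrm{lo}}\|_2\le h_n^H\sum_{m\ge0}(\alpha b^H)^m\le Lh_n^H$.

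The hard part will be the matching non-degeneracy bound
\[
\|\Delta_j^{\mathrm{lo}}-\Delta_{j'}^{\mathrm{lo}}\|_2\ \ge\ \tfrac1L\, h_n^H\qquad\text{for all distinct good }j,j',
\]
which is a form of strong local non-determinism for $X^{(N_n)}$ at the single scale $h_n$, and which substitutes here for the full strong local non-determinism of the fractional Wiener--Weierstrass bridge that is only conjectured (cf.\ the open problems in \Cref{main section}). I expect this to be the main obstacle, and to require the fractional-integral representation of \Cref{sec:fractional int} together with a careful choice of the base points $t_j$ so as to avoid degenerate configurations among the points $\{b^mt_j\}$, $m<m_\dagger$. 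Concretely, for $0<H\le1/2$ I would use $\|\Delta_j^{\mathrm{lo}}-\Delta_{j'}^{\mathrm{lo}}\|_2^2=\|M_-^H(\psi_j-\psi_{j'})\|_{L^2}^2\ge\tfrac1L\|\psi_j-\psi_{j'}\|_{L^{1/H}}^2$ (\Cref{lemma:Hardy--Littlewood inequality}), and bound $\|\psi_j-\psi_{j'}\|_{L^{1/H}}$ from below by $\tfrac1L h_n^H$ by exploiting that $I_{j,0}$ and $I_{j',0}$ are disjoint and that, on a portion of $I_{j,0}$ of length $\gtrsim h_n$, $\psi_j-\psi_{j'}$ is bounded below by a positive constant (using, when $\alpha$ is not small, the positivity estimates behind \Cref{l1}--\Cref{l2} and \Cref{hls} to absorb the lower-order bumps carried by the two profile functions). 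For $1/2\le H<1$ I would instead condition on the values of $W_H$ at all the finitely many points $\{b^mt_i\}$, $i\in\{j,j',j+1,j'+1\}$, $0\le m<m_\dagger$, other than $t_{j+1}$, so that $\|\Delta_j^{\mathrm{lo}}-\Delta_{j'}^{\mathrm{lo}}\|_2^2\ge\var{W_H(t_{j+1})\mid\cdots}$, and invoke the strong local non-determinism of $W_H$ (Lemma 7.1 of \cite{pitt1978local}); here one must verify that the good-block property and the spacing of the $t_i$ keep $t_{j+1}$ at distance $\ge\tfrac1L h_n$ from every conditioning point, which is where the careful choice of base points enters.

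Granting this estimate, the family $\{\Delta_j^{\mathrm{lo}}:j\text{ good}\}$ consists of at least $1/h_n$ centered Gaussians that are $\tfrac1L h_n^H$-separated in $L^2$ and have variances at most $Lh_n^{2H}$, so Sudakov's minoration gives $\bE[\max_j\Delta_j^{\mathrm{lo}}]\ge\tfrac1L h_n^H\sqrt{\log(1/h_n)}$ and the Borell--TIS inequality then yields
\[
\bP\Big(\max_j|\Delta_j^{\mathrm{lo}}|\le\tfrac1{L}h_n^H\sqrt{\log(1/h_n)}\Big)\ \le\ \exp\big(-c\log(1/h_n)\big)=h_n^{c}
\]
once the constant $L$ in \eqref{haha} is taken large enough relative to the Sudakov constant. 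Combining this with the summable contribution from $\Delta_j^{\mathrm{hi}}$ and summing the geometric series $\sum_n h_n^{c}$ completes the proof; the only nonroutine step is the scale-$h_n$ local non-determinism bound in the third paragraph.
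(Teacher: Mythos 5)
Your overall architecture --- discretize, establish a pairwise $L^2$-separation of the increments, apply Sudakov minoration, then Gaussian concentration --- is exactly the paper's, and your grid reduction, high-frequency truncation, and final Sudakov/Borell--TIS step are sound. The gap is precisely where you predict it: the separation bound $\nn{\Delta_j^{\mathrm{lo}}-\Delta_{j'}^{\mathrm{lo}}}_2\ge h_n^H/L$ is asserted but not proved, and neither of your sketched routes closes it as stated. For $H\le 1/2$, the claim that $\psi_j-\psi_{j'}$ is bounded below by a positive constant on a portion of $I_{j,0}$ fails once $\alpha\ge 1/2$: on $I_{j,0}$ one only gets $\psi_j-\psi_{j'}\ge 1-\sum_{m\ge1}\alpha^m=(1-2\alpha)/(1-\alpha)$, and for special $\alpha$ (e.g.\ $1-\alpha-\alpha^2=0$) the difference can vanish identically on large subsets of $I_{j,0}$. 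One is then forced into a pigeonhole argument on the attainable values of $\psi_j-\psi_{j'}$ in the spirit of \Cref{l3} --- a genuine piece of work, not an absorption of lower-order bumps via \Cref{l1}--\Cref{l2}, which apply only to nonnegative profiles. For $H\ge1/2$, conditioning on $W_H$ at all points other than $t_{j+1}$ requires $t_{j+1}$ to be at distance $\ge h_n/L$ from every point $\{b^mt_i\}$ with $i\in\{j,j+1,j',j'+1\}$; this couples $j$ and $j'$, and a direct count shows each $j$ can have of order $b^{m_\dagger}\asymp 1/h_n$ forbidden partners $j'$, i.e.\ the bad-pair graph may have degree comparable to the total number of good indices, so naive pruning leaves only $O(1)$ surviving indices --- far too few for Sudakov to produce the $\sqrt{\log(1/h_n)}$ factor.

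The paper avoids both problems with a different geometric setup: all test intervals are placed inside the fixed window $[1/b,2/3]$, have length $\ee_n/(L_{11}\delta_n)$, and are mutually separated by a tunable large multiple $L_{12}$ of that length. The separation $\nn{\Delta^{\bX}_{n,k}-\Delta^{\bX}_{n,\ell}}_2\ge \frac1L\big(\ee_n/(L_{11}\delta_n)\big)^{H}$ is then obtained in one stroke, uniformly in $H\in(0,1)$ and with no case analysis, by writing the difference as the increment of $W_H$ over the leftmost interval plus a stochastic integral supported to its right and invoking the strong local non-determinism of $W_H$ (Lemma 7.1 of \cite{pitt1978local}) at the single leftmost endpoint. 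To salvage your uniform-grid version you would need either a ``difference'' analogue of \Cref{hls}/\Cref{l3} for $\psi_j-\psi_{j'}$, or to abandon the grid and adopt the paper's well-separated-window construction for the base points.
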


\begin{proof} The plan is to apply the Sudakov minoration (e.g. Lemma 2.10.2 of \cite{talagrand2022upper}) to the increments of $\bX^{(N_n)}$ on well-spaced subintervals of $[0,1]$. To achieve this goal, we first need to estimate the covariances. We consider two large constants $L_{11},L_{12}$ to be determined later, and for each $n$ consider subintervals of length $\ee_n/(L_{11}\delta_n)$ of $[1/b,2/3]$ that are at least $L_{12}\ee_n/(2L_{11}\delta_n)$ apart. We consider the collection of intervals $$\left\{\left[\frac{1}{b}+\frac{jL_{12}\ee_n}{L_{11}\delta_n},\frac{1}{b}+\frac{(jL_{12}+1)\ee_n}{L_{11}\delta_n}\right]\right\}_{1\leq j\leq j_n},$$
where $j_n$ is as large as possible such that the intervals lie in $[1/b,2/3]$. One easily sees that $j_n\geq \delta_n/(L\ee_n)$. Let us define for $1\leq j\leq j_n$,
\begin{align*}&\Delta^\bX_{n,j}:=X^{(N_n)}\left(\frac{1}{b}+\frac{(jL_{12}+1)\ee_n}{L_{11}\delta_n}\right)-X^{(N_n)}\left(\frac{1}{b}+\frac{jL_{12}\ee_n}{L_{11}\delta_n}\right),\\
&\Delta^\bW_{n,m,j}:=W_H\left(b^m\left(\frac{1}{b}+\frac{(jL_{12}+1)\ee_n}{L_{11}\delta_n}\right)\right)-W_H\left(b^m\left(\frac{1}{b}+\frac{jL_{12}\ee_n}{L_{11}\delta_n}\right)\right).\end{align*}
We then have for $1\leq k<\ell\leq j_n$, for some measurable function $f_n$,
\begin{align*}
    &\hspace{0.5cm}\bE\left[\left|\Delta^\bX_{n,k}-\Delta^\bX_{n,\ell}\right|^2\right]\\
    &=\bE\bigg[\bigg|\sum_{m=0}^{N_n-1}\alpha^m(\Delta^\bW_{n,m,k}-\Delta^\bW_{n,m,\ell})\bigg|^2\bigg]\\
    &=\bE\bigg[\bigg|W_H\left(\frac{1}{b}+\frac{kL_{12}\ee_n}{L_{11}\delta_n}\right)-W_H\left(\frac{1}{b}+\frac{(kL_{12}+1)\ee_n}{L_{11}\delta_n}\right)+\int_{\frac{1}{b}+\frac{(kL_{12}+1)\ee_n}{L_{11}\delta_n}}^\infty f_n(x)\,dW_H(x)\bigg|^2\bigg]\\
    &\geq \frac{1}{L}\left(\frac{\ee_n}{L_{11}\delta_n}\right)^{2H},
\end{align*}where the last step follows from the strong local non-determinism applied to $\bW_H$ (Lemma 7.1 of \cite{pitt1978local}). Thus by the Sudakov minoration (e.g., Lemma 2.10.2 of \cite{talagrand2022upper}),
\[ \bE\bigg[\sup_{1\leq j\leq j_n}\Delta^\bX_{n,j}\bigg]\geq\frac{1}{L}\Big(\frac{L_{12}\ee_n}{L_{11}\delta_n}\Big)^{2H}\log(j_n)\geq \frac{1}{L}\Big(\frac{\ee_n}{\delta_n}\Big)^{H}\sqrt{-\log (\ee_n/\delta_n)}. \]
Moreover, for any $j$,
\[\n{\Delta^\bX_{n,j}}_2^2\leq L\Big(\frac{\ee_n}{\delta_n}\Big)^{2H},\]
 which follows from the negativity of covariances if $0< H<1/2$ and Lemma \ref{lemma:Hardy--Littlewood inequality} if $1/2\leq H<1$. By the Gaussian concentration inequality (Theorem 5.4.3 and Corollary 5.4.5 of \cite{MarcusRosen}),
 \begin{align*}
     \bP\Bigg(\bigg|\sup_{1\leq j\leq j_n}\Delta^\bX_{n,j}-\bE\bigg[\sup_{1\leq j\leq j_n}\Delta^\bX_{n,j}\bigg]\bigg|\geq u\Bigg)\leq L\exp\left(\frac{-u^2}{L(\ee_n/\delta_n)^{2H}}\right).
 \end{align*}
 Taking $u=L(\ee_n/\delta_n)^H(-\log (\ee_n/\delta_n))^{1/4}\ll (\ee_n/\delta_n)^H\sqrt{-\log (\ee_n/\delta_n)}$ shows that\[\sum_{n=1}^\infty\bP\left(\sup_{1\leq j\leq j_n}\Delta^\bX_{n,j}\leq \frac{1}{L}(\frac{\ee_n}{\delta_n})^{H}\sqrt{-\log (\ee_n/\delta_n)}\right)<\infty.\] Then (2) holds from the trivial bound
 \[\sup_{\substack{t,s\in[0,1]\\ |t-s|<\ee_n/\delta_n}}\left(X^{(N_n)}(t)-X^{(N_n)}(s)\right)\geq\sup_{1\leq j\leq j_n}\Delta^\bX_{n,j}.\]
This finishes the proof.
\end{proof}

\begin{proof}[Proof of \Cref{unicty}]
(i) Consider the sequence of numbers $\ee_n,\delta_n,N_n$ as in \Cref{xnuni} and denote by $\phi(n)=\ee_n^H\sqrt{-\log\ee_n}$, and we have by \Cref{xnuni} (using here $H<K$),
\[\sum_{n=1}^\infty\bP\Bigg(\sup_{\substack{t,s\in[0,1]\\ |t-s|<\ee_n/\delta_n}}\big|X^{(N_n)}(t)-X^{(N_n)}(s)\big|\leq \frac{1}{L}\left(\frac{\ee_n}{\delta_n}\right)^{H}\sqrt{-\log \ee_n}\Bigg)<\infty.\]
Note that $\bX^{(N_n)}$ is $H$-self-similar, so that
\begin{align}\sum_{n=1}^\infty\bP\Bigg(\sup_{\substack{t,s\in[0,\delta_n]\\ |t-s|<\ee_n}}\big|X^{(N_n)}(t)-X^{(N_n)}(s)\big|\leq \frac{1}{L}\phi(n)\Bigg)<\infty.\label{ts}\end{align}
Due to the continuity of $\bW_H$, there is an a.s. finite random variable $K=K(\omega)$ such that
\[\max\bigg\{W_H(1), \sup_{t\in[0,1]}B_H(t)\bigg\}<K.\]
Consider a number $L_{13}>0$ and the event $E_1:=\{\max\{W_H(1), \sup_{t\in[0,1]}B_H(t)\}<L_{13}\}$, which has positive probability for all $L_{13}$. Thus on this event, by the triangle inequality, for $t<\delta_n$,
\begin{align}\begin{split}
    |Y(t)-X^{(N_n)}(t)|
&\leq \sum_{k=N_n}^\infty \alpha^kB_H(\{b^kt\})+\sum_{k=0}^{N_n-1}\alpha^k|W_H(b^kt)-B_H(b^kt)|\\
&\leq LL_{13}\alpha^{N_n}+L_{13}\sum_{k=0}^{N_n-1}\alpha^k(b^k\delta_n)^\tau=o(\phi(n)).
\end{split}\label{yxn}\end{align}
It follows from (\ref{ts}), (\ref{yxn}), the triangle inequality, and the Borel--Cantelli lemma that on the event $E_1$, eventually almost surely
\[\sup_{\substack{t,s\in[0,\delta_n]\\ |t-s|<\ee_n}}|Y(t)-Y(s)|> \frac{1}{L}\phi(n).\]
We then have
\[\bP\Bigg(\lim_{n\to\infty}\sup_{\substack{t,s\in[0,\delta_n]\\ |t-s|<\ee_n}}|Y(t)-Y(s)|> \frac{1}{L}\phi(n)\Bigg)\geq \bP(E_1)>0.\]Since $\ee_n$ forms a geometric sequence, we can extend the limit to a continuous one. That is,
\begin{align}
    \bP\Bigg(\lim_{h\to 0}\sup_{\substack{t,s\in[0,1]\\ |t-s|<h}}\frac{|Y(t)-Y(s)|}{h^H\sqrt{-\log h}}> \frac{1}{L}\Bigg)\geq \bP(E_1).\label{eq:3}
\end{align}
Recall from \Cref{MC} that $|t-s|^H/L\leq \n{Y(t)-Y(s)}_2\leq L|t-s|^H$. Theorem 7.2.1 of \cite{MarcusRosen} then implies that 
\begin{align}
    \bP\Bigg(\lim_{n\to\infty}\sup_{\substack{t,s\in[0,1]\\ |t-s|<h}}\frac{Y(t)-Y(s)}{h^H\sqrt{-\log h}}\leq L\Bigg)=1.\label{eq:4}
\end{align}
Lemma \ref{lemma:0-1} together with \eqref{eq:3} and \eqref{eq:4} then complete the proof. \\

(ii) Consider the scale $\ee_n=\alpha^n=b^{-nH}$ and $N_n=nH$. We consider three large numbers $L_{14},L_{15},L_{16}$ to be determined later,\footnote{We will later see that $L_{15}$ depends on $L_{16}$ and $L_{14}$ depends on $L_{15},L_{16}$.} and for each $n$ consider a number $j_n$ and for $1\leq j\leq j_n$ define
\[s_{n,j}:=b^{-n(H-L_{14}^{-1})-1}+\frac{jL_{16}\ee_n}{L_{15}}\quad\text{ and }\quad  t_{n,j}:=b^{-n(H-L_{14}^{-1})-1}+\frac{(jL_{16}+1)\ee_n}{L_{15}}=s_{n,j}+\frac{\ee_n}{L_{15}}.\]We pick  the largest integer $j_n$ such that $t_{n,j_n}<b^{-n(H-L_{14}^{-1})}$. Observe that \[j_n\geq \frac{1}{L}b^{-n(H-L_{14}^{-1})+nH}=\frac{1}{L}b^{L_{14}^{-1}n}.\]
For intuition, the reader may compare this with the proof of \Cref{xnuni}, the intervals $[s_j,t_j]$ are now ``well-spaced" subintervals of $[b^{-n(H-L_{14}^{-1})-1},b^{-n(H-L_{14}^{-1})}]$.

Consider the truncated Wiener--Weierstrass  process \[Y^{(n)}(t):=\sum_{m=0}^{n-1}\alpha^mB_H(\{b^mt\}).\]Define for $1\leq j\leq j_n,$
\begin{align}\Delta^\bY_{n,j}:=Y^{(N_n)}(t_{n,j})-Y^{(N_n)}(s_{n,j})\quad\text{ and }\quad \Delta^\bB_{n,m,j}:=B_H(\{b^mt_{n,j}\})-B_H(\{b^ms_{n,j}\}).\label{yb}\end{align}
Let $M_n:=n(H-L_{14}^{-1})$, so that for $0\leq m\leq M_n$, $\Delta^\bB_{n,m,j}=B_H(b^mt_{n,j})-B_H(b^ms_{n,j})$.

By definition and Minkowski's inequality, for $1\leq k<\ell\leq j_n$,
\begin{align}\begin{split}
    \n{\Delta^\bY_{n,k}-\Delta^\bY_{n,\ell}}_2&=\bn{\sum_{m=0}^{N_n-1}\alpha^m(\Delta^\bB_{n,m,k}-\Delta^\bB_{n,m,\ell})}_2\\
&\geq \bn{\sum_{m=0}^{M_n-1}\alpha^m(\Delta^\bB_{n,m,k}-\Delta^\bB_{n,m,\ell})}_2-\bn{\sum_{m=M_n}^{N_n-1}\alpha^m\Delta^\bB_{n,m,k}}_2-\bn{\sum_{m=M_n}^{N_n-1}\alpha^m\Delta^\bB_{n,m,\ell}}_2.
\end{split}\label{7}\end{align}
Our first goal is to prove the lower bound
\[\bE\big[|\Delta^\bY_{n,k}-\Delta^\bY_{n,\ell}|^2\big]\geq \frac{1}{L}\ee_n^{2H}(-\log\ee_n),\]
where $L$ may depend on $L_{14},L_{15},L_{16}$. We first bound $\nn{\sum_{m=M_n}^{N_n-1}\alpha^m\Delta^\bB_{n,m,k}}_2$ from above. Consider the sets  $$T_n:=\{m\in[M_n,N_n-1]\cap\bZ:\ \{b^ms_{n,j}\}\leq\{b^mt_{n,j}\}\}\quad\text{ and }\quad S_n:=[M_n,N_n-1]\cap \bZ\setminus T_n.$$Using the bridge relation \eqref{fbb} and Minkowski's inequality, we write
\begin{align*}
    &\hspace{0.5cm}\bn{\sum_{m=M_n}^{N_n-1}\alpha^m\Delta^\bB_{n,m,j}}_2\\
    &=\bn{\sum_{m=M_n}^{N_n-1}\alpha^m(W_H(\{b^mt_{n,j}\})-W_H(\{b^ms_{n,j}\})-(\kappa(\{b^mt_{n,j}\})-\kk(\{b^ms_{n,j}\}))W_H(1))}_2\\
    &\leq \bn{\sum_{m\in T_n}\alpha^m(W_H(\{b^mt_{n,j}\})-W_H(\{b^ms_{n,j}\}))}_2+\bn{\sum_{m\in T_n}\alpha^m(\kappa(\{b^mt_{n,j}\})-\kk(\{b^ms_{n,j}\}))W_H(1)}_2\\
    &\hspace{3cm}+\bn{\sum_{m\in S_n}\frac{\alpha^mb^m\ee_n}{L_{15}}(W_H(\{b^mt_{n,j}\})+W_H(1)-W_H(\{b^ms_{n,j}\}))}_2\\
    &\hspace{3cm}+\bn{\sum_{m\in S_n}\alpha^m(\kappa(\{b^mt_{n,j}\})+1-\kk(\{b^ms_{n,j}\}))W_H(1)}_2.
    \end{align*}
The second term is bounded by
\[\bn{\sum_{m=M_n}^{N_n-1}\alpha^m(\kappa(\{b^mt_{n,j}\})-\kk(\{b^ms_{n,j}\}))W_H(1)}_2\leq \sum_{m=M_n}^{N_n-1}\alpha^m\left(\frac{b^m\ee_n}{L_{15}}\right)^\tau\leq L\ee_n^H.\]
To estimate the first term, we define 
\[f_n(t):=\sum_{m\in T_n}\alpha^m\bone_{[\{b^ms_{n,j}\},\{b^mt_{n,j}\}]}(t),\]
so that 
\[\bn{\sum_{m=M_n}^{N_n-1}\alpha^m(W_H(\{b^mt_{n,j}\})-W_H(\{b^ms_{n,j}\}))}_2^2=\bE\left[\left|\int_0^\infty f_n(t)\,dW_H(t)\right|^2\right].\]
By \Cref{hls} and self-similarity of $\bW_H$,
\[\bE\left[\left|\int_0^\infty f_n(t)\,dW_H(t)\right|^2\right]\leq L(N_n-M_n)\ee_n^{2H}\leq \frac{Ln\ee_n^{2H}}{L_{14}}\]
where $L$ does not depend on $L_{14}$. Similar estimates hold for the third and fourth terms using the case $k=2$ of \Cref{hls}. We then conclude that
\begin{align}
    \n{\Delta^\bY_{n,k}-\Delta^\bY_{n,\ell}}_2\geq \bn{\sum_{m=0}^{M_n-1}\alpha^m(\Delta^\bB_{n,m,k}-\Delta^\bB_{n,m,\ell})}_2-\frac{L\sqrt{n}\ee_n^H}{L_{14}}.\label{d1}
\end{align}

Next, we bound $\nn{\sum_{m=0}^{M_n-1}\alpha^m(\Delta^\bB_{n,m,k}-\Delta^\bB_{n,m,\ell})}_2$ from below. We first define $\Delta^\bW_{n,m,k}$ similarly as in (\ref{yb}), so by a similar Minkowski's inequality argument, it suffices to give a lower bound for $$\bn{\sum_{m=0}^{M_n-1}\alpha^m(\Delta^\bW_{n,m,k}-\Delta^\bW_{n,m,\ell})}_2.$$ This is done again by an induction argument. Consider first the term with $m=0$. We compute 
\begin{align*}
    \bE[|\Delta^\bW_{n,0,k}-\Delta^\bW_{n,0,\ell}|^2]&=\bE[|\Delta^\bW_{n,0,k}|^2]+\bE[|\Delta^\bW_{n,0,\ell}|^2]-2\bE[\Delta^\bW_{n,0,k}\Delta^\bW_{n,0,\ell}]\\
    &\geq 2\left(\frac{\ee_n}{L_{15}}\right)^{2H}-2L\left(\frac{\ee_n}{L_{15}}\right)^{2}\left(\frac{L_{16}\ee_n}{L_{15}}\right)^{2H-2}=\bigg(\frac{2}{L_{15}^{2H}}-\frac{2LL_{16}^{2H-2}}{L_{15}^{2H}}\bigg)\ee_n^{2H},
\end{align*}where the last line follows from the mean-value theorem and the number $L$ does not depend on $L_{15},L_{16}$. Picking $L_{16}$ large enough we see that 
\begin{align}\bE\big[|\Delta^\bW_{n,0,k}-\Delta^\bW_{n,0,\ell}|^2\big]\geq \frac{\ee_n^{2H}}{LL_{15}^{2H}}.\label{r0}\end{align}
Consider the case $1\leq M\leq M_n-1$. By self-similarity,
\[\bE\big[|\alpha^M(\Delta^\bW_{n,M,k}-\Delta^\bW_{n,M,\ell})|^2\big]\geq \frac{\ee_n^{2H}}{LL_{15}^{2H}}.\]
 Thus we have
\begin{align}
    \begin{split}
        &\hspace{0.5cm}\bE\bigg[\Big|\sum_{m=0}^{M}\alpha^m(\Delta^\bW_{n,m,k}-\Delta^\bW_{n,m,\ell})\Big|^2\bigg]-\bE\bigg[\Big|\sum_{m=0}^{M-1}\alpha^m(\Delta^\bW_{n,m,k}-\Delta^\bW_{n,m,\ell})\Big|^2\bigg]\\
    &=\bE\left[\left|\alpha^M(\Delta^\bW_{n,M,k}-\Delta^\bW_{n,M,\ell})\right|^2\right]+2\bE\bigg[\alpha^M(\Delta^\bW_{n,M,k}-\Delta^\bW_{n,M,\ell})\Big(\sum_{m=0}^{M-1}\alpha^m(\Delta^\bW_{n,m,k}-\Delta^\bW_{n,m,\ell})\Big)\bigg]\\
    &\geq \frac{\ee_n^{2H}}{LL_{15}^{2H}}+2\alpha^M\bE\bigg[(\Delta^\bW_{n,M,k}-\Delta^\bW_{n,M,\ell})\Big(\sum_{m=0}^{M-1}\alpha^m(\Delta^\bW_{n,m,k}-\Delta^\bW_{n,m,\ell})\Big)\bigg].
    \end{split}\label{r1}
\end{align}
Consider first $0<H\leq 1/2$ where the increments of $\bW_H$ are negatively correlated, then
\begin{align}
    \begin{split}
        &\hspace{0.5cm}\bE\bigg[(\Delta^\bW_{n,M,k}-\Delta^\bW_{n,M,\ell})\Big(\sum_{m=0}^{M-1}\alpha^m(\Delta^\bW_{n,m,k}-\Delta^\bW_{n,m,\ell})\Big)\bigg]\\
    &\geq \bE\bigg[\Delta^\bW_{n,M,k}\Big(\sum_{m=0}^{M-1}\alpha^m\Delta^\bW_{n,m,k}\Big)\bigg]+\bE\bigg[\Delta^\bW_{n,M,\ell}\Big(\sum_{m=0}^{M-1}\alpha^m\Delta^\bW_{n,m,\ell}\Big)\bigg]\\
    &=\sum_{m=0}^{M-1}\alpha^m\bE\left[\Delta^\bW_{n,M,k}\Delta^\bW_{n,m,k}+\Delta^\bW_{n,M,\ell}\Delta^\bW_{n,m,\ell}\right].
    \end{split}\label{r2}
\end{align}
Observe that for $m<M$ and $1\leq j\leq j_n$,  $b^mt_{n,j}<b^Ms_{n,j}-b^M/L$, so that by mean-value theorem (recalling $t_{n,j}-s_{n,j}=\ee_n/L_{15}$)
\begin{align}|\bE[\Delta^\bW_{n,M,k}\Delta^\bW_{n,m,k}]|\leq L\Big(\frac{b^M}{L}\Big)^{2H-2}b^{m+M}\frac{\ee_n^2}{L_{15}^2},\label{r3}\end{align}
where $L$ does not depend on $L_{15}$. Using the relation $M<nH$ and $\ee_n=\alpha^n=b^{-nH}$, and combining (\ref{r1}), (\ref{r2}), and (\ref{r3}), we have for $L_{15}$ large enough,
\[\bE\bigg[\Big|\sum_{m=0}^{M}\alpha^m(\Delta^\bW_{n,m,k}-\Delta^\bW_{n,m,\ell})\Big|^2\bigg]-\bE\bigg[\Big|\sum_{m=0}^{M-1}\alpha^m(\Delta^\bW_{n,m,k}-\Delta^\bW_{n,m,\ell})\Big|^2\bigg]\geq \frac{\ee_n^{2H}}{L}.\]
By summing over the previous relation and using (\ref{r0}), we obtain
\[\bE\bigg[\Big|\sum_{m=0}^{M}\alpha^m(\Delta^\bW_{n,m,k}-\Delta^\bW_{n,m,\ell})\Big|^2\bigg]\geq \frac{M\ee_n^{2H}}{L}\geq \frac{1}{L}\ee_n^{2H}(-\log\ee_n).\]
A similar argument works for $H>1/2$. Combining with (\ref{d1}) and \eqref{7}, we obtain that for $L_{14}$ large enough,
\[\n{\Delta^\bY_{n,k}-\Delta^\bY_{n,\ell}}_2\geq \frac{1}{L}\ee_n^H\sqrt{-\log\ee_n}.\]
Therefore, by  the Sudakov minoration,
\[ \bE\bigg[\sup_{1\leq j\leq j_n}\Delta^\bY_{n,j}\bigg]\geq \frac{1}{L}\left(\frac{1}{L}\ee_n^H\sqrt{-\ee_n}\right)\sqrt{\log j_n}\geq \frac{1}{L}\ee_n^H(-\log\ee_n). \]

Our next goal is to bound $\nn{\Delta^\bY_{n,j}}_2$ from above. Using Minkowski's inequality and \Cref{MC2},
\[\n{\Delta^\bY_{n,j}}_2\leq \n{Y(t_{n,j})-Y(s_{n,j})}_2+\n{\widetilde{Y}^{(N_n)}(t_{n,j})-\widetilde{Y}^{(N_n)}(s_{n,j})}_2\leq Ln\ee_n^H+\n{\widetilde{Y}^{(N_n)}(t_{n,j})-\widetilde{Y}^{(N_n)}(s_{n,j})}_2,\]where \[\widetilde{Y}^{(N_n)}(t)=\sum_{m=N_n}^\infty \alpha^mB_H(\{b^mt\}).\]By a similar argument as in (\ref{yxn}) and Gaussian concentration,
\[\n{\widetilde{Y}^{(N_n)}(t_{n,j})-\widetilde{Y}^{(N_n)}(s_{n,j})}_2\leq L\ee_n^H,\]and hence, $\nn{\Delta^\bY_{n,j}}_2\leq Ln\ee_n^H$. By the Gaussian concentration inequality,
\begin{align*}
     \bP\Bigg(\bigg|\sup_{1\leq j\leq j_n}\Delta^\bY_{n,j}-\bE\bigg[\sup_{1\leq j\leq j_n}\Delta^\bY_{n,j}\bigg]\bigg|\geq u\Bigg)\leq L\exp\left(\frac{-u^2}{Ln\ee_n^{2H}}\right).
 \end{align*}
Choose $H'\in(\max\{2H,1\},2)$. Taking $u=\ee_n^H(-\log \ee_n)^{H'}$ and using the trivial bound
\[\sup_{\substack{t,s\in[0,1]\\ |t-s|<\ee_n}}\left(Y^{(N_n)}(t)-Y^{(N_n)}(s)\right)\geq \sup_{1\leq j\leq j_n}\Delta^\bY_{n,j}\]yield that
\[\sum_{n=1}^\infty \bP\Bigg(\sup_{\substack{t,s\in[0,1]\\ |t-s|<\ee_n}}\left(Y^{(N_n)}(t)-Y^{(N_n)}(s)\right)\leq\frac{1}{L}\ee_n^H(-\log \ee_n)\Bigg)<\infty.\]
Consider a large number $L_{17}$ and the non-trivial event $$E_2:=\left\{\sup_{0\leq t\leq 1}B_H(t)\leq L_{17}\right\}.$$Thus on this event,
\[\left|\left(Y^{(N_n)}(t)-Y^{(N_n)}(s)\right)-\left(Y(t)-Y(s)\right)\right|\leq LL_{17}\ee_n^H.\]
By the Borel--Cantelli lemma, on $E_2$ we have eventually a.s.
\[\sup_{\substack{t,s\in[0,1]\\ |t-s|<\ee_n}}|Y(t)-Y(s)|> \frac{1}{L}\ee_n^H(-\log \ee_n).\]
Since $\ee_n=\alpha^n$ forms a geometric sequence, we obtain
\begin{align}
    \bP\Bigg(\lim_{n\to\infty}\sup_{\substack{t,s\in[0,1]\\ |t-s|<h}}\frac{Y(t)-Y(s)}{h^H(-\log h)}>\frac{1}{L}\Bigg)\geq\bP(E_2)>0.\label{eq:1}
\end{align}
Recall from \Cref{MC2} that $|t-s|^H\sqrt{-\log|t-s|}/L\leq \n{Y(t)-Y(s)}_2\leq L|t-s|^H\sqrt{-\log|t-s|}$. Theorem 7.2.1 of \cite{MarcusRosen} then implies that 
\begin{align}
    \bP\Bigg(\lim_{n\to\infty}\sup_{\substack{t,s\in[0,1]\\ |t-s|<h}}\frac{Y(t)-Y(s)}{h^H(-\log h)}\leq L\Bigg)=1.\label{eq:2}
\end{align}
Lemma \ref{lemma:0-1} together with \eqref{eq:1} and \eqref{eq:2} then complete the proof. \\

(iii) This follows directly from  \Cref{modcty}(iii). 
\end{proof}

\begin{proof}[Proof of \Cref{dim}]
The upper bound of the Hausdorff dimension follows from pathwise H\"{o}lder continuity (Proposition 2.3 of \cite{falconer2004fractal} and Proposition A.1 of \cite{schied2024weierstrass}). To show the lower bound, first note that by \Cref{MC}, if $H<K$, there exists $L\in\bN$ such that
$$\bE\left[\left|\frac{Y(t)-Y(s)}{|t-s|^{H}}\right|^2\right]\geq \frac{1}{L}\quad\text{ for }\quad(s,t)\in [0,1]^2,\,|s-t|<b^{-L}.$$ Thus by Gaussianity, there exists some $a>0$ such that for all $0\leq k<b^L$,
\[
\sup_{s,t\in[kb^{-L},(k+1)b^{-L}]}\bP\left(-x\leq \frac{Y(t)-Y(s)}{|t-s|^{H}} \leq x\right)\leq a x\quad\text{ as }\quad x\to 0^+.
\]
By Theorem 2 of \cite{kono1986hausdorff}, the Hausdorff dimension of the graph $\bY$ on $[kb^{-L},(k+1)b^{-L}]$ is bounded from below by $2-H$ almost surely. By countable stability of the Hausdorff dimension (see Section 2.2 of \cite{falconer2004fractal}), we must have $\mathrm{dim}(\bY)\geq 2-H$ on $[0,1]$. The case $H=K$ is similar using \Cref{MC2} instead of \Cref{MC}.

The case $H>K$ requires more work. Recall from \eqref{eq:K} that $K=\min\{1,({-\log_b\alpha})\}=-\log_b\alpha$, so it suffices to prove for a fixed $\ee>0$ that $\mathrm{dim}(\bY)\geq 2+\log_b(\alpha)-2\ee$. Let us fix $b\in\bN$, $H\in(0,1)$, $\alpha\in (0,1)$ with $\alpha b^{2H-1}<1$, and $\ee>0$. 
 By the potential-theoretic lower bound (see Section 4.3 of \cite{falconer2004fractal}), it suffices to find a probability measure $\nu=\nu_\omega$ on the graph $G_\omega$ of $\bY$ such that
\begin{align}
    I_\omega:=\int_{G_\omega}\int_{G_\omega}\frac{d\nu(t)\,d\nu(s)}{|t-s|^\gamma}<\infty\quad\text{ a.s.}\label{eq:IO}
\end{align}  
Recall \eqref{eq:TN}. Let $N$ be a large even number such that $b^{N(1-\ee)}<b^N-2$. Define a set $S_N\subseteq [0,1]$ by its $b^{N/2}$-adic expansion, such that $x=\sum_{i=1}^\infty \xi_i b^{-iN/2}\in S_N$ if $\xi_i\in\{1,2,\dots,b^{N/2}-2\}$ for all $i$. 
It is elementary to check that $S_N\subseteq T_N$. Moreover, the set $S_N$ in \Cref{1l} is a Cantor-like self-similar set. We may define the uniform measure on $S_N$, similarly to the construction of the Cantor measure (see Example 17.1 of \cite{falconer2004fractal}). This can be done, for instance, by choosing each decimal $\xi_i$ independently and uniformly from $\{1,\dots,b^{N/2}-2\}$. Denote such a measure by $\mu_N$. It is then standard to verify that if $b^{N(1-\ee)/2}<b^{N/2}-2$, then
\begin{align}\int_{T_N}\int_{T_N}|t-s|^{\ee-1}d\mu_N(t)\,d\mu_N(s)<\infty;\label{TN}\end{align}
see, for instance, Exercise 4.11 of \cite{falconer2004fractal} for the Cantor set.

We let $\nu$ be the pushforward of $\mu_N$ under the map $t\mapsto (t,X(t,\omega))$, thus reducing \eqref{eq:IO} to proving that 
\[\bE[I_\omega]=\int_{T_N}\int_{T_N}\bE\left[\left(|t-s|^2+|Y(t)-Y(s)|^2\right)^{-\gamma/2}\right]d\mu_N(t)\,d\mu_N(s)<\infty.\] By \Cref{1l} and a similar computation as in the proof of Theorem 2(i) of \cite{kono1986hausdorff}, we obtain
\[\bE\left[\left(|t-s|^2+|Y(t)-Y(s)|^2\right)^{-\gamma/2}\right]\leq L|t-s|^{-\gamma+1+\log_b(\alpha)}\leq L|t-s|^{\ee-1}.\]
Combining with (\ref{TN}) yields $\E[I_\omega]<\infty$, and hence the proof is complete.
\end{proof}

\subsection{Distribution of the maximum location}\label{s4}

\begin{proof}[Proof of Theorem \ref{thm:maximum}]
Suppose first $H\leq K$, we prove that for all $s\in[0,1]$, $\bP(s\text{ is a local maximum})=0$. By arguing similarly as in the proof of (b) and (c) of \Cref{modcty} and applying Lemma \ref{lemma:0-1}, for $H=K$, there exists a deterministic function $C_1:[0,1]\to\bR_+$ such that for all $s\in[0,1]$, 
\begin{align*}
\bP\left(\liminf_{t\in[0,1],\,t\to s}\frac{Y(t)-Y(s)}{\sqrt{2|t-s|^{2H}\log(1/|t-s|)\log\log(1/|t-s|)}}=-C_1(s)\right)=1,
\end{align*}
and for $H<K$, there exists $C_2:[0,1]\to\bR_+$ such that
\[
\bP\left(\liminf_{t\in[0,1],\,t\to s}\frac{Y(t)-Y(s)}{\sqrt{2|t-s|^{2H}\log\log (1/|t-s|)}}=-C_2(s)\right)=1.
\]
This along with parts (b) and (c) of \Cref{modcty} proves $\bP(s\text{ is a local maximum})=0$.

For the remainder of the proof, let $H>K$. We prove that $\bP(0\text{ is a global maximum})>0$. We make an auxiliary claim that there is a Lipschitz function $\phi:[0,1]\to\bR$ such that\footnote{$\phi(t)=1-\cos(2\pi t)$ may work, but we take the shortest path.}
\begin{itemize}
\item $\phi(0)=\phi(1)=0,\phi(t)> 0$ for all $t\in(0,1)$. In particular, $\phi$ attains its minimum at $0,1$;
\item $\phi\in\mathcal H_H$ where $\mathcal H_H$ is the Cameron--Martin space of the fractional Brownian motion with parameter $H$.
\end{itemize}
Provided the claim is true, we consider a large number $L$ and let \[f(t):=L\sum_{m=0}^\infty \alpha^m\phi(\{b^mt\})\] be the fractal function associated with $L\phi$. It is easy to see that $f(t)$ attains its minima at $0,1$, and\footnote{Of course, the $L$ here may not be the same and may depend on the choice of $\phi$.} 
\begin{align}f(t)\geq L(t\wedge (1-t))^K.\label{li2001gaussian}
\end{align}
Consider the process \[\widetilde{Y}(t):=Y(t)+f(t)=\sum_{m=0}^\infty \alpha^m \widetilde{B}_H(\{b^mt\}),\] so that $Y(t)=\widetilde{Y}(t)-f(t)$. It suffices to show $\widetilde{Y}(t)$ is H\"{o}lder continuous with exponent $-\log_b(\alpha)$ and constant $<1/L$ with positive probability, because this implies $\bP(|\widetilde{Y}(t)|\leq\frac{1}{L}(t\wedge(1-t))^K)>0$, which along with (\ref{li2001gaussian}) yields $\bP(Y(t)\leq 0,\forall t\in[0,1])>0$. 

We recall in the proof of Proposition A.1 of \cite{schied2024weierstrass} and the uniform modulus of continuity of fractional Brownian bridge that there exists $\delta_1>0$ such that we have the inclusion of events
\[\left\{  \sup_{t\in[0,1]}|\widetilde{B}_H(t)|\leq\delta_1\right\}\se\left\{\widetilde{Y}(t)\text{ is }-\log_b(\alpha)\text{-H\"{o}lder with constant }\frac{1}{L}\right\}.\]
By the bridge relation and since $\widetilde{B}_H(t)=B_H(t)+L\phi(t)$, there is $\delta_2>0$ depending on $\kappa$ such that
\[\left\{  \sup_{t\in[0,1]}|W_H(t)+L\phi(t)|\leq\delta_2\right\}\se\left\{  \sup_{t\in[0,1]}|B_H(t)+L\phi(t)|\leq\delta_1\right\}\se\left\{  \sup_{t\in[0,1]}|\widetilde{B}_H(t)|\leq\delta_1\right\}.\]
Since $\phi\in\mathcal H_H$, so does $-L\phi$. This implies the probability of the event on the left-hand side is positive by Theorem 2 of \cite{kuelbs1994gaussian} (or by the Cameron--Martin Theorem for the fractional Brownian motion). Combining the above gives $$\bP\big(\widetilde{Y}(t)\text{ is }-\log_b(\alpha)\text{-H\"{o}lder continuous with constant }1/L\big)>0.$$

We now prove the auxiliary claim. The function $\phi$ will arise from $\phi_1$ of the form
\[\phi_1(t):=\int_0^t(t-s)^{H-1/2}\psi_1(s)\,ds\]
where $\psi_1(s)=s(1-s)$. It is easy to see that there is $T>0$ with $\phi_1(0)=\phi_1(T)=0$ and $\phi_1$ is positive and Lipschitz on $[0,T]$. Define $\psi(s):=\psi_1(Ts)$ and \[\phi(t):=\int_0^t(t-s)^{H-1/2}\psi(s)\,ds,\]by a change of variable we obtain the first item of the claim; that $\phi\in\mathcal H_H$ follows from Theorem 5.4 of \cite{picard2011representation} and the fact that $\psi$ is square-integrable.
\end{proof}

 \parskip-0.5em\renewcommand{\baselinestretch}{0.9}\normalsize
\bibliography{CTbook}{}
\bibliographystyle{plain}

\end{document}